\documentclass[a4paper,reqno]{amsart}
\usepackage{amsmath,amsthm,enumitem}
\usepackage{amssymb,stmaryrd,bbm}
\usepackage{hyperref,aliascnt}
\usepackage{tikz}\usetikzlibrary{cd,math}
\usepackage{microtype}
\usepackage{fullpage}
\allowdisplaybreaks

\numberwithin{equation}{section}
\theoremstyle{plain}
\newaliascnt{thm}{equation}
\newaliascnt{lem}{equation}
\newaliascnt{cor}{equation}
\newaliascnt{prop}{equation}
\newaliascnt{df}{equation}
\newaliascnt{rem}{equation}
\newaliascnt{eg}{equation}
\newaliascnt{clm}{equation}
\newtheorem{thm}[thm]{Theorem}
\newtheorem{lem}[lem]{Lemma}
\newtheorem{cor}[cor]{Corollary}
\newtheorem{prop}[prop]{Proposition}
\newtheorem{clm}[clm]{Claim}
\newtheorem*{clm*}{Claim}
\theoremstyle{definition}
\newtheorem{df}[df]{Definition}
\theoremstyle{remark}
\newtheorem{rem}[rem]{Remark}
\newtheorem{eg}[eg]{Example}
\aliascntresetthe{thm}
\aliascntresetthe{lem}
\aliascntresetthe{cor}
\aliascntresetthe{prop}
\aliascntresetthe{df}
\aliascntresetthe{rem}
\aliascntresetthe{eg}
\aliascntresetthe{clm}

\newcommand{\bC}{\mathbb{C}}

\newcommand{\bF}{\mathbb{F}}

\newcommand{\bM}{\mathbb{M}}

\newcommand{\bR}{\mathbb{R}}

\newcommand{\bZ}{\mathbb{Z}}

\newcommand{\cB}{\mathcal{B}}
\newcommand{\cC}{\mathcal{C}}

\newcommand{\cF}{\mathcal{F}}
\newcommand{\cG}{\mathcal{G}}
\newcommand{\cH}{\mathcal{H}}

\newcommand{\cK}{\mathcal{K}}
\newcommand{\cL}{\mathcal{L}}
\newcommand{\cM}{\mathcal{M}}
\newcommand{\cN}{\mathcal{N}}
\newcommand{\cO}{\mathcal{O}}

\newcommand{\cR}{\mathcal{R}}

\newcommand{\cU}{\mathcal{U}}
\newcommand{\cV}{\mathcal{V}}

\newcommand{\cZ}{\mathcal{Z}}

\newcommand{\fu}{\mathfrak{u}}

\newcommand{\qa}{a_q}
\newcommand{\qc}{c_q}
\providecommand{\xto}[1]{\xrightarrow{#1}}
\providecommand{\motimes}{\mathbin{\underset{\mathrm{max}}{\otimes}}}

\newcommand{\wt}{\widetilde}
\newcommand{\wc}[1]{{#1}^{\vee}}
\DeclareMathOperator{\id}{id}
\DeclareMathOperator{\ev}{ev}
\DeclareMathOperator{\pr}{pr}
\DeclareMathOperator{\tr}{tr}

\DeclareMathOperator{\Ker}{Ker}
\DeclareMathOperator{\Hom}{Hom}

\DeclareMathOperator{\Aut}{Aut}
\DeclareMathOperator{\Rep}{Rep_f}
\DeclareMathOperator{\Irr}{Irr_f}
\DeclareMathOperator{\Hilb}{Hilb_f}
\DeclareMathOperator{\K}{K} 
\DeclareMathOperator{\Cor}{Corr} 
\DeclareMathOperator{\SU}{SU}
\DeclareMathOperator{\SO}{SO}
\DeclareMathOperator{\rU}{U}
\DeclareMathOperator{\rO}{O}
\DeclareMathOperator{\Qut}{Qut}
\DeclareMathOperator{\Ad}{Ad} 

\DeclareMathOperator{\op}{op} 
\DeclareMathOperator{\cspan}{\overline{span}}

\newcommand{\barrtimes}{\mathbin{\bar{\rtimes}}}
\newcommand{\barltimes}{\mathbin{\bar{\ltimes}}}
\newcommand{\barotimes}{\mathbin{\bar{\otimes}}}

\newcommand{\bra}{\langle}
\newcommand{\ket}{\rangle}

\begin{document}
\title{Classifiable C*-algebras have quantum symmetries}
\author{Kan Kitamura}
\address{Department of Mathematics, College of Science, Rikkyo University, 3-34-1, Nishi-Ikebukuro, Toshima-ku, Tokyo, 171-8501, Japan}
\address{Center for Interdisciplinary Theoretical and Mathematical Sciences, RIKEN, 2-1 Hirosawa, Wako, Saitama 351-0198 Japan}
\email{kan.kitamura@rikkyo.ac.jp}
\subjclass{Primary 46L37, Secondary 46L35}
\keywords{Tensor category; Jiang--Su algebra; minimal action}
\begin{abstract}
	We show that every classifiable simple C*-algebra, in particular the Jiang--Su algebra, 
	admits outer actions of several discrete quantum groups, such as finite quantum groups and duals of certain free compact quantum groups. 
	As a consequence of the construction, we provide new examples of hyperfinite subfactors with the Temperley--Lieb--Jones standard invariant and show that $\operatorname{SU}_q(2)$ can act minimally on some Araki--Woods factors for infinitely many $q$. 
\end{abstract}
\maketitle
\setcounter{tocdepth}{1}
\tableofcontents

\section{Introduction}
In subfactor theory initiated by Jones \cite{Jones1983index}, it has been observed that operator algebras often admit symmetries that cannot be captured by usual groups, which are sometimes called quantum symmetries in the operator algebraic context. 
A celebrated result of Popa \cite{Popa1994classificationamenable} classifies amenable inclusions of hyperfinite $\mathrm{II}_1$ factors of finite index by their standard invariant. 
A way to interpret the standard invariant is through the notion of actions of unitary tensor categories. 

In the C*-algebraic setting, the main target of the classification is a separable simple nuclear C*-algebra in the UCT class that absorbs the Jiang--Su algebra $\cZ$, which is referred to as a classifiable C*-algebra in this paper. 
Over the past few decades, there has been substantial progress in the classification of C*-algebras in this class (see e.g., \cite{Carrion-Gabe-Schafhauser-Tikuisis-White2023+classifying} and references therein). 
Also, recently, Gabe--Szab{\'o} \cite{Gabe-Szabo2024dynamical} established the dynamical version of the Kirchberg--Phillips theorem, providing a breakthrough in the classification of group actions on Kirchberg algebras. 
Along with these developments, quantum symmetries on classifiable C*-algebras and their classification have been an interesting topic. 
For example, actions of fusion categories on AF-algebras of inductive limit type were classified by Chen--Hern{\'a}ndez Palomares--Jones \cite{Chen-HernandezPalomares-Jones2024K-theoretic}. 

Beyond AF-algebras, there are a few known examples of actions of non-trivial unitary tensor categories on classifiable C*-algebras, such as those on Kirchberg algebras \cite{Izumi1993subalgebrasI,Izumi1998subalgebrasII,Kitamura2026actions} and on non-commutative tori \cite{Evans-Jones2025quantum}. 
However, it is in general hard to control the underlying C*-algebra on which a given unitary tensor category $\cC$ acts, as observed by \cite{Evington-GironPacheco2023anomalous,Izumi2023+G-kernels,Evans-Jones2025quantum} in the form of $\K$-theoretic obstructions to the existence of $\cC$-actions. 
This situation is in sharp contrast to the group case, as any classifiable C*-algebra $A$ admits an outer action of a given countable group $\Gamma$. 
Indeed, the Bernoulli shift of $\Gamma$ on the Jiang--Su algebra $\cZ^{\otimes\Gamma}\cong\cZ$ is already outer, but this construction is available only in the group case. 
In this work, we construct an outer $\cC$-action on a classifiable C*-algebra that need not be purely infinite, when $\cC$ is the representation category $\Rep G$ of a certain compact quantum group $G$. 

\begin{thm}[{\autoref{cor_IU_JiangSu}}]\label{main_JiangSu}
	Let $G$ be one of the following compact quantum groups: 
	\begin{itemize}[leftmargin=1.5em]
		\item
		a finite quantum group, 
		\item
		a compact Lie group, 
		\item
		the quantum permutation group $S_n^+$ for $n\geq 4$, 
		\item
		the free unitary quantum group $\rU^+(n)$ for $n\geq 2$, 
		\item
		the free orthogonal quantum groups $\rO^+(n)$ for $n\geq 2$ and $\rO^+\big(\begin{smallmatrix} 0 & -1_{\bM_n} \\ 1_{\bM_n} & 0 \end{smallmatrix}\big)$ for $n\geq 1$. 
	\end{itemize}
	Then, when $A$ is a classifiable C*-algebra, in particular the Jiang--Su algebra $\cZ$, there is an outer action of $\Rep G$ on $A$. 
\end{thm}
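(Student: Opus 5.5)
Since $\cZ\otimes A\cong A$ for classifiable $A$, a $\Rep G$-action on $A$ can be obtained from one on $\cZ$ by tensoring with the trivial action on $A$. If the action on $\cZ$ is outer in a sufficiently strong sense (for instance, if $\Hom$-spaces between non-trivial irreducible sectors and the identity vanish even in central sequence or ultrapower-type algebras), outerness should pass to the tensor product. So the plan is to build an outer action on $\cZ$ and then transport it to $A$.

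\textbf{Step 1: a quantum Bernoulli shift.} The group-case Bernoulli shift $\cZ^{\otimes\Gamma}$ suggests the analogue: an action of the discrete dual $\widehat G$ given by an infinite tensor product or inductive limit of the form
\[
D \;=\; \varinjlim \bigl(B\otimes B\otimes\cdots\bigr),
\]
using a free product or "quantum increasing" construction. Concretely, I would start from an ergodic or free action of $G$ on a finite-dimensional (or UHF-like) C*-algebra $B$; for the listed quantum groups such actions exist, e.g.\ on $\bM_n$ through the fundamental representation. I would then take the infinite iterated braided/Drinfeld-type tensor product, or better, the crossed-product duality: a $G$-action $\alpha$ on $D$ gives a $\Rep G$-action on $D^G$ or on $D\rtimes G$. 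The standard mechanism is that a $G$-action with full spectral subspaces (for instance, an inductive limit of the adjoint actions $\Ad(u^{\otimes k})$ on $\mathrm{End}(H^{\otimes k})$) yields a $\Rep G$-action on the fixed-point algebra via the spectral-subspace Hilbert bimodules.

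\textbf{Step 2: identify the algebra.} The resulting $D^G$ is an inductive limit of finite-dimensional algebras $\mathrm{End}_G(H^{\otimes k})$, hence AF. I will next tensor with $\cZ$, taking $\cZ\otimes D^G$ and letting $\Rep G$ act via $\id_\cZ\otimes$ the action. The goal is then to show that the algebra obtained is itself absorbed into $\cZ$ via a Jiang--Su-type construction: replace the UHF-like building blocks by dimension-drop algebras, using a $\Rep G$-equivariant version of the Jiang--Su inductive system whose connecting maps are twisted by the spectral bimodules. This is where the restriction on $G$ enters: I need $\K$-theoretic compatibility, namely that the fusion rules act on $\K_0$ in a way compatible with $\K_0(\cZ)=\bZ$ with trivial order-unit constraints. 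For the listed $G$, which are finite, Lie, or have amenable-type fusion with integer dimension functions available (via monoidal equivalence with groups such as $\SU_q(2)$, free products, and so on), I expect to find fiber functors or dimension functions making this consistent. Then classification (for instance the Elliott invariant together with $\cZ$-stability) identifies the underlying algebra with $\cZ$.

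\textbf{Step 3: outerness.} I will show that no non-trivial irreducible object acts by a bimodule containing the identity. Via the spectral description this reduces to freeness of the $G$-action on the infinite product, proved by the standard Bernoulli argument: asymptotically commuting elements split into tensor factors. The main obstacle is Step 2, namely producing the action on exactly $\cZ$ (a projectionless algebra) rather than on an AF or UHF algebra, since the Hilbert bimodules must be realized without projections. I would first try to use the monoidal equivalence/invariance results from earlier in the paper to reduce to cases where a genuine (ergodic full-multiplicity) action on a dimension-drop-compatible system exists, and then apply a Jiang--Su-style stationary inductive limit.
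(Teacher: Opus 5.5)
Your reduction from a classifiable $A$ to $\cZ$, tensoring an outer action on $\cZ$ with $\id_A$, is what the paper does. Everything after that, however, is a programme rather than a proof. The decisive step, an outer action on $\cZ$ itself, is left as an expectation (``I expect to find fiber functors or dimension functions''), and the mechanism you chose points the wrong way.

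\textbf{The compact-side construction.} A product-type action $\bigotimes_k\Ad u$ of the compact quantum group $G$ on $D=\varinjlim\cB(H^{\otimes k})$ has fixed-point algebra $\varinjlim\Hom_{\Rep G}(u^{\otimes k},u^{\otimes k})$. This is AF and in general not even simple: for $G=\SU(2)$ and $u$ the fundamental representation, the one-dimensional blocks on $\mathrm{Sym}^k\bC^2$ give it a character. So $\cZ\otimes D^G$ is not $\cZ$. There is also no quantum Bernoulli shift to fall back on; the equivariant Jiang--Su system is exactly what has to be built, and you give no construction of it.

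\textbf{Outerness.} Step 3 conflates freeness of the $G$-action, which only makes the spectral bimodules a tensor functor, with what outerness of the spectral-subspace action actually requires, namely $(D^G)'\cap D=\bC$. For product-type actions, this relative commutant in the tracial completion is Izumi's noncommutative Poisson boundary of $\wc{G}$. That boundary is non-trivial for non-amenable duals such as those of $\rU^+(n)$ and of $\rO^+(n)$ with $n\geq3$ (Vaes--Vander Vennet); contrary to your description, these fusion rules are not amenable. So this route is not outer, at least at the von Neumann level, and no tensor-splitting argument can show otherwise. Finally, the hypothesis on $G$ does not enter through $\K$-theoretic compatibility: integrality of dimensions is necessary, but it produces no construction.

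\textbf{The missing idea.} Use finite-dimensional unitary representations of the \emph{discrete} dual $\Gamma=\wc{G}$, i.e.\ finite-dimensional $*$-representations $\Pi_\theta$ of $\cO(G)$. Let $\Gamma$ act on the Jiang--Su dimension-drop blocks $A_n\subset\cB(\cV_{\vartheta_{[1,n]}})\otimes C([0,1])$ by $\Ad V_{\vartheta_{[1,n]}}$.
\begin{itemize}
\item All multiplicities are polynomials with non-negative integer coefficients in a single $\theta\in\Rep\Gamma$, e.g.\ $\phi_n=(\theta\oplus\mathbbm{1}^{\oplus2})^{\otimes2^n}$ and $\psi_n=\phi_n\oplus\mathbbm{1}$. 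These have coprime dimensions and satisfy the Jiang--Su recursions.
\item The connecting maps use norm-continuous paths of unitaries in the finite-dimensional C*-algebra $\Hom_{\Rep\Gamma}(\vartheta_{[1,n+1]},\vartheta_{[1,n+1]})$, whose unitary group is connected.
\end{itemize}
Non-equivariantly this is literally Jiang--Su's construction, so no $\K$-theory or classification is needed.

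The restriction on $G$ is used only for outerness. One needs $\theta$ tensorially (equivalently, inner) faithful, i.e.\ $\prod_n\Pi_{\theta^{\otimes n}}$ injective on $\cO(G)$. For the listed $G$ this follows from the topological generation results of Brannan--Chirvasitu--Freslon and Chirvasitu, together with permanence under short exact sequences. Given this, outerness is proved in the tracial completion in two parts:
\begin{itemize}
\item for invertible $\pi\neq\mathbbm{1}$, by the contraction estimate $|\tr(V_{\pi,\vartheta_{n,i}})|\leq c'<1$;
\item for the relative commutants, by the Evans--H{\o}egh-Krohn Perron--Frobenius theorem for the unital completely positive maps $\Theta_V$, which gives $\|\Theta_{V_{\pi,\phi_n}}-\wt{\tr}_{\cH_\pi}\|\to0$.
\end{itemize}
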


This is shown by constructing actions on $A$ of the Pontryagin dual of $G$ (see \autoref{thm_IU_JiangSu}). To the best of the author's knowledge, this result gives the first example of an action on $\cZ$ of a unitary tensor category that cannot be captured by a group (or its Pontryagin dual). 
Note that $\Rep\SU_q(2)$ and $\Rep\SO_q(3)$ for certain $q\in(-1,1)\setminus\{0\}$ are unitarily monoidally equivalent to $\Rep G$ as listed in \autoref{main_JiangSu} by \cite{Bichon-DeRijdt-Vaes2006ergodic,DeRijdt-VanderVennet2010actions,Soltan2010quantumSO(3)}.

Outer actions of $\Rep\SU_q(2)$ and $\Rep\SO_q(3)$ provide a fundamental class of subfactors. 
For $q\in(0,1]$, when we write $\pi_{\frac{1}{2}}\in\Rep\SU_q(2)$ for the fundamental unitary representation of $\SU_q(2)$ on the two-dimensional Hilbert space, the object $\pi_{\frac{1}{2}}^{\otimes2}$ belongs to the subcategory $\Rep\SO_q(3) \subset \Rep\SU_q(2)$ 
and is equipped with the structure of a Q-system induced by regarding $\pi_{\frac{1}{2}}^{\otimes2} \cong \pi_{\frac{1}{2}} \otimes \overline{ \pi_{\frac{1}{2}} }$ as the internal endomorphism of the object $\pi_{\frac{1}{2}} \in \Rep\SU_q(2)$. 
The associated standard invariant is called the Temperley--Lieb--Jones standard invariant or sometimes the trivial standard invariant. 

Since the initiation of the subfactor theory by Jones \cite{Jones1983index}, it has remained an open problem to determine the set of possible indices of irreducible hyperfinite subfactors (see \cite[Problem 1]{Jones1983index}, \cite[6.2.1]{Popa2023W-representations}). 
As a naive candidate for the standard invariant of such a subfactor, it is natural to ask what the set of possible indices of hyperfinite subfactors with the trivial standard invariant is (see \cite[6.3.2]{Popa2023W-representations} and also \cite[Section 1]{Bisch-Caceres2025graph}). 
If we allow the factor to be non-hyperfinite, the free group factor $L(\bF_\infty)$ admits a subfactor with the trivial standard invariant by Shlyakhtenko--Ueda \cite{Shlyakhtenko-Ueda2002irreducible}.

Dually, as a potential source of a hyperfinite subfactor with the trivial standard invariant, the existence of a minimal action of the compact quantum group $\SU_q(2)$ on an injective factor is also an interesting question (see e.g.\ \cite[Section 1]{Vaes2005strictly}). 
There are several constructions of minimal actions of quantum groups. 
By a result of Ueda \cite{Ueda1999minimal}, for any $q\in(0,1)$, the compact quantum group $\SU_q(n)$ can act minimally on some factor with separable predual. 
Its generalization to locally compact quantum groups $G$ with separable $L^2(G)$ was shown by Vaes \cite{Vaes2005strictly}.

In recent work by Bisch--C{\'a}ceres \cite{Bisch-Caceres2025graph,Bisch-Caceres2025+new}, an irreducible subfactor of $\cR$ with the Temperley--Lieb--Jones standard invariant is constructed for every index $d\in (4,3+\sqrt{5}]$ that admits some irreducible subfactor of finite depth. 
To ensure the triviality of the standard invariant of the subfactors therein, the classification of the standard invariants beyond index $4$ played an important role. At present, this classification is completed up to the indices in $(4,5.25]$ by 
\cite{Jones-Morrison-Snyder2014classification,Izumi-Morrison-Penneys-Peters-Snyder2015subfactors,Afzaly-Morrison-Penneys2023classification} (see also references therein). 
Other known examples of irreducible hyperfinite subfactors include Ocneanu's example at index $\|E_{10}\|^2 = 4.02641...$, Schou's family of examples \cite{Schou-thesis} with indices in $(4,5)$, and Bisch's example \cite{Bisch1994example} at index $\frac{9}{2}$. 
These examples have the trivial standard invariant by the classification result. 
When the index is in $\bZ_{\geq 6}$, which is not covered by the classification of the standard invariants at present, our construction of \autoref{main_JiangSu} yields new examples of hyperfinite subfactors with the trivial standard invariant. Since our method has some room for controlling the unitary tensor category, this construction also provides examples of minimal $\SU_q(2)$-actions on some injective factors such that $q\in (-1,1)\setminus\{0\}$. 

\begin{thm}[{\autoref{cor_RFD_UHF}, \autoref{thm_minimal_action}}]\label{main_subfactor}
	We have the following. 
	\begin{enumerate}[leftmargin=*,label=(\arabic*)]
		\item\label{item_main_subfactor_1}
		For any $d\in\bZ_{\geq 4}$, there is an irreducible subfactor of the hyperfinite $\mathrm{II}_1$ factor with index $d$ and trivial standard invariant. 
		\item\label{item_main_subfactor_2}
		The Araki--Woods factor of type $\mathrm{III}_{1}$ admits a minimal $\SU_q(2)$-action for any $q\in (-1,1)\setminus\{0\}$ with $q+q^{-1}\in \{ -n, 2n \mid n\in \bZ_{\geq 1} \}$. 
	\end{enumerate}
\end{thm}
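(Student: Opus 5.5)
Both parts will come out of \autoref{main_JiangSu}, applied to free orthogonal quantum groups whose representation categories are those of $\SU_q(2)$. By \cite{Bichon-DeRijdt-Vaes2006ergodic}, $\Rep\rO^+(F)$ with $F\bar F=\pm1$ is unitarily monoidally equivalent to $\Rep\SU_q(2)$, where $q+q^{-1}=-\operatorname{sign}(F\bar F)\,\tr(F^*F)$. For $F=1_{\bM_n}$ we get $\rO^+(n)$ with $q+q^{-1}=-n$. For the symplectic matrix $\big(\begin{smallmatrix} 0 & -1 \\ 1 & 0 \end{smallmatrix}\big)\otimes 1_{\bM_n}$ we get $q+q^{-1}=2n$. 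These are exactly the values listed in \ref{item_main_subfactor_2}. Under this equivalence the fundamental representation $u$ corresponds to $\pi_{\frac12}$, the Q-system $u\otimes\bar u$ gives the Temperley--Lieb--Jones standard invariant, and its index is $\dim_q(u)^2=|q+q^{-1}|^2$.

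For \ref{item_main_subfactor_1} I will use a $\rO^+(n)$ example with a finite-dimensional-friendly target instead: the UHF/tracial version of the construction (\autoref{cor_RFD_UHF}). The outer action of the discrete dual of $\rO^+(n)$ that the paper builds (\autoref{thm_IU_JiangSu}) should be realizable on a UHF algebra, or on $\cZ$, with a trace that is invariant in the appropriate sense. Passing to the weak closure in the GNS representation of that trace gives an outer $\Rep G$-action on $\cR$. Outerness must survive the weak closure, which I expect to follow from strong outerness or centrally-free type properties established in the construction.

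Next I will take the object $u$ (or $u\otimes\bar u$ with its Q-system) and form the associated subfactor $N\subset\cR$. Its standard invariant is the one generated by $u$ in $\Rep\rO^+(n)$, which is Temperley--Lieb--Jones because $\rO^+(n)$ has the fusion rules of $\SU(2)$. For a $\mathrm{II}_1$ factor the index is the square of the intrinsic dimension, so $[\cR:N]=n^2$. Here I must be careful about whether $u$ itself can be used, since it is self-conjugate for $\rO^+(n)$, or whether the index is $n$ via a different normalization. If the dimension of $u$ is $n$, the subfactor $\alpha_u(\cR)\subset\cR$ coming from the endomorphism has index $d(u)^2=n^2$. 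To reach every integer $d\geq4$, I should instead take the object $u\otimes\bar u$ with its Q-system built from $\pi_{\frac12}\otimes\overline{\pi_{\frac12}}$, as in the introduction. Whichever normalization is correct, the indices $d$ should be realized via $\dim u=d$ for $\rO^+(d)$, $d\geq4$, perhaps supplemented by $\rU^+$ or the symplectic family; which dimension convention applies must be checked.

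For \ref{item_main_subfactor_2} I will take a classifiable C*-algebra carrying the outer dual action and a quasi-free type KMS state whose modular group interacts correctly with the non-Kac quantum group. Its weak closure should be an injective factor of type $\mathrm{III}_1$, hence the Araki--Woods factor $R_\infty$ by Connes--Haagerup. I then convert the outer action of $\widehat{G}$ (equivalently of $\Rep G$) into a minimal $G$-action on the crossed product/dual side via Takesaki--Takai-type duality for quantum groups, in the manner of \cite{Vaes2005strictly}. Minimality amounts to the irreducibility $M^G{}'\cap M=\bC$ together with generation of all spectral subspaces. The main obstacle is controlling the type of the factor: I need the relevant state to be almost periodic with the right Connes spectrum, and the resulting factor to be injective. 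Injectivity should follow from amenability of the construction on the nuclear C*-level, even though $\widehat{G}$ is not amenable. To verify type $\mathrm{III}_1$, my first attempt would be to compute the Connes invariant $S$ from the eigenvalues of $F^*F$ twisted by a Powers-type factor tensor.
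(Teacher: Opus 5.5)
\textbf{Part (1): the proposed groups only give square indices.} As you compute yourself, $u\cong\pi_{\frac12}$ has dimension $n$ in $\Rep\rO^+(n)$, so the Q-system $u\otimes\bar u$ gives index $n^2$. The symplectic family gives $(2n)^2$ and $\rU^+(n)$ again gives $n^2$. Choosing $\dim u=d$ produces index $d^2$, not $d$. These groups therefore reach only perfect squares; the paper mentions this only as the special case $d=m^2$ via $\rO^+(m)$, after \autoref{cor_RFD_UHF}.

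The missing idea is the quantum permutation group $S_d^+$. We have $\Rep S_d^+\simeq\Rep\SO_q(3)$ with $(q+q^{-1})^2=d$. The object $\pi_{\frac12}\otimes\overline{\pi_{\frac12}}\cong\mathbbm{1}\oplus\pi_1$, with its Q-system structure, lies in $\Rep\SO_q(3)$ even though $\pi_{\frac12}$ does not, and it has dimension $(q+q^{-1})^2=d$. In $\Rep S_d^+$ it is the fundamental $d$-dimensional representation.

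The real input is then a tensorially faithful representation of $\wc{S_d^+}$. This is \autoref{eg_tensor_faith_Sn+}, which rests on the topological generation results of \cite{Brannan-Chirvasitu-Freslon2020topological}. With it, \autoref{thm_RFD_UHF} gives an outer $\wc{S_d^+}$-action on $\cR$. Outerness there is proved directly on the von Neumann algebra, so no separate argument about it surviving the weak closure is needed. Full faithfulness then makes the standard invariant of the resulting subfactor that of the Q-system, i.e.\ Temperley--Lieb--Jones.

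\textbf{Part (2): no route from $H$ to an $\SU_q(2)$-action.} Your identification of $H$ (namely $\rO^+(n)$ and the symplectic $\rO^+$) is right, but the step from it to an $\SU_q(2)$-action is missing.

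The paper's constructions only produce outer actions of duals admitting a tensorially faithful representation. Such duals are RFD and hence of Kac type (\autoref{rem_RFD_Kac}), so no outer $\wc{\SU_q(2)}$-action is available. You give no construction of one, nor of the ``quasi-free KMS state'': you do not say which state, why the action extends to its GNS closure, or why it stays outer there. Even granting all this, the minimal action would live on a crossed product whose type and injectivity you would still have to determine.

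What you actually have is an outer action of $\wc H$. A Takesaki--Takai crossed product by $\wc H$ yields an $H$-action, not an $\SU_q(2)$-action, and on a non-injective algebra. Note that it is $\wc H$ that is non-amenable, not $\wc{\SU_q(2)}$; coamenability of $\SU_q(2)$ is exactly what the injectivity argument uses.

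The paper's device is the $(\SU_q(2),H)$-biGalois object $P$ of \autoref{rem_biGalois}. Take a trace-preserving outer $\wc H$-action $\gamma$ on $A$ with $A''\cong\cR$, and form the braided tensor product $B=P\boxtimes_{\wc H}A$. Then:
\begin{itemize}
\item $B$ carries the $\SU_q(2)$-action $\lambda\otimes\id$, with fixed points $\gamma(A)$.
\item $B$ is nuclear by \autoref{lem_fixed_nuclear}, so its weak closure is injective.
\item Minimality follows from the outerness of $\gamma$.
\item The type is read off from the modular operator of $h_P\otimes\tr$, which acts by $|q|^{\dim_\bC\cG_\pi+1-2i}$ on the components built from $x^\pi_{i,k}$. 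This gives type $\mathrm{III}_{|q|}$. It does not come from $F^*F$, which equals $1$ for both of your matrices.
\end{itemize}
Type $\mathrm{III}_1$ is then obtained by tensoring with $R_\infty$ carrying the trivial action, which preserves minimality.
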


Note that when $d=4$ and $5$, \ref{item_main_subfactor_1} is already covered by Jones \cite{Jones1983index} and Bisch--C{\'a}ceres \cite{Bisch-Caceres2025+new} respectively. 
Regarding \ref{item_main_subfactor_2}, it follows from a result of Vaes \cite{Vaes2005strictly} that there is no minimal $\SU_q(2)$-action on a factor of type $\mathrm{I}$ or $\mathrm{II}$ when $q\in(-1,1)\setminus\{0\}$, 
and thus we can hope for a minimal $\SU_q(2)$-action only on a factor of type $\mathrm{III}$. 
Also, it might be noteworthy that, unlike the aforementioned examples of hyperfinite subfactors with trivial standard invariant, the proofs of \ref{item_main_subfactor_1} when $\sqrt{d}$ is an integer and \ref{item_main_subfactor_2} do not depend on the classification of the standard invariant (cf.\ \autoref{eg_tensor_faith_Sn+} and \autoref{eg_tensor_faith_On+}).

This paper is organized as follows. 
After recalling terminologies on quantum groups in \autoref{sec_prelim}, we investigate finite dimensional unitary representations that satisfy an analog of faithfulness in \autoref{sec_separateDQG}. This property is called inner faithfulness by Banica--Bichon \cite{Banica-Bichon2010Hopf}, but we provide another characterization of it that is suitable for our purposes (\autoref{lem_tensor_faith}). 
As a consequence of the topological generation results for quantum permutation groups and free unitary quantum groups by Brannan--Chirvasitu--Freslon \cite{Brannan-Chirvasitu-Freslon2020topological} and Chirvasitu \cite{Chirvasitu2020topological} respectively, we observe that the Pontryagin duals of the compact quantum groups listed in \autoref{main_JiangSu} fit into this framework.

In \autoref{sec_IU_JiangSu}, we show \autoref{main_JiangSu}. The strategy is to equip the prime dimension drop C*-algebras appearing in the construction of the Jiang--Su algebra \cite{Jiang-Su1999simple} with actions of the discrete quantum group. 
Historically, such an inductive limit construction was investigated through unitary representations of a compact quantum group, which led to the theory of Poisson boundaries of quantum groups \cite{Izumi2002non-commutative} and of tensor categories \cite{Neshveyev-Yamashita2017Poisson}. 
Our approach uses finite dimensional unitary representations of a discrete, rather than compact, quantum group, which imposes the assumption of inner faithfulness. 
The outerness of the resulting $\Gamma$-action is shown separately on the group-like part of $\Gamma$ and the remaining part of $\Gamma$. 
The key technique to deal with the latter case is the Perron--Frobenius theorem for irreducible unital completely positive maps on finite dimensional C*-algebras due to Evans--H{\o}egh-Krohn \cite{Evans-HoeghKrohn1978spectral}. 

Finally, in \autoref{sec_applications}, we show \autoref{main_subfactor} using the construction of \autoref{main_JiangSu}. In \autoref{thm_RFD_UHF}, we also present a variant of \autoref{main_JiangSu} in the case of UHF algebras, which allows for a slightly broader class of discrete quantum groups. 
In \autoref{sec_prelim_action}, we briefly recall the notions of actions of quantum groups and unitary tensor categories that appear in \autoref{sec_IU_JiangSu} and \autoref{sec_applications}, and make some remarks on compatibility.

\subsection*{Acknowledgments}
This work was supported by JSPS KAKENHI Grant Number 25K17272. 
The author would like to thank Yasuyuki Kawahigashi, Yasuhiko Sato, Yuhei Suzuki, and Reiji Tomatsu for stimulating discussions and valuable comments. 
During the preparation of this manuscript, generative AI tools (Gemini Flash, Thinking, Pro and the free plan of ChatGPT) were used solely for language editing and grammatical refinement. The author developed all mathematical concepts independently and takes full responsibility for the contents of the paper. 

\section{Preliminaries}\label{sec_prelim}

We assume the coefficient field of a linear space is $\bC$ unless specified otherwise. 
We fix several conventions. 

\begin{itemize}[leftmargin=1.5em]
	\item
	For $r\in\bR$, we write $\bZ_{\geq r}:=\bZ\cap[r,\infty)$. 
	\item
	For $n\in\bZ_{\geq 1}$, we write $\bM_n$ for the C*-algebra consisting of complex $n\times n$-matrices. Also, $\bM_{n^\infty}$ denotes the UHF algebra of type $n^\infty$ when $n\geq 2$. 
	\item
	We write $\delta_{i,j}$ for the Kronecker delta. 
	\item
	For a non-zero finite dimensional Hilbert space $\cH$, we write $\tr_{\cH} := \tr_{\cB(\cH)}$. 
	For a state $f$ on a C*-algebra $A$ and a normal state $g$ on a von Neumann algebra $M$, we write $L^2(A,f)$ and $L^2(M,g)$ for the Hilbert space of the GNS construction of $f$. When $f$ or $g$ is faithful, we often regard $A\subset\cB(L^2(A,f))$ or $M\subset\cB(L^2(M,g))$ with abuse of notation. 
	\item
	Let $A$ be a C*-algebra. We write $\cM(A)$ for the multiplier algebra of $A$. We write $\cU(A)$ for the set of unitary elements in $A$ (which is empty unless $A$ is unital). When the meaning is clear, we sometimes omit parentheses. 
	\item
	For a $*$-homomorphism $f\colon A\to \cM(B)$ that is non-degenerate, we still write $f\colon\cM(A)\to \cM(B)$ for the $*$-homomorphism extending $f$ on $A$ that is strictly continuous on the unit ball. 
	\item
	For C*-algebras $A$ and $B$, we write $A\otimes B$ for the spatial tensor product. 
	The Hilbert space defined as the tensor product of Hilbert spaces $\cH$ and $\cG$ is denoted by $\cH\otimes\cG$. 
	Otherwise, $\otimes$ denotes an algebraic tensor product. 
	When $A$ and $B$ are von Neumann algebras, $A\barotimes B$ denotes their von Neumann algebraic tensor product. 
	For topological vector spaces $X$ and $Y$, we sometimes write $X\odot Y$ for their tensor product in the algebraic sense to emphasize that we do not consider the topology. 
	\item
	For a family of C*-algebras $(A_i)_{i\in I}$, we write $\bigoplus_{i\in I}A_i$ and $\prod_{i\in I}A_i$ for the $c_0$- and $\ell^\infty$- direct sums respectively. For a family of Hilbert spaces $(\cH_i)_{i\in I}$, we write $\bigoplus_{i\in I}\cH_i$ for the $\ell^2$-direct sum. 
	\item
	For a Banach space $X$ and its subset $S\subset X$, the norm-closed linear span of $S$ is denoted by $\cspan S \subset X$. 
	\item
	By an abuse of notation, the same symbol $\cZ$ is used to denote both the Jiang--Su algebra and the center. The latter case appears only in the form of $\cZ(A)$, where $A$ is an algebra or a group. Otherwise $\cZ$ denotes the Jiang--Su algebra. 
\end{itemize}

In this section, we set up terminologies and conventions on compact and discrete quantum groups. 
\subsection{Compact and discrete quantum groups}\label{ssec_prelim_CQGDQG}
The main interest of this paper is actions of discrete quantum groups and compact quantum groups. To make use of the Pontryagin duality, we begin by recalling the general framework of the locally compact quantum group by Kustermans--Vaes \cite{Kustermans-Vaes2000locally,Kustermans-Vaes2003locally}. 
For a weight $\omega$ on a von Neumann algebra $M$, we write $\cN_\omega := \{ x\in M\mid \omega(x^*x) \in [0,\infty) \}$. 
\begin{df}
	A \emph{locally compact quantum group} is a pair $G=(L^\infty(G),\Delta_G)$ of a von Neumann algebra $L^\infty(G)$ and a unital normal $*$-homomorphism $\Delta_G\colon L^\infty(G)\to L^\infty(G)\barotimes L^\infty(G)$ called the \emph{comultiplication} such that $(\Delta_G\otimes\id)\Delta_G = (\id\otimes\Delta_G)\Delta_G$ and there are faithful normal semifinite weights $h_r, h_l \colon L^\infty(G)_+ \to [0,\infty]$ satisfying $h_r(\id\otimes\omega)\Delta_G(a)=h_r(a)$ and $h_l(\omega\otimes\id)\Delta_G(b)=h_l(b)$ for all $a,b\in L^\infty(G)_+$ and any normal state $\omega\in L^\infty(G)_*$. Such $h_l$ and $h_r$ are unique up to scalar multiplication, and called the \emph{left Haar weight} and the \emph{right Haar weight} of $G$, respectively. 
\end{df}

Let $G$ and $H$ be locally compact quantum groups. If there is a $*$-isomorphism $f\colon L^\infty(G) \to L^\infty(H)$ such that $\Delta_{H}f = (f\otimes f)\Delta_G$, then we write $G\cong H$ and say that $G$ and $H$ are isomorphic. 
We consider the GNS $*$-representation $L^\infty(G)\to \cB(L^2(G))$ for $h_r$, and there is a unitary $V^G\in \cU(L^2(G)\otimes L^2(G))$ such that for any $a,b\in \cN_{h_r}$, we have $\Delta_G(a)(1\otimes b)\in \cN_{h_r\otimes h_r}$ and $V^G(a\otimes b) = \Delta_G(a)(1\otimes b)$ by regarding $a\otimes b$ and $\Delta_G(a)(1\otimes b)$ as elements in $L^2(G)\otimes L^2(G)$ via the GNS construction of $h_r\otimes h_r$. 
Then, $V^G (a\otimes 1) V^{G*} =\Delta_G(a)$ for all $a\in L^\infty(G)$. 
We have the well-defined locally compact quantum group $\wc{G}:=(L^\infty(\wc{G}),\Delta_{\wc{G}})$, such that 
\begin{align*}
	L^\infty(\wc{G}) & := \{ (\id_{\cB(L^2(G))}\otimes\omega)(V^G) \mid \omega\in\cB(L^2(G))_* \}'' \subset \cB(L^2(G)) , 
	\\
	\Delta_{\wc{G}} & \colon L^\infty(\wc{G}) \ni a\mapsto V^{G*}(1\otimes a)V^G \in L^\infty(\wc{G})\barotimes \cB(L^2(G)) , 
\end{align*}
where the image of $\Delta_{\wc{G}}$ actually lies in $L^\infty(G)\barotimes L^\infty(G)$. 
We have $G\cong \wc{{\wc{G}}}$. 

We let 
\begin{align*}
	&
	C_0(G) := \cspan \{ (\omega\otimes\id_{\cB(L^2(G))})(V^G) \mid \omega\in\cB(L^2(G))_* \} \subset \cB(L^2(G)) , 
\end{align*}
which is a non-degenerate C*-subalgebra of $\cB(L^2(G))$ such that $C_0(G)''=L^\infty(G)$, and $\Delta_G$ restricts to a non-degenerate $*$-homomorphism $C_0(G)\to \cM(C_0(G)\otimes C_0(G))$. 
Applying this construction to the locally compact quantum group $\wc{G}$, we obtain $C_0(\wc{G})$ as a C*-subalgebra of $L^\infty(\wc{G})$. The GNS $*$-representation of $C_0(\wc{G})$ on $L^2(\wc{G})$ is canonically identified with $C_0(\wc{G})\subset\cB(L^2(G))$ so that 
\begin{align*}
	&
	C_0(\wc{G}) = \cspan \{ (\id_{\cB(L^2(G))}\otimes\omega)(V^G) \mid \omega\in\cB(L^2(G))_* \} \subset \cB(L^2(G)) . 
\end{align*}
With this identification, we have $V^G\in \cU\cM( C_0(\wc{G}) \otimes C_0(G) )$. 

We write $G^{\op}:=(L^\infty(G),(\cdot)_{21}\circ\Delta_{G})$, where $(\cdot)_{21}$ indicates the $*$-automorphism $L^\infty(G)\barotimes L^\infty(G)\ni x\otimes y \mapsto y\otimes x\in L^\infty(G)\barotimes L^\infty(G)$ by the slight abuse of the so-called leg notation. Then, $G^{\op}$ is a locally compact quantum group. 
Depending on the reference, the Pontryagin dual of $G$, often denoted by $\hat{G}$, may refer to our $\wc{G}$ 
or the locally compact quantum group that is isomorphic to $(\wc{G})^{\op}$. In this paper, we shall use $\wc{G}$ to clarify our convention of the Pontryagin dual. 
As $(\wc{G})^{\op}\cong (G^{\op})^{\vee}$, we can often easily translate the results on one convention of the Pontryagin dual to the other. 

A \emph{compact quantum group} is a locally compact quantum group $G$ such that $h_l(1)<\infty>h_r(1)$. Then, there is a unique normal state $h_G$ on $L^\infty(G)$ that is a left Haar weight and a right Haar weight simultaneously. We call $h_G$ the \emph{Haar state} on $G$. 
A locally compact quantum group $\Gamma$ such that $\Gamma\cong\wc{G}$ for some compact quantum group $G$ is called a \emph{discrete quantum group}. 
It follows from $G\cong \wc{{\wc{G}}}$ that $\wc{\Gamma}$ is a compact quantum group for any discrete quantum group $\Gamma$. 
Following the convention for locally compact groups, we write $\ell^\infty(\Gamma):=L^\infty(\Gamma)$, $c_0(\Gamma):=C_0(\Gamma)$, $\ell^2(\Gamma):=L^2(\Gamma)$, and $C(G):=C_0(G)$ for a discrete quantum group $\Gamma$ and a compact quantum group $G$. 
A locally compact quantum group $G$ is a compact quantum group if and only if $C_0(G)$ is unital. 
A compact quantum group that is also a discrete quantum group is called a \emph{finite quantum group}. By definition, a locally compact quantum group $G$ is a finite quantum group if and only if $\wc{G}$ is a finite quantum group. It is not hard to see 
that a locally compact quantum group $G$ is a finite quantum group if and only if $\dim_\bC C_0(G)<\infty$, which is equivalent to $\dim_\bC L^2(G)<\infty$. 

\begin{rem}\label{notation_GGamma}
	We often use the symbols $\Gamma$ and $\Lambda$ for discrete quantum groups. When a compact quantum group denoted by $G$ or $H$ is given, we adopt the convention that $\Gamma$ or $\Lambda$ denotes the discrete quantum group such that $\Gamma = \wc{G}$ or $\Lambda = \wc{H}$, respectively, unless specified otherwise. 
\end{rem}

\subsection{Unitary representations}\label{ssec_prelim_Rep}

We refer to \cite{Neshveyev-Tuset-book} for terminologies and basic facts on C*-tensor categories and representation theory of compact quantum groups. For objects $\pi,\tau$ in a C*-tensor category $\cC$, we write $\pi\cong \tau$ if there is a unitary in $\Hom_\cC(\pi,\tau)$ and $\pi\leq\tau$ if there is an isometry in $\Hom_\cC(\pi,\tau)$. 
An example of a C*-tensor category is the category $\Hilb$ of finite dimensional Hilbert spaces with tensor products. As in the discussion after \cite[Definition 2.1.1]{Neshveyev-Tuset-book}, we shall regard that $\Hilb$ is strict. 

Let $G$ be a locally compact quantum group. A pair $(\cH,V)$ of a Hilbert space $\cH$ and a unitary $V\in\cU\cM(\cK(\cH)\otimes C(G))$ is a \emph{unitary representation} of $G$ if $(\id_{\cK(\cH)}\otimes\Delta_G)(V) = V_{12}V_{13}$ by using the so-called leg notation. 
For example, $(L^2(G),V^G)$ is a unitary representation of $G$, which stands for the analog of the right regular representation. 
Also, $(L^2(G),V^G_{21})$ is a unitary representation of $\wc{G}$. 
We write $\mathbbm{1} := (\bC, 1_{\cM(C_0(G))})$ and call it the \emph{trivial} unitary representation of $G$. 
There is a `universal' unitary representation $(\cH^u,V^u)$ of $G$ such that $C^u_0(\wc{G}) := \{ (\id_{\cB(\cH^u)}\otimes\omega)(V^u) \mid \omega\in\cB(L^2(G))_* \} \subset\cB(\cH^u)$ is a non-degenerate C*-subalgebra such that, for each Hilbert space $\cG$, there is a canonical bijective correspondence between unitary representations of $G$ on $\cG$ and non-degenerate $*$-representations of $C^u_0(\wc{G})$ on $\cG$. Moreover, there are a canonical non-degenerate $*$-homomorphism $\Delta_G^u\colon C_0^u(G)\to \cM(C_0^u(G)\otimes C_0^u(G))$ such that $(\Delta_G^u\otimes\id_{C_0^u(G)})\Delta_G^u = (\id_{C_0^u(G)}\otimes \Delta_G^u)\Delta_G^u$ and a surjective $*$-homomorphism $\rho_G\colon C_0^u(G)\to C_0(G)$ such that $(\rho_G\otimes\rho_G)\Delta_G^u = \Delta_G\rho_G$. 
Moreover, there is a unique non-zero $*$-homomorphism $\epsilon_G\colon C_0^u(G)\to \bC$ such that $(\epsilon_G\otimes\epsilon_G)\Delta_G^u = \epsilon_G$, called the \emph{counit}. 
Note that $(\id_{C_0^u(G)}\otimes\epsilon_G)\Delta_G^u = \id_{C_0^u(G)} = (\epsilon_G\otimes\id_{C_0^u(G)})\Delta_G^u$. 
We say that $G$ is \emph{coamenable} if $\rho_G$ is a $*$-isomorphism. 
Then, $G$ is coamenable if and only if $\epsilon_G$ factors through $\rho_G$. 
For details, see e.g., \cite[Section 5]{Soltan-Woronowicz2007multiplicativeII}. 
Discrete quantum groups and compact Hausdorff groups are known to be coamenable. 
When $G$ is compact, we write $C^u(G):=C^u_0(G)$. 

We write $\Rep G$ for the C*-tensor category whose objects are unitary representation $(\cH,V)$ of $G$ with $\dim_\bC\cH<\infty$ such that 
\begin{align*}
	&
	\Hom_{\Rep G}((\cH,V),(\cG,U)) := \{ X\in \cB(\cH,\cG) \mid (X\otimes 1)V = U(X\otimes 1) \} 
\end{align*}
and $(\cH,V)\otimes (\cG,U) := (\cH\otimes\cG, V_{13}U_{23})$ for $(\cH,V),(\cG,U) \in \Rep G$ with the unit object $\mathbbm{1}:=(\bC,1)$. 
We say an object $\pi\in\Rep G$ is \emph{irreducible} if $\Hom_{\Rep G}(\pi,\pi)\cong\bC$. 
Let $\Irr G$ be the set of fixed representatives for isomorphism classes of irreducible objects in $\cC$ such that $\mathbbm{1}\in \Irr G$. 
When $G$ is either a compact quantum group or a discrete quantum group, then $\Rep G$ is a unitary tensor category, i.e., a rigid C*-tensor category whose unit object is irreducible. Here, the discrete case is due to \cite[Theorem 4.5]{Soltan2005quantum}. 

Let $G$ be a compact quantum group and $\Gamma:=\wc{G}$ as in \autoref{notation_GGamma}. 
For $\pi=(\cH,V)\in\Rep G$, we write $\cH_\pi:=\cH$ and $V_\pi:=V$. 
Then, there is a unique non-degenerate $*$-homomorphism $\Pi_\pi\colon c_0(\Gamma)\to \cB(\cH_\pi)$ such that $(\Pi_\pi\otimes\id_{C(G)})(V^G)=V_\pi$. Then, $\prod_{\pi\in\Irr G}\Pi_\pi\colon c_0(\Gamma) \to \bigoplus_{\pi\in\Irr G}\cB(\cH_\pi)$ is a $*$-isomorphism (see also \eqref{eq_rem_biGalois_3}). 
If no confusion is likely to occur, we omit $\Pi_\pi$ with abuse of notation. 
We set 
\begin{align*}
	&
	\cO(G) := \{ (\omega\otimes\id_{C(G)})(V_\pi) \mid \pi\in\Rep G, \omega\in\cB(\cH_\pi)_* \} \subset C(G), 
\end{align*}
which is a norm-dense unital $*$-subalgebra of $C(G)$ with $\Delta_G(\cO(G))\subset \cO(G) \odot \cO(G)$. 

Also, $C^u(G)$ is the universal C*-completion of $\cO(G)$, and we have $\Delta^u_G|_{\cO(G)}=\Delta_G|_{\cO(G)}$ (see e.g., \cite[Section 1.2]{Daws-Kasprzak-Skalski-Soltan2012closed}). 
Then, by the universality of $C^u(G)$ above, 
for each Hilbert space $\cH$, there is a canonical bijection between the sets 
$\{ U \mid (\cH,U)\in\Rep\Gamma \}$ and $\{ \text{unital $*$-homomorphisms} \cO(G)\to\cB(\cH) \}$. 
Explicitly, this bijective correspondence is given so that $U$ in the former set and $\Pi$ in the latter set determine each other by, for all $\pi\in\Irr G$, 
\begin{align*}
	&
	(\id_{\cB(\cH_\pi)}\otimes\Pi)(V_\pi)=(\Pi_\pi\otimes\id_{\cB(\cH)})(U_{21}) 
	\in \cU(\cH_\pi\otimes\cH) . 
\end{align*}

For any $\theta=(\cH,U)\in\Rep\Gamma$, we write $\cV_\theta:=\cH$ and $V_\theta := U_{21} \in \cU\cM(c_0(\Gamma)\otimes\cK(\cH))$. Also, we write $\Pi_\theta\colon\cO(G)\to \cB(\cV_\theta)$ for the unital $*$-representation corresponding to $V_\theta$ via the bijection above (with respect to the underlying Hilbert space $\cV_\theta$) and set $V_{\pi,\theta} := (\id_{\cB(\cH_\pi)}\otimes\Pi_\theta)(V_\pi)$ for $\pi\in\Rep G$.

For $\phi,\psi\in\Rep\Gamma$, we have that $(\Pi_\phi\otimes\Pi_\psi)\Delta_{G} = \Pi_{\phi\otimes\psi}$ 
since for all $\pi\in\Irr G$, 
\begin{align*}
	&
	\bigl(\id_{\cB(\cH_\pi)}\otimes((\Pi_\phi\otimes\Pi_\psi)\Delta_{G})\bigr)(V_\pi) = (V_{\pi,\phi})_{12} (V_{\pi,\psi})_{13} 
	\\={}& 
	V_{\pi,\phi\otimes\psi} = (\id_{\cB(\cH_\pi)}\otimes\Pi_{\phi\otimes\psi})(V_\pi) . 
\end{align*}
Also, we have 
\begin{align*}
	&
	\Hom_{\Rep \Gamma}(\phi,\psi) 
	= 
	\{ X\in \cB(\cV_\phi,\cV_\psi) \mid V_{\psi} (1\otimes X) = (1\otimes X)V_\phi \}
	\\={}& 
	\{ X\in \cB(\cV_\phi,\cV_\psi) \mid \Pi_\psi(a)X = X\Pi_\phi(a), \forall a\in\cO(G) \} . 
\end{align*}
Therefore, the unitary tensor category $\Rep\Gamma$ is canonically unitarily monoidally equivalent to the C*-tensor category of finite dimensional unital $*$-representations of $\cO(G)$ with intertwiners as morphisms such that the monoidal structure is given by $(\cH,\Pi)\otimes(\cG,\Upsilon)=(\cH\otimes\cG,(\Pi\otimes\Upsilon)\Delta_G)$ for finite dimensional unital $*$-representations $(\cH,\Pi),(\cG,\Upsilon)$ of $\cO(G)$.

Here, note that this convention of unitary representations of $G$ and $\Gamma$ is somewhat ambiguous when $G$ and $\Gamma$ are finite quantum groups. In this case, we shall follow the convention of $G$ as a compact quantum group and $\Gamma$ as a discrete quantum group. 
For example, for $\pi\in\Rep G$ and $\theta\in\Rep\Gamma$, the underlying Hilbert spaces are denoted by $\cH_\pi$ and $\cV_\theta$ (rather than $\cV_\pi$ and $\cH_\theta$), respectively. 

\subsection{Homomorphisms}\label{ssec_prelim_hom}
For compact quantum groups $H$ and $G$, we say that a linear map $f\colon \cO(H)\to \cO(G)$ is \emph{comultiplicative} if $\Delta_{G}f = (f\otimes f)\Delta_{H}$. 
Let $f\colon \cO(H)\to \cO(G)$ be a unital comultiplicative $*$-homomorphism. We call such $f$ a \emph{$*$-bialgebra map}. Morally, we regard such $f$ as the quantum analog of a group homomorphism $\Lambda\to \Gamma$, where we recall $\Gamma=\wc{G}$ and $\Lambda=\wc{H}$. 
For $\pi\in\Rep H$, we express $\pi=(\cH_\pi,V_\pi)$. 

Take a $*$-bialgebra map $f\colon\cO(H)\to \cO(G)$ and objects $\pi,\tau\in\Rep H$ arbitrarily. 
Then, there is $f_*\pi \in \Rep G$ defined by $f_*\pi = (\cH_{f_*\pi}, V_{f_*\pi}) := (\cH_\pi, (\id_{\cB(\cH_\pi)}\otimes f)(V_\pi))$. 
By definition, we have 
$\Hom_{\Rep H}(\pi,\tau)\subset\Hom_{\Rep G}(f_*\pi,f_*\tau)$ as subspaces of $\cB(\cH_\pi,\cH_\tau)$. 
Also, the identity map on $\cH_{(f_*\pi)\otimes (f_*\tau)} := \cH_{\pi}\otimes \cH_{\tau} = \cH_{\pi\otimes\tau} =: \cH_{f_*(\pi\otimes \tau)}$ induces a unitary isomorphism $(f_*\pi)\otimes (f_*\tau)\cong f_*(\pi\otimes\tau)$ in $\Rep G$. 
With these data, it is routine to check that we have a well-defined unitary tensor functor $f_*\colon \Rep H \ni \pi \mapsto f_*\pi \in \Rep G$. 

Similarly, take $\phi,\psi\in\Rep\Gamma$ arbitrarily. 
Then, there is $f^*\phi\in \Rep \Lambda$ uniquely determined by $(\cV_{f^*\phi},\Pi_{f^*\phi}) := (\cV_\phi,\Pi_{\phi}\circ f)$. 
By definition, we have 
$\Hom_{\Rep \Gamma}(\phi,\tau)\subset\Hom_{\Rep \Lambda}(f^*\phi,f^*\psi)$ as subspaces of $\cB(\cV_\phi,\cV_\psi)$. 
Also, the identity map on $\cV_{(f^*\phi)\otimes (f^*\psi)} := \cV_{\phi}\otimes \cV_{\psi} = \cV_{\phi\otimes\psi} =: \cV_{f^*(\phi\otimes \psi)}$ induces a unitary isomorphism $(f^*\phi)\otimes (f^*\psi)\cong f^*(\phi\otimes\psi)$ in $\Rep\Lambda$. 
With these data, it is routine to check that we have a well-defined unitary tensor functor $f^*\colon \Rep \Gamma \ni \phi \mapsto f^*\phi \in \Rep \Lambda$. 

If we are further given a compact quantum group $F$ and a $*$-bialgebra map $g\colon \cO(G)\to \cO(F)$, then we have natural unitary monoidal isomorphisms $(gf)_* \cong g_*f_* \colon \Rep H\to \Rep F$ and $(gf)^* \cong f^*g^*\colon \Rep \wc{F}\to \Rep \Lambda$. 

\begin{eg}\label{eg_bialg_hom_counit}
	Let $H,G$ be compact quantum groups and $f\colon\cO(H)\to \cO(G)$ be a $*$-bialgebra map. 
	\begin{enumerate}[leftmargin=*,label=(\arabic*)]
		\item\label{item_eg_bialg_hom_counit_1}
		The restriction of the counit is a $*$-bialgebra map $\epsilon_H\colon \cO(H)\to \bC$, where we regard $\bC$ as continuous functions on the trivial group $\{1\}$ equipped with the trivial comultiplication. 
		\item\label{item_eg_bialg_hom_counit_2}
		Note that $(\id_{\cK(\cH)}\otimes \epsilon_F)(U)=1_{\cB(\cH)}$ for any unitary representation $(\cH,U)$ of a locally compact quantum group $F$ since a unitary representation of the trivial group is trivial. 
		In particular, $(\epsilon_G)_*\colon \Rep G\to \Rep \{1\}=\Hilb$ and $(\epsilon_G)^*\colon\Rep\Gamma\to \Rep\wc{\{1\}}=\Hilb$ are nothing but forgetful functors. 
		Thus, compositions of $f_*$ and $f^*$ with the forgetful functors $\Rep H\to \Hilb$ and $\Rep \Gamma\to \Hilb$, respectively, are naturally unitarily monoidally isomorphic to the forgetful functors from $\Rep G$ and $\Rep\Lambda$, respectively. 
	\end{enumerate}
\end{eg}

\begin{rem}\label{rem_bialg_hom_inj}
	Let $H,G$ be compact quantum groups and $f\colon\cO(H)\to \cO(G)$ be an injective $*$-bialgebra map. Then, by universality, $f$ uniquely extends to the unital $*$-homomorphism denoted by $f^u\colon C^u(H)\to C^u(G)$, which satisfies $\Delta^u_G f^u= (f^u\otimes f^u)\Delta^u_H$. We claim that $f^u$ is injective. 
	
	Note that for all $x\in\cO(H)$, since 
	\begin{align*}
		&
		f(\id_{\cO(H)}\otimes h_Gf) \Delta_H(x) = (\id_{\cO(G)}\otimes h_G) \Delta_Gf(x) = f (h_{G}f(x) 1_{\cO(H)}) , 
	\end{align*}
	it follows from the injectivity of $f$ that 
	\begin{align*}
		h_Gf(x) ={}& h_H( h_Gf(x) 1_{\cO(H)} ) = h_H(\id_{\cO(H)}\otimes h_Gf) \Delta_H(x) 
		\\={}& h_H(x) h_Gf(1_{\cO(H)}) = h_H(x) . 
	\end{align*}
	Thus, $f$ uniquely extends to a unital normal $*$-homomorphism $L^\infty(H)\to L^\infty(G)$ denoted by $\overline{f}$, which satisfies $\Delta_G \overline{f} = (\overline{f}\otimes\overline{f}) \Delta_H$ by the $\sigma$-weak density. 
	In this case, there is the well-defined surjective $*$-homomorphism $f^{\vee}\colon c_0(\Gamma)\to c_0(\Lambda)$ determined by the relation $(f^{\vee}\otimes\id_{C(G)})(V^G) = (\id_{c_0(\Lambda)}\otimes f)(V^H)$, and $f^\vee$ satisfies $\Delta_{\Lambda}f^{\vee}=(f^{\vee}\otimes f^{\vee})\Delta_{\Gamma}$ by \cite[Theorem 3.5, Theorem 3.6]{Daws-Kasprzak-Skalski-Soltan2012closed} and the coamenability of $\Gamma$ and $\Lambda$. 
	Then, $f^\vee$ uniquely extends to a surjective unital normal $*$-homomorphism $\overline{f}^{\vee}\colon \ell^\infty(\Gamma) = \cM(c_0(\Gamma))\to \cM(c_0(\Lambda)) = \ell^\infty(\Lambda)$ such that $\Delta_{\Lambda}\overline{f}^{\vee}=(\overline{f}^{\vee}\otimes \overline{f}^{\vee})\Delta_{\Gamma}$. 
	Now, it follows from \cite[Theorem 5.2]{Kalantar-Kasprzak-Skalski2016open} that $f^u\colon C^u(H)\to C^u(G)$ is injective as claimed. 
\end{rem}

\begin{rem}\label{rem_bialg_hom_surj}
	Let $H,G$ be compact quantum groups and $r\colon\cO(H)\to \cO(G)$ be a surjective $*$-bialgebra map. 
	\begin{enumerate}[leftmargin=*,label=(\arabic*)]
		\item\label{item_rem_bialg_hom_surj_1}
		Then, for any $\pi\in\Irr G$, there is $\wt{\pi}\in\Irr H$ such that $\pi\leq\wt{\pi}$. Indeed, by the surjectivity of $r$, there is some $\tau\in\Rep H$ such that $\{(\omega\otimes\id_{C(G)})(V_\pi) \mid \omega \in\cB(\cH_\pi)_*\} \subset \{(\omega\otimes r)(V_\tau) \mid \omega \in\cB(\cH_\tau)_*\}$, and by Schur's orthogonality relation, we see $\Hom_{\Rep G}(\pi,r_*\tau) \neq 0$, which means $\Hom_{\Rep G}(\pi,r_*\wt{\pi}) \neq 0$ for some $\wt{\pi} \in\Irr H$ appearing the irreducible decomposition of $\tau$. 
		\item\label{item_rem_bialg_hom_surj_2}
		Also, $r$ is bijective if and only if $r_*\colon\Rep H\to \Rep G$ is fully faithful. The only if part is clear. Conversely, if $r_*\colon\Rep H\to \Rep G$ is fully faithful, then it is a unitarily monoidal equivalence as it is essentially surjective by \ref{item_rem_bialg_hom_surj_1}. Since $r_*$ is compatible with the forgetful functor, the quasi-inverse of $r_*$ induces the inverse of $r$ via the Tannaka--Krein duality. 
		\item\label{item_rem_bialg_hom_surj_3}
		If $r$ is bijective, then it uniquely extends to a $*$-isomorphism $C^u(H)\to C^u(G)$ and to a normal $*$-isomorphism $L^\infty(H)\to L^\infty(G)$, where the latter restricts to a $*$-isomorphism $C(H)\to C(G)$ (cf.\ \autoref{rem_bialg_hom_inj}). These $*$-isomorphisms preserve the comultiplication by the appropriate density of $\cO(H)$. In particular, $H\cong G$. 
	\end{enumerate}
\end{rem}

\section{Separation by unitary representations}\label{sec_separateDQG}
As noted in \autoref{notation_GGamma}, we promise that $\Gamma=\wc{G}$ and $\Lambda=\wc{H}$ when $G$ and $H$ are compact quantum groups. 

\subsection{An analog of faithfulness}\label{ssec_separateDQG}

For our approach to outer actions on the Jiang--Su algebra of a discrete quantum group $\Gamma$, we need a finite dimensional unitary representation of $\Gamma$ that is faithful in appropriate sense. 
With this in mind, we consider the following property. 

\begin{df}\label{def_tensor_faith}
	Let $G$ be a compact quantum group and $\theta \in\Rep\Gamma$. 
	We write $\Pi_{\theta}^{(\infty)}:=\prod_{n=1}^{\infty}\Pi_{\theta^{\otimes n}} \colon \cO(G) \to \prod_{n=1}^{\infty}\cB(\cV_{\theta^{\otimes m}})$. 
	We say that $\theta$ is \emph{tensorially faithful} if $\Pi_{\theta}^{(\infty)}$ is injective. 
\end{df}

\begin{eg}\label{eg_tensor_faith_finite}
	Let $G$ be a finite quantum group with $\Gamma=\wc{G}$. 
	Then, $\rho:=(L^2(G),V^{G}_{21})\in\Rep \Gamma$ is tensorially faithful because we have $\Ker\Pi_\rho=0$ by the definitions of $\cO(G)$ and $\Pi_\rho$. 
\end{eg}

\begin{eg}\label{eg_tensor_faith_Lie}
	Let $G$ be a compact Hausdorff group. For each $g\in G$, we define the $*$-homomorphism $\ev_g\colon C(G)\to\bC$ by the evaluation at each $g\in G$. Then, by the Gelfand duality for $C^u(G)=C(G)$, $(\cV_\theta,\Pi_\theta)$ for any $\theta\in\Rep\Gamma$ is unitarily equivalent to $\bigoplus_{i=1}^{n}(\bC,\ev_{g_i})$ for some $g_1,\cdots,g_n\in G$. 
	Then, $\theta\in\Rep G$ is tensorially faithful if and only if the subsemigroup generated by $g_1,\cdots,g_n$ is dense in $G$. 
	This is equivalent to saying that the subgroup generated by $g_1,\cdots,g_n$ is dense in $G$ since a closed subsemigroup in a compact Hausdorff group is a subgroup (see e.g., \cite[Example 1.1.2]{Neshveyev-Tuset-book}). 
	
	Note that any compact Lie group $G$ has a finite subset $F\subset G$ that generates a dense subgroup of $G$. 
		Indeed, inductively on $k$, we can take the closures $T_k$ of some $1$-parameter subgroups in $G$ such that the closed subgroup $G_k$ of $G$ generated by $T_1,\cdots,T_k$ satisfies $\dim G_k\geq \min\{ k, \dim G \}$. We take a subset $F_0\subset G$ of representatives for the classes in $G/G_{\dim G}$, which is finite, and for each $k=1,\cdots,\dim G$, a finite subset $F_k\subset T_k$ that generates a dense subgroup of $T_k$. Then, the finite subset $F:=\bigcup_{k=0}^{\dim G}F_k$ generates a dense subgroup of $G$. 
	Thus, there is $\theta\in\Rep\Gamma$ that is tensorially faithful. 
\end{eg}

\begin{rem}\label{rem_tensor_faith_op}
	For a compact quantum group $G$, 
	the existence of $\theta\in\Rep\Gamma$ that is tensorially faithful is equivalent to the existence of $\theta\in\Rep{\Gamma}^{\op}$ that is tensorially faithful. 
	Indeed, $\prod_{k=1}^{\infty} \Pi_{\theta}^{\otimes k}\Delta_G^{(k-1)}$ is injective on $\cO(G)$ if and only if $\prod_{k=1}^{\infty} \Pi_{\theta}^{\otimes k}\Delta_{G^{\op}}^{(k-1)}$ is injective on $\cO(G^{\op})=\cO(G)$. 
	Here, we let $\Delta_{G}^{(0)} := \id_{C(G)}$ and 
	$\Delta_{G}^{(n)} := (\Delta_{G}^{(n-1)}\otimes\id_{C(G)}) \Delta_{G}$ inductively on $n\in\bZ_{\geq 1}$. 
\end{rem}

\begin{rem}\label{rem_RFD_Kac}
	Let $G$ be a compact quantum group. 
	We say that $\Gamma$ is \emph{RFD} if for any $0\neq a\in\cO(G)$, there are $n\in\bZ_{\geq 1}$ and a unital $*$-homomorphism $\Pi\colon \cO(G)\to \bM_n$ with $\Pi(a)\neq0$. 
	If $\Gamma$ is RFD, then the compact quantum group $G$ is of Kac type (i.e., the Haar state is tracial) by \cite[Corollary A.3]{Soltan2005quantum} and the discussion after \cite[Corollary 2.12]{Chirvasitu2015residually}. 
	Note that $\Gamma$ is RFD if and only if $\bigcap_{\theta\in\Rep\Gamma}\Ker\Pi_\theta =0$. 
	In particular, if there is $\theta\in\Rep \Gamma$ that is tensorially faithful, then $\Gamma$ is RFD and thus $G$ is of Kac type. 
	
	If $G$ is a compact quantum group such that $\Gamma$ is RFD, then for any $\pi\in\Rep G$, we have that the transposition of $V_\pi$ with respect to some fixed basis of $\cH_\pi$, denoted by $V_\pi^{\top}$, is also a unitary because $G$ is of Kac type. Then, for any $\theta\in\Rep\Gamma$, we have that $V_{\pi,\theta}$ is a biunitary. Here, for $m,n\in\bZ_{\geq 1}$ and $U\in\cU(\bM_m\otimes\bM_n)$, we say that $U$ is a \emph{biunitary} if, when we express $U=(U_{i,j})_{i,j}$ using $U_{i,j}\in\bM_m$ for $i,j\in\{1,\cdots,n\}$, then $(U_{j,i})_{i,j}$ is also a unitary in $\bM_m\otimes\bM_n$. Note that $U$ is a biunitary if and only if $(U_{i,j}^{\top})_{i,j} = ((U_{j,i})_{i,j})^{\top}$ is a unitary. 
\end{rem}

For a compact quantum group $G$, we recall that $\cO(G)$ is a Hopf algebra with the canonically defined antipode denoted by $S^G\colon \cO(G)\to \cO(G)$. 
From the general theory of Hopf algebras, 
each of 
\begin{align}\label{eq_fact_Hopf_cancellative}
	&
	(\cO(G)\otimes 1_{\cO(G)}) \Delta_{G}(\cO(G)) 
	\qquad \text{and} \qquad 
	(1_{\cO(G)}\otimes \cO(G)) \Delta_{G}(\cO(G)) , 
\end{align}
linearly spans $\cO(G) \odot \cO(G)$. 
Also, for a compact group $H$ and a $*$-bialgebra map $f\colon\cO(G)\to \cO(H)$, we have 
\begin{align}\label{eq_fact_Hopf_antipodehom}
	fS^G = S^H f. 
\end{align}
A \emph{Hopf $*$-ideal} of $\cO(G)$ is a $*$-ideal $I\subset\cO(G)$ such that $\Delta_{G}(I) \subset I\odot\cO(G) + \cO(G)\odot I$ and $S^G(I)\subset I$. 
In \cite{Banica-Bichon2010Hopf,Brannan-Chirvasitu-Freslon2020topological}, $\theta\in\Rep\Gamma$ is called \emph{inner faithful} if no non-zero Hopf $*$-ideal of $\cO(G)$ is contained in $\Ker\Pi_\theta$. 
Then, \autoref{lem_tensor_faith} below implies that $\theta$ is inner faithful if and only if $\theta$ is tensorially faithful.

\begin{lem}\label{lem_tensor_faith}
	Let $G$ be a compact quantum group and $\theta=(\cV_\theta,\Pi_\theta)\in\Rep\Gamma$. 
	\begin{enumerate}[leftmargin=*,label=(\arabic*)]
		\item\label{item_lem_tensor_faith_ker}
		$\Ker\Pi_{\theta}^{(\infty)}$ is a Hopf $*$-ideal of $\cO(G)$ satisfying $\Ker\Pi_{\theta}^{(\infty)} \subset \Ker\Pi_\theta$. 
		\item\label{item_lem_tensor_faith_max}
		If $I$ is a $*$-ideal of $\cO(G)$ satisfying $I\subset \Ker\Pi_\theta$ and $\Delta_{G}(I) \subset I\odot \cO(G) + \cO(G)\odot I$, then $I\subset \Ker\Pi_{\theta}^{(\infty)}$. 
		\item\label{item_lem_tensor_faith_quot}
		There is a compact quantum group $G_\theta$ with a surjective $*$-bialgebra map $r_\theta\colon \cO(G)\to \cO(G_\theta)$ satisfying $\Ker r_\theta = \Ker\Pi_{\theta}^{(\infty)}$. 
	\end{enumerate}
\end{lem}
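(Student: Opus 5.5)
The plan is to prove \ref{item_lem_tensor_faith_max} first, by a direct induction, and then to do \ref{item_lem_tensor_faith_ker} and \ref{item_lem_tensor_faith_quot} together by identifying $\cO(G)/\Ker\Pi_\theta^{(\infty)}$ with a CQG algebra. Throughout write $K:=\Ker\Pi_\theta^{(\infty)}=\bigcap_{n\geq1}\Ker\Pi_{\theta^{\otimes n}}$. The basic identity I will use is
\begin{align*}
\Pi_{\theta^{\otimes(m+n)}}=(\Pi_{\theta^{\otimes m}}\otimes\Pi_{\theta^{\otimes n}})\Delta_G ,
\end{align*}
which follows from $(\Pi_\phi\otimes\Pi_\psi)\Delta_G=\Pi_{\phi\otimes\psi}$ in \autoref{ssec_prelim_Rep}.

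For \ref{item_lem_tensor_faith_max}, I show $I\subset\Ker\Pi_{\theta^{\otimes n}}$ by induction on $n$. The case $n=1$ is the hypothesis $I\subset\Ker\Pi_\theta$. For the inductive step, take $x\in I$ and write $\Delta_G(x)\in I\odot\cO(G)+\cO(G)\odot I$. Then
\begin{align*}
\Pi_{\theta^{\otimes(n+1)}}(x)=(\Pi_{\theta^{\otimes n}}\otimes\Pi_\theta)\Delta_G(x)=0,
\end{align*}
since $\Pi_{\theta^{\otimes n}}$ kills $I$ by the induction hypothesis and $\Pi_\theta$ kills $I$ by assumption.

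For \ref{item_lem_tensor_faith_ker}, $K$ is a $*$-ideal because it is an intersection of kernels of $*$-homomorphisms, and $K\subset\Ker\Pi_\theta$ by taking $n=1$.

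\emph{Coideal property.} Take $x\in K$. By the identity above, $(\Pi_{\theta^{\otimes m}}\otimes\Pi_{\theta^{\otimes n}})\Delta_G(x)=0$ for all $m,n$. This says that $(\Pi_\theta^{(\infty)}\odot\Pi_\theta^{(\infty)})(\Delta_G(x))$ is an element of the algebraic tensor product $P\odot P$, with $P:=\prod_n\cB(\cV_{\theta^{\otimes n}})$, all of whose coordinate projections $p_m\otimes p_n$ vanish. Such an element is $0$, because a finite sum in $P\odot P$ is detected by the product functionals coming from the coordinates. By linear algebra, the kernel of the tensor product $f\odot g$ of two linear maps is $\Ker f\odot V+V\odot\Ker g$. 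Hence $\Delta_G(x)\in K\odot\cO(G)+\cO(G)\odot K$.

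\emph{Quotient algebra.} Consequently $A:=\cO(G)/K$ is a unital $*$-bialgebra. It embeds injectively into $P$ via the map induced by $\Pi_\theta^{(\infty)}$. Taking the norm closure of its image gives a C*-algebra in which $A$ is dense. Moreover $A$ is spanned by the matrix coefficients of the images of the unitaries $V_\pi$, for $\pi\in\Irr G$.

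\emph{Main step: $A$ is a CQG algebra.} I expect this to be the main obstacle, since a bi-ideal of a Hopf algebra need not be stable under the antipode in general. To handle it, I would verify the hypotheses of Woronowicz's characterization (in the form of Dijkhuizen--Koornwinder's CQG algebras). These are that $A$ is spanned by coefficients of finite-dimensional unitary corepresentations, and that each such corepresentation has a conjugate that is invertible. The images of $V_\pi$ are unitary in $\cB(\cH_\pi)\odot A$, since the quotient map is a $*$-homomorphism. The conjugate corepresentation $\overline{V_\pi}$ is equivalent in $\cO(G)$ to a unitary corepresentation via a positive invertible intertwiner, and this equivalence persists after passing to the quotient. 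This gives a compact quantum group $G_\theta$ with $\cO(G_\theta)=A$ and a surjective $*$-bialgebra map $r_\theta\colon\cO(G)\to\cO(G_\theta)$ with $\Ker r_\theta=K$, proving \ref{item_lem_tensor_faith_quot}. Finally, \eqref{eq_fact_Hopf_antipodehom} gives $r_\theta S^G=S^{G_\theta}r_\theta$, so $S^G(K)\subset K$, and $K$ is a Hopf $*$-ideal, completing \ref{item_lem_tensor_faith_ker}.
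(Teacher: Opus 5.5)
You take a different, purely algebraic route from the paper's C*-route, and most of it is fine. Your proof of (2) is correct. So is your direct proof that $\Delta_G(K)\subset K\odot\cO(G)+\cO(G)\odot K$, which is the algebraic counterpart of the paper's check that $\Delta$ is well defined on $\Pi_\theta^{(\infty)}(C^u(G))$. The final deduction of $S^G(K)\subset K$ from $r_\theta S^G=S^{G_\theta}r_\theta$ is the paper's own.

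\textbf{The gap.} It is the sentence ``Consequently $A$ is a unital $*$-bialgebra''. For the counit to pass to $A=\cO(G)/K$ you need $\epsilon_G(K)=0$.
\begin{itemize}
\item This is not a formal consequence of the comultiplication condition: the ideal $\cO(G)$ itself satisfies that condition.
\item It is also not obvious here, since no tensor power of $\theta$ need contain $\mathbbm{1}$. For example, take the character $n\mapsto z^n$ of $\bZ$ with $z$ not a root of unity.
\item The Dijkhuizen--Koornwinder-type characterization you want to apply is formulated for $*$-bialgebras, i.e.\ it presupposes this counit.
\end{itemize}
So as written, your main step does not yet go through.

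\textbf{Repair (a): follow the paper.} Woronowicz's C*-algebraic theorem needs no counit.
\begin{itemize}
\item Cancellation already follows from unitarity of the images $\dot v_{ij}\in A$ of the coefficients of $V_\pi$. For example, $\sum_k(\dot v_{ki}^*\otimes1)\Delta(\dot v_{kj})=1\otimes\dot v_{ij}$, so invertibility of conjugates is not even needed.
\item You must then check that $\Delta$ extends to the norm closure of $\Pi_\theta^{(\infty)}(\cO(G))$. It does, because $(\Pi_{\theta^{\otimes m}}\otimes\Pi_{\theta^{\otimes n}})\Delta_G=\Pi_{\theta^{\otimes(m+n)}}$ gives $\sup_{m,n}\|\Pi_{\theta^{\otimes(m+n)}}(x)\|\leq\|\Pi_\theta^{(\infty)}(x)\|$.
\item You must also show that $A$ is all of $\cO$ of the resulting compact quantum group, via Schur orthogonality plus density.
\end{itemize}
The counit, and hence $\epsilon_G(K)=0$, then come for free.

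\textbf{Repair (b): stay algebraic.}
\begin{itemize}
\item For $\pi\in\Rep G$, let $W_\pi\subset\cB(\cH_\pi)$ be the span of all slices $(\id\otimes\phi)(V_{\pi,\theta^{\otimes n}})$, $n\geq1$. Since $V_{\pi,\theta^{\otimes(m+n)}}=(V_{\pi,\theta^{\otimes m}})_{12}(V_{\pi,\theta^{\otimes n}})_{13}$, this is a subalgebra.
\item $V_{\pi,\theta}$ is an invertible element of the finite-dimensional algebra $W_\pi\otimes\cB(\cV_\theta)$. By its minimal polynomial, $1\in W_\pi\otimes\cB(\cV_\theta)$, i.e.\ $1\in W_\pi$.
\item An element $x=(\omega\otimes\id)(V_\pi)$ lies in $K$ if and only if $\omega$ annihilates $W_\pi$, and $\epsilon_G(x)=\omega(1)$. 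Hence $\epsilon_G(K)=0$.
\item The same argument applied to $V_{\pi,\theta^{\otimes n}}^{-1}=V_{\pi,\theta^{\otimes n}}^*$ gives $S^G(K)\subset K$ directly, since $(\id\otimes S^G)(V_\pi)=V_\pi^*$.
\end{itemize}
Then $A$ is a Hopf $*$-algebra spanned by unitary matrix coefficients, i.e.\ a CQG algebra by definition, and the obstacle you anticipated disappears.
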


\begin{proof}
	We define $\Delta_{G}^{u(0)} := \id_{C(G)}$ and 
	$\Delta_{G}^{u(n)} := (\Delta_{G}^{u(n-1)}\otimes\id_{C^u(G)}) \Delta_{G}^u$ inductively on $n\in\bZ_{\geq 1}$. 
	By definition, $\Ker\Pi_{\theta}^{(\infty)} = \bigcap_{n=1}^{\infty} \Ker\Pi_{\theta^{\otimes n}}$. It follows that $\Ker\Pi_{\theta}^{(\infty)}$ is a $*$-ideal of $\cO(G)$ contained in $\Ker\Pi_\theta$. 
	Consider any $*$-ideal $I\subset\cO(G)$ with $I\subset\Ker\Pi_\theta$ and $\Delta_{G}(I)\subset I\odot\cO(G) + \cO(G)\odot I$. 
	Since for any $n\in\bZ_{\geq 1}$, we have 
	\begin{align*}
		&
		\Delta_{G}^{(n)}(I) 
		\subset 
		\sum_{k=0}^{n} \cO(G)^{\odot k} \odot I \odot \cO(G)^{\odot n-k} 
		\subset 
		\Ker(\Pi_\theta^{\otimes n+1}) , 
	\end{align*}
	we see that $I\subset \Ker ( \Pi_\theta^{\otimes n+1} \Delta_{G}^{(n)} ) = \Ker\Pi_{\theta^{\otimes n+1}}$. 
	Thus, $I\subset\bigcap_{n=1}^{\infty} \Ker\Pi_{\theta^{\otimes n}} = \Ker\Pi_{\theta}^{(\infty)}$, which implies \ref{item_lem_tensor_faith_max}. 
	
	Next, we show \ref{item_lem_tensor_faith_quot}. 
	For each $\phi\in\Rep\Gamma$, by universality, $\Pi_{\phi}$ and $\Pi_{\phi}^{(\infty)}$ uniquely extend to the unital $*$-homomorphisms still denoted by $\Pi_\phi\colon C^u(G)\to \cB(\cV_\phi)$ and $\Pi_\phi^{(\infty)}\colon C^u(G)\to \prod_{k=1}^{\infty} \cB(\cV_\phi^{\otimes k})$. 
	For all $m,n\in\bZ_{\geq 1}$, we have 
	\begin{align*}
		&
		(\Pi_{\theta^{\otimes m}}\otimes \Pi_{\theta^{\otimes n}}) \Delta_{G}^{u} 
		= \Pi_\theta^{\otimes m+n} \Delta_{G}^{u(m+n-1)} 
		\colon 
		C^u(G)\to \cB(\cV_\theta)^{\otimes m+n}=\cB(\cV_{\theta^{\otimes m+n}}) 
	\end{align*}
	since they coincide on $\cO(G)$. 
	Thus, we have the well-defined unital $*$-homomorphism 
	\begin{align*}
		&
		\Delta\colon \Pi_{\theta}^{(\infty)}(C^u(G)) 
		\to 
		\prod_{m,n=1}^{\infty} \cB(\cV_{\theta^{\otimes m}}) \otimes \cB(\cV_{\theta^{\otimes n}}) 
		\subset 
		\cB\biggl( \bigoplus_{m=1}^{\infty} \cV_{\theta^{\otimes m}} \biggr)^{\barotimes 2} 
	\end{align*}
	such that $\Delta\Pi_{\theta}^{(\infty)} = (\Pi_{\theta}^{(\infty)} \otimes \Pi_{\theta}^{(\infty)}) \Delta_G^u$. 
	The image of $\Delta$ is contained in the norm-closure of $\Pi_{\theta}^{(\infty)}(\cO(G))\odot\Pi_{\theta}^{(\infty)}(\cO(G))$, which is the spatial tensor product $\Pi_{\theta}^{(\infty)}(C^u(G))\otimes \Pi_{\theta}^{(\infty)}(C^u(G)) \subset \cB( \bigoplus_{m=1}^{\infty} \cV_{\theta^{\otimes m}} )^{\barotimes 2}$. 
	By \eqref{eq_fact_Hopf_cancellative}, we have 
	\begin{align*}
		&
		\cspan (\Pi_{\theta}^{(\infty)}(C^u(G))\otimes 1) \Delta(\Pi_{\theta}^{(\infty)}(C^u(G))) 
		= \Pi_{\theta}^{(\infty)}(C^u(G)) \otimes \Pi_{\theta}^{(\infty)}(C^u(G)) 
		\\={}& 
		\cspan (1\otimes \Pi_{\theta}^{(\infty)}(C^u(G))) \Delta(\Pi_{\theta}^{(\infty)}(C^u(G))) .
	\end{align*}
	Then, the pair $(\Pi_{\theta}^{(\infty)}(C^u(G)),\Delta)$ is a compact quantum group possibly except for the faithfulness of the Haar state; 
	more precisely, if we define $\cO \subset \Pi_{\theta}^{(\infty)}(C^u(G))$ as the linear span of elements of the form $(\omega\otimes\id_{\Pi_{\theta}^{(\infty)}(C^u(G))})(U)$, where $\cH$ is a finite dimensional Hilbert space, $\omega\in\cB(\cH)_*$, and $U\in \cU(\cB(\cH)\otimes \Pi_{\theta}^{(\infty)}(C^u(G)))$ satisfies $(\id_{\cB(\cH)}\otimes\Delta)(U) = U_{12} U_{13}$, then it holds that $\cO \subset \Pi_{\theta}^{(\infty)}(C^u(G))$ is a well-defined norm-dense unital $*$-subalgebra with $\Delta(\cO)\subset\cO\odot\cO$ and that there are a compact quantum group $H$ (in our convention) and a unital surjective $*$-homomorphism 
	$\rho\colon \Pi_{\theta}^{(\infty)}(C^u(G))\to C(H)$ 
	such that $\Delta_{H} \rho = (\rho\otimes \rho) \Delta$ and that $\rho|_\cO$ is injective and satisfies $\rho(\cO)=\cO(H)$. 
	For details, see e.g., \cite[Chapter 1]{Neshveyev-Tuset-book}. 
	
	For any $\pi\in\Rep G$, we have $(\cH_\pi , (\id_{\cB(\cH_\pi)}\otimes\rho\Pi_{\theta}^{(\infty)})(V_\pi)) \in \Rep H$ by $\Delta_{H} \rho\Pi_{\theta}^{(\infty)} = (\rho\otimes\rho)\Delta\Pi_{\theta}^{(\infty)} = (\rho\Pi_{\theta}^{(\infty)}\otimes\rho\Pi_{\theta}^{(\infty)})\Delta_{G}$ on $\cO(G)$ and $\Pi_{\theta}^{(\infty)}(\cO(G)) \subset \cO$ by the definitions of $\cO$ and $\cO(G)$. 
	We have the well-defined $*$-bialgebra map $\rho \Pi_{\theta}^{(\infty)} \colon \cO(G)\to \cO(H)$ whose kernel equals $\Ker\Pi_{\theta}^{(\infty)}$. 
	Moreover, if $\rho \Pi_{\theta}^{(\infty)} ( \cO(G) ) \neq \cO(H)$, then there is $\tau=(\cH_\tau,V_\tau)\in \Irr H$ such that there is no $\pi\in\Irr G$ with $\tau\leq (\rho\Pi_{\theta}^{(\infty)})_*\pi$ by the definitions of $\cO(G)$ and $\cO(H)$. But Schur's orthogonality implies $h_H( x^* (\omega\otimes\id_{\cO(H)})(V_\tau) )=0$ for all $x\in\rho\Pi_\theta^{(\infty)}(\cO(G))$ and $\omega\in\cB(\cH_\tau)_*$, which contradicts the norm-density of $\rho\Pi_{\theta}^{(\infty)}(\cO(G))\subset \rho\Pi_{\theta}^{(\infty)}(C^u(G))= C(H)$. 
	Therefore, $G_\theta:=H$ with $r_\theta:=\rho\Pi_{\theta}^{(\infty)}$ satisfies the desired property. 
	
	Finally, \ref{item_lem_tensor_faith_ker} follows from \ref{item_lem_tensor_faith_quot} by \eqref{eq_fact_Hopf_antipodehom}. 
\end{proof}

\begin{eg}\label{eg_tensor_faith_group}
	When $\Gamma$ is a discrete group, a unitary representation $u\colon\Gamma\to\rU(n):=\cU(\cB(\bC^{\oplus n}))$ with $n\in\bZ_{\geq 1}$ is tensorially faithful in the sense above if and only if $\Ker u=\{1\}$. Indeed, since the kernel of $u^{\otimes k}\colon \Gamma\to \rU(n^k)$ equals $\Ker u$ for all $k\in\bZ_{\geq 1}$, in order for $u$ to be tensorially faithful, $\Ker u$ needs to be trivial. 
	The converse implication follows from \autoref{lem_tensor_faith} \ref{item_lem_tensor_faith_quot} since a compact quantum group $H$ with a surjective $*$-bialgebra map $\bC[\Gamma]\to\cO(H)$ satisfies $\Delta_{H}(x)_{21} = \Delta_{H}(x)$ for all $x\in\cO(H)$ and thus $\Lambda=\wc{H}$ is a quotient group of $\Gamma$. 
\end{eg}

Next, we show a few permanence properties. 

\begin{prop}\label{prop_tensor_faith}
	Let $n\in\bZ_{\geq 1}$ and $G_1,\cdots,G_n$, and $G$ be compact quantum groups. For each $i=1,\cdots,n$, we write $\Gamma_i:=\wc{G_i}$ and consider a surjective $*$-bialgebra map $r_i\colon \cO(G)\to\cO(G_i)$ such that 
	\begin{align}\label{eq_prop_tensor_faith}
		&
		\Hom_{\Rep G}(\pi, \varpi) = \bigcap_{i=1}^{n} \Hom_{\Rep G_i}({r_i}_*\pi, {r_i}_*\varpi) 
	\end{align}
	as subspaces of $\cB(\cH_\pi,\cH_\varpi)$ for all $\pi,\varpi\in\Rep G$. 
	If there is $\theta_i \in \Rep\Gamma_i$ that is tensorially faithful on $\cO(G_i)$ for all $i=1,\cdots,n$, then there is $\theta\in\Rep\Gamma$ that is tensorially faithful on $\cO(G)$. 
\end{prop}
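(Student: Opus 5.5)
The natural candidate is $\theta := \mathbbm{1}\oplus r_1^*\theta_1\oplus\cdots\oplus r_n^*\theta_n \in\Rep\Gamma$. Concretely, $\cV_\theta = \bC\oplus\bigoplus_i\cV_{\theta_i}$ and $\Pi_\theta = \epsilon_G\oplus\bigoplus_i \Pi_{\theta_i}\circ r_i$. The trivial summand is there so that $\theta^{\otimes m}$ contains $\theta^{\otimes k}$ for all $k\le m$, and also every mixed tensor product of the $r_i^*\theta_i$ of length at most $m$. The plan is to show that $\Ker\Pi_\theta^{(\infty)}=0$ by passing to the quotient quantum group of \autoref{lem_tensor_faith}\ref{item_lem_tensor_faith_quot}.

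\textbf{Step 1.} Let $r_\theta\colon\cO(G)\to\cO(G_\theta)$ be the surjective $*$-bialgebra map of \autoref{lem_tensor_faith}\ref{item_lem_tensor_faith_quot}, with $\Ker r_\theta=\Ker\Pi_\theta^{(\infty)}$. By \autoref{rem_bialg_hom_surj}\ref{item_rem_bialg_hom_surj_2}, $r_\theta$ is injective (and hence $\theta$ is tensorially faithful) as soon as $(r_\theta)_*\colon\Rep G\to\Rep G_\theta$ is full. So it suffices to show
\[
\Hom_{\Rep G_\theta}((r_\theta)_*\pi,(r_\theta)_*\varpi)\subset\Hom_{\Rep G}(\pi,\varpi).
\]
By \eqref{eq_prop_tensor_faith}, this reduces to showing that each $r_i$ factors through $r_\theta$. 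That is, we want $\Ker r_\theta\subset\Ker r_i$ for each $i$. Once this holds, we can write $r_i = s_i r_\theta$ for some surjective $*$-bialgebra map $s_i\colon\cO(G_\theta)\to\cO(G_i)$. Then any intertwiner for $(r_\theta)_*\pi,(r_\theta)_*\varpi$ is also an intertwiner for ${r_i}_*\pi,{r_i}_*\varpi$ for every $i$, and \eqref{eq_prop_tensor_faith} gives that it lies in $\Hom_{\Rep G}(\pi,\varpi)$.

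\textbf{Step 2.} To get $\Ker r_\theta\subset\Ker r_i$, take $x\in\Ker\Pi_\theta^{(\infty)}$. Since $r_i^*\theta_i\le\theta$, we get $(r_i^*\theta_i)^{\otimes k}\le\theta^{\otimes k}$, so $\Pi_{\theta_i^{\otimes k}}(r_i(x)) = \Pi_{(r_i^*\theta_i)^{\otimes k}}(x)=0$ for all $k$. Here I use $\Pi_{(r_i^*\theta_i)^{\otimes k}}=\Pi_{\theta_i}^{\otimes k}\Delta_{G}^{(k-1)}$ together with comultiplicativity of $r_i$, which gives $\Pi_{r_i^*(\theta_i^{\otimes k})}=\Pi_{\theta_i^{\otimes k}}\circ r_i$. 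Tensorial faithfulness of $\theta_i$ then gives $r_i(x)=0$. The trivial summand is not even needed for this step; summing the $r_i^*\theta_i$ suffices. I keep it only for safety.

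\textbf{Main obstacle.} The delicate point is making sure that the factorization $r_i=s_ir_\theta$ really yields a well-defined $*$-bialgebra map $s_i$. This should be fine: $\Ker r_\theta\subset\Ker r_i$ gives a unital $*$-homomorphism $s_i$ on $\cO(G_\theta)=r_\theta(\cO(G))$, and it is comultiplicative because $r_\theta\otimes r_\theta$ is surjective. One must also check that $(r_\theta)_*$ lands in honest unitary representations with intertwiner spaces contained in those of $(s_i)_*(r_\theta)_*\cong{r_i}_*$ as subspaces of $\cB(\cH_\pi,\cH_\varpi)$; this is immediate from the definitions in \autoref{ssec_prelim_hom}. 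Finally, applying \autoref{rem_bialg_hom_surj}\ref{item_rem_bialg_hom_surj_2} gives bijectivity of $r_\theta$, hence $\Ker\Pi_\theta^{(\infty)}=0$.
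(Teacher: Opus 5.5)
Your proposal is correct and is essentially the paper's proof: the paper takes $\theta=\bigoplus_{i=1}^n r_i^*\theta_i$ (your extra summand $\mathbbm{1}$ is harmless but, as you note, unnecessary) and deduces $\Ker\Pi_\theta^{(\infty)}\subset\Ker\Pi_{\theta_i}^{(\infty)}r_i=\Ker r_i$, so that each $r_i$ factors through $r_\theta$. It then squeezes $\Hom_{\Rep G_\theta}({r_\theta}_*\pi,{r_\theta}_*\varpi)$ between the two equal sides of \eqref{eq_prop_tensor_faith} to get full faithfulness of ${r_\theta}_*$, hence $\Ker r_\theta=0$ by \autoref{rem_bialg_hom_surj}~\ref{item_rem_bialg_hom_surj_2}. (Minor remark: comultiplicativity of the induced $s_i$ only uses surjectivity of $r_\theta$ itself, not of $r_\theta\otimes r_\theta$, and the intertwiner inclusion only needs $s_i$ to be a well-defined unital homomorphism.)
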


Following \cite{Brannan-Chirvasitu-Freslon2020topological}, we say that $G$ is \emph{topologically generated} by $G_1,\cdots,G_n$ if, together with a specific choice of $r_i$ for each $G_i$, \eqref{eq_prop_tensor_faith} holds for all $\pi,\varpi\in\Rep G$. 
Later, we will use the topological generation results from \cite{Brannan-Chirvasitu-Freslon2020topological,Chirvasitu2020topological} for quantum permutation groups and free unitary quantum groups (see \autoref{eg_tensor_faith_Sn+} and \autoref{eg_tensor_faith_Un+}). 

\begin{proof}
	We are going to show that $\theta:=\bigoplus_{i=1}^{n}r_i^*\theta_i$ is tensorially faithful on $\cO(G)$. Writing $\theta_0:=\theta$, $G_0:=G$, and $\Gamma_0:=\Gamma$, we consider the unital $*$-homomorphism 
	$\Pi_{\theta_i}^{(\infty)}\colon \cO(G_i)\to \prod_{m=1}^{\infty}\cB(\cV_{\theta_i^{\otimes m}})$ 
	for each $i=0,1,\cdots,n$. 
	There is a compact quantum group $G_{\theta}$ with a surjective $*$-bialgebra map $r_\theta\colon\cO(G)\to \cO(G_{\theta})$ with $\Ker r_\theta=\Ker\Pi_{\theta}^{(\infty)}$ as in \autoref{lem_tensor_faith}. 
	
	Note that $\Ker\Pi_{\theta_i}^{(\infty)}=0$ if $1\leq i\leq n$ by assumption. 
	Since $\Pi_{\theta}^{(\infty)}$ contains $\Pi_{r_i^* \theta_i}^{(\infty)} = \Pi_{\theta_i}^{(\infty)} r_i$ as a $*$-subrepresentation of $\cO(G)$ for each $1\leq i\leq n$, we have $\Ker\Pi_{\theta}^{(\infty)}\subset \Ker \Pi_{\theta_i}^{(\infty)} r_i = \Ker r_i$, and thus $r_i\colon \cO(G)\to\cO(G_i)$ goes through $r_\theta$ with some unital surjective $*$-bialgebra homomorphism $\cO(G_{\theta}) \to \cO(G_i)$. 
	Then, for all $\pi,\varpi\in\Rep G$, we have 
	\begin{align*}
		&
		\Hom_{\Rep G}(\pi,\varpi) \subset \Hom_{\Rep G_\theta}({r_\theta}_*\pi, {r_\theta}_*\varpi) \subset \bigcap_{i=1}^{n}\Hom_{\Rep G_i}({r_i}_*\pi,{r_i}_*\varpi) 
	\end{align*}
	as subspaces of $\cB(\cH_\pi,\cH_\varpi)$, but the leftmost and rightmost subspaces must coincide by assumption. This implies that the canonical unitary tensor functor $\Rep G\to \Rep G_\theta$ is fully faithful, and thus $\Ker r_\theta=0$ by \autoref{rem_bialg_hom_surj} \ref{item_rem_bialg_hom_surj_2}. Therefore, $0=\Ker \Pi_{\theta}^{(\infty)}$. 
\end{proof}

\begin{prop}\label{prop_tensor_faith_ext}
	Let $G$ and $H$ be compact quantum groups, $F$ be a finite quantum group, and $f\colon \cO(H)\to \cO(G)$ be an injective $*$-bialgebra map. 
	\begin{enumerate}[leftmargin=*,label=(\arabic*)]
		\item\label{item_prop_tensor_faith_ext_sub}
		If $\Gamma$ admits some tensorially faithful $\theta=(\cV_\theta,\Pi_\theta)\in\Rep\Gamma$, then so does $\Lambda$. 
		\item\label{item_prop_tensor_faith_ext_split}
		Suppose that there is a surjective $*$-bialgebra map $r\colon \cO(G)\to \cO(F)$ such that the sequence 
		\begin{align}\label{eq_prop_tensor_faith_ext}
			&
			\begin{tikzcd}[ampersand replacement=\&]
				\cO(H) \arrow[r, "f"] \& \cO(G) \arrow[r,"r"] \& \cO(F) 
			\end{tikzcd}
		\end{align}
		is a \emph{short exact sequence} in the sense that 
		\begin{align*}
			&
			\{ x\in\cO(G) \mid (\id_{\cO(G)}\otimes r)\Delta_{G}(x) = x\otimes1_{\cO(F)} \} = f(\cO(H)) .
		\end{align*}
		If $\Lambda$ admits some tensorially faithful $\theta=(\cV_\theta,\Pi_\theta)\in\Rep \Lambda$, then so does $\Gamma$. 
	\end{enumerate}
\end{prop}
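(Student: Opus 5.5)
For \ref{item_prop_tensor_faith_ext_sub}, I would take the pullback $f^*\theta\in\Rep\Lambda$, so that $\Pi_{f^*\theta}=\Pi_\theta f$. Since $f$ is comultiplicative, $\Delta_G^{(k)}f=f^{\otimes k+1}\Delta_H^{(k)}$. This gives
\begin{align*}
	\Pi_{(f^*\theta)^{\otimes k+1}} = \Pi_\theta^{\otimes k+1}f^{\otimes k+1}\Delta_H^{(k)} = \Pi_\theta^{\otimes k+1}\Delta_G^{(k)}f = \Pi_{\theta^{\otimes k+1}}f ,
\end{align*}
and hence $\Pi^{(\infty)}_{f^*\theta}=\Pi^{(\infty)}_\theta f$. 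This is a composition of two injective maps, so $f^*\theta$ is tensorially faithful. This part is routine.

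For \ref{item_prop_tensor_faith_ext_split}, the exactness of \eqref{eq_prop_tensor_faith_ext} says that $\wc{F}$ plays the role of a finite quotient $\Gamma/\Lambda$, i.e.\ $\Lambda$ has finite index in $\Gamma$. The plan is to induce $\theta$ up to $\Gamma$ and add the representation coming from $F$. Set $E:=(\id\otimes h_F r)\Delta_G\colon\cO(G)\to\cO(G)$. It is a unital completely positive idempotent whose image is the coinvariant algebra, namely $f(\cO(H))$ by exactness, and it is $f(\cO(H))$-bimodular. On $\cO(G)\odot\cV_\theta$ I would put the semi-inner product
\begin{align*}
	\langle a\otimes\xi,b\otimes\eta\rangle:=\langle\xi,\Pi_\theta(f^{-1}E(a^*b))\eta\rangle ,
\end{align*}
take the Hausdorff quotient $\cV_\phi$, and let $\cO(G)$ act by left multiplication; call the result $\phi$. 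Finite dimensionality of $\cV_\phi$ should follow from finite generation of $\cO(G)$ as a right $f(\cO(H))$-module. I expect this to come from the Hopf--Galois / faithful flatness theory of exact sequences with finite-dimensional quotient $\cO(F)$: $\cO(G)\cong f(\cO(H))\otimes\cO(F)$ as one-sided modules (Takeuchi, Schneider). Boundedness of the left action is automatic once $\cV_\phi$ is finite dimensional and the action is a $*$-representation. By construction $\xi\mapsto 1\otimes\xi$ is an isometry in $\Hom_{\Rep\Lambda}(\theta,f^*\phi)$. Constructing this induced representation is the step I expect to be the main obstacle, particularly the finite generation.

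Next I would set $\psi:=\phi\oplus r^*\rho$, where $\rho$ is the regular representation of \autoref{eg_tensor_faith_finite}, and put $I:=\Ker\Pi^{(\infty)}_\psi$. By \autoref{lem_tensor_faith}, $I$ is a Hopf $*$-ideal. Two facts hold:
\begin{itemize}[leftmargin=1.5em]
	\item $f^{-1}(I)\subset\Ker\Pi^{(\infty)}_\theta=0$, because $\Pi^{(\infty)}_{f^*\psi}=\Pi^{(\infty)}_\psi f$ by the computation in \ref{item_prop_tensor_faith_ext_sub} and $\Pi^{(\infty)}_{f^*\psi}$ contains $\Pi^{(\infty)}_\theta$;
	\item $I\subset\Ker r$, because $\Pi^{(\infty)}_{r^*\rho}=\Pi^{(\infty)}_\rho r$ and $\Pi_\rho$ is injective on $\cO(F)$.
\end{itemize}
Hence, with $r_\psi\colon\cO(G)\to\cO(G_\psi)$ from \autoref{lem_tensor_faith} \ref{item_lem_tensor_faith_quot}, the map $r_\psi f$ is injective and $r$ factors as $r=\bar r\, r_\psi$.

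It remains to show that $r_\psi$ is injective, by a five-lemma argument for the diagram with rows $\cO(H)\to\cO(G)\to\cO(F)$ and $\cO(H)\to\cO(G_\psi)\to\cO(F)$. First I would check that the bottom row is exact: a $\bar r$-coinvariant element of $\cO(G_\psi)$ lifts, after applying the conditional expectation $E$, to a coinvariant element of $\cO(G)$, which lies in $f(\cO(H))$. Then faithful flatness of $\cO(G)$ over $f(\cO(H))$ gives $\cO(G)\cong f(\cO(H))\otimes\cO(F)$ compatibly with $r_\psi$, and this forces $I=0$. Alternatively, for $x\in I$ one can use $(\id\otimes r)\Delta_G(x)\in I\odot\cO(F)$ and apply the Galois map $x\otimes y\mapsto x\Delta(y)$ to reduce to $I\cap f(\cO(H))=0$. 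Thus $\psi$ is tensorially faithful.
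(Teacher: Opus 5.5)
Part (1) and the skeleton of part (2) match the paper. In (2) you induce a tensorially faithful $\theta$ along $E=(\id\otimes h_F r)\Delta_G$, add $r^*\rho$, and show that $I=\Ker\Pi^{(\infty)}_\psi$ satisfies $I\subset\Ker r$ and $I\cap f(\cO(H))=0$; the paper does the same.

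\textbf{The gap: positivity of your induced form.} The map $\Pi_\theta f^{-1}$ is only a $*$-representation of the $*$-algebra $f(\cO(H))$. The Gram matrix $[E(a_i^*a_j)]_{i,j}$ is $E$ applied entrywise to a square in $M_n(\cO(G))$. It is positive in $M_n(C^u(G))$, but it is not visibly a sum of squares in $M_n(f(\cO(H)))$. To conclude $[\Pi_\theta f^{-1}E(a_i^*a_j)]_{i,j}\geq 0$ you need $\|\Pi_\theta f^{-1}(b)\|\leq\|b\|_{C^u(G)}$ for $b\in f(\cO(H))$. That is, you need injectivity of $f^u\colon C^u(H)\to C^u(G)$. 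This is exactly what the paper secures in \autoref{rem_bialg_hom_inj}, via Kalantar--Kasprzak--Skalski. Only then does it form the KSGNS space $C^u(G)\otimes_{\Pi_\theta E}\cV_\theta$ of the completely positive map $\Pi_\theta E\colon C^u(G)\to\cB(\cV_\theta)$.

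An algebraic fix is also available. For each $\pi\in\Irr F$, choose $\sigma\in\Irr G$ with $\overline{\pi}\leq r_*\sigma$ (\autoref{rem_bialg_hom_surj}) and use $V_\sigma^*V_\sigma=1$. This yields finitely many $u_\lambda\in\cO(G)$ with $c=\sum_\lambda u_\lambda E(u_\lambda^*c)$ for all $c$. The Gram matrix then equals $\sum_\lambda C_\lambda^*C_\lambda$ with the entries of $C_\lambda$ in $f(\cO(H))$, and finite generation follows at the same time. Calling $E$ ``UCP'' does not by itself produce a Hilbert space.

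\textbf{The other two steps.} You delegate these to Hopf--Galois theory that is only gestured at; the paper does both by hand.

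For finite generation, the paper writes $\epsilon_F=\sum_{\pi\in\Irr F}\dim_\bC\cH_\pi\sum_k h_F(v^{\pi*}_{k,k}\,\cdot\,)$. It then uses that $(\cO(G)\otimes 1)(\id\otimes r)\Delta_G(\cO(G))$ spans $\cO(G)\odot\cO(F)$ to obtain finitely many $a_\lambda,b_\lambda$ with $c=\sum_\lambda a_\lambda E(b_\lambda c)$.

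For the last step, your first version treats faithful flatness as if it gave a normal basis $\cO(G)\cong f(\cO(H))\otimes\cO(F)$ compatible with $r_\psi$. That is a strictly stronger statement, and you do not justify it. Your Galois-map version can be completed, but it needs two further facts: bijectivity of $\cO(G)\otimes_{f(\cO(H))}\cO(G)\to\cO(G)\otimes\cO(F)$, and flatness of $\cO(G)$ over $f(\cO(H))$. These are nontrivial theorems (Takeuchi, Schneider, Chirvasitu). They are not contained in the given notion of exactness, which only prescribes the coinvariants.

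None of this is needed. Use your own $E$: if $0\neq x\in I$, then $E(x^*x)\in I$, because $h_F r$ vanishes on $I$ and $\Delta_G(I)\subset I\odot\cO(G)+\cO(G)\odot I$. Moreover $E(x^*x)\in f(\cO(H))$, and $h_G(E(x^*x))=h_G(x^*x)>0$ by invariance and faithfulness of $h_G$. So $I\neq 0$ would force $I\cap f(\cO(H))\neq 0$, contradicting your first bullet. This is the paper's argument.
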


\begin{proof}
	We show \ref{item_prop_tensor_faith_ext_sub}. For $\theta\in\Rep\Gamma$ that is tensorially faithful, $f^*\theta\in\Rep\Lambda$ is tensorially faithful since $\Pi_{\theta^{\otimes n}}f=\Pi_{(f^*\theta)^{\otimes n}}$ for all $n\in\bZ_{\geq 1}$. 
	
	We show \ref{item_prop_tensor_faith_ext_split}. 
	By the universality, $f$ and $r$ uniquely extend to unital $*$-homomorphisms $f^u\colon C^u(H)\to C^u(G)$ and $r^u\colon C^u(G)\to C^u(F)$ respectively. Note that 
	$C^u(F)=\cO(F)$ by the finite dimensionality. Also, $f^u$ is injective by \autoref{rem_bialg_hom_inj}. 
	Since $h_{F}$ is a Haar state, we see that 
	\begin{align*}
		( \id_{C^u(G)} \otimes r^u) ( \Delta_{G}^u \otimes h_{F} r^u ) \Delta_{G}^u 
		={}& 
		( \id_{C^u(G)}\otimes (\id_{\cO(F)}\otimes h_{F})\Delta_{F} r^u ) \Delta_{G}^u 
		\\={}& 
		(\id_{C^u(G)}\otimes h_{F}r^u ) \Delta_{G}^u , 
	\end{align*}
	and that this map sends $\cO(G)$ to $f(\cO(H))$ and thus $C^u(G)$ to $f(C^u(H))$ by the short exactness of \eqref{eq_prop_tensor_faith_ext}. 
	Using the injectivity of $f^u$, we obtain the well-defined map $E\colon C^u(G)\to C^u(H)$ such that $f^u E = (\id_{C^u(G)}\otimes h_{F}r^u)\Delta_{G}^u$. 
	Then, $f^u E$ is a conditional expectation onto $f^u(C^u(H))$, and thus $E$ is completely positive with $Ef^u=\id_{C^u(H)}$. 
	Also, by construction, $E(x)\in\cO(H)$ for any $x\in\cO(G)$. 
	
	We claim that for any $*$-ideal $I\subset\cO(G)$ such that $0\neq I\subset \Ker r$ and that $\Delta_{G}(I) \subset I\odot \cO(G) + \cO(G)\odot I$, we have $I\cap f(\cO(H))\neq 0$. Indeed, if there is $0\neq x\in I$, then $0\neq fE(x^*x)\in f(\cO(H))$ since 
	\begin{align*}
		&
		h_{G}fE(x^*x) = (h_{G}\otimes h_{F} r)\Delta_{G}(x^*x) = h_{G}(x^*x) h_{F} r(1)  = h_{G}(x^*x) >0 
	\end{align*}
	by the faithfulness of $h_{G}$ on $C(G)\supset \cO(G)$. 
	Since $I\subset\Ker r$, 
	\begin{align*}
		fE(x^*x) 
		\in{}& 
		(\id_{\cO(G)}\otimes h_{F} r)\Delta_{G}(I) 
		\\\subset{}& 
		(\id_{\cO(G)}\otimes h_{F} r)(I \odot \cO(G) + \cO(G)\odot I) 
		\subset 
		I.
	\end{align*}
	Thus, $0\neq fE(x^*x) \in I\cap f(\cO(H))$ as claimed. 
	
	As in \autoref{eg_tensor_faith_finite}, we take $\rho\in \Rep \wc{F}$ such that $\Pi_\rho\colon \cO(F)\to \cB(\cV_\rho)$ coincides with the canonical inclusion $C(F)\to \cB(L^2(F))$. In particular, $\Ker\Pi_\rho=0$. 
	By assumption, there is $\phi\in\Rep \Lambda$ that is tensorially faithful. 
	If there is $\psi\in\Rep\Gamma$ such that $\phi\leq f^*\psi$, then by letting $\theta:= r^*\rho \oplus \psi$, 
	we see that $\Ker\Pi_{\theta}^{(\infty)}\subset\Ker r$ by $r^*\rho\leq \theta$. 
	Thus, by \autoref{lem_tensor_faith} \ref{item_lem_tensor_faith_ker}, the claim above shows that if $\Ker\Pi_{\theta}^{(\infty)}\neq 0$, then $\Ker\Pi_{\theta}^{(\infty)}\cap f(\cO(H))\neq 0$. However, since $\phi\leq f^*\psi \leq f^*\theta$, we have $\Ker\Pi_{\theta}^{(\infty)}\cap f(\cO(H)) \subset f (\Ker \Pi_{\phi}^{(\infty)} \cap\cO(H)) = 0$, which implies $\Ker\Pi_{\theta}^{(\infty)}=0$. Hence, $\theta$ must be tensorially faithful. 
	Therefore, it suffices to see that for any $\phi\in\Rep\Lambda$, there is $\psi\in\Rep\Gamma$ such that $\phi\leq f^*\psi$. 
	
	For $\pi=(\cH_\pi,V_\pi)\in\Rep F$, we take an orthonormal basis of $\cH_\pi$ to express $V_\pi=(v^{\pi}_{i,j})_{i,j}$ using $v^{\pi}_{i,j}\in \cO(G)$ for $i,j\in\{1,\cdots,\dim_\bC \cH_\pi\}$. By the definition of unitary representations, we have $\Delta_{F}(v^{\pi}_{i,j})=\sum_k v^{\pi}_{i,k}\otimes v^{\pi}_{k,j}$. 
	Since $F$ is of Kac type (cf.\ \autoref{rem_RFD_Kac}), Schur's orthogonality implies that $h_{F}(v^{\pi *}_{i,j}v^{\varpi}_{k,l}) = (\dim_\bC \cH_\pi)^{-1} \delta_{\pi,\varpi} \delta_{i,k} \delta_{j,l}$. 
	Also, we have $\cO(F) = \bigoplus_{\pi\in\Irr F}\bigoplus_{i,j=1}^{\dim_\bC\cH_\pi} \bC v^{\pi}_{i,j}$. 
	Then, it follows from the uniqueness of the counit $\epsilon_{F}$ that 
	\begin{align*}
		&
		\epsilon_{F}= \dim_\bC\cH_\pi\sum_{\pi\in \Irr F} \sum_{k=1}^{\dim_\bC\cH_\pi} h_{F}(v^{\pi *}_{k,k}(-)) 
	\end{align*}
	since for all $\tau\in\Irr F$ and $i,j\in \{ 1,\cdots,\dim_\bC\cH_\tau \}$, 
	\begin{align*}
		&
		\dim_\bC\cH_\pi
		\sum_{\pi\in \Irr F} \sum_{k=1}^{\dim_\bC\cH_\pi} (\id_{\cO(F)}\otimes h_{F}) \big( ( 1_{\cO(F)}\otimes v^{\pi *}_{k,k} ) \Delta_{F}(v^{\tau}_{i,j}) \big) 
		\\={}& 
		\dim_\bC\cH_\pi
		\sum_{\pi\in \Irr F} \sum_{k=1}^{\dim_\bC\cH_\pi} \sum_{l=1}^{\dim_\bC\cH_\tau} (\id_{\cO(F)}\otimes h_{F}) ( v^{\tau}_{i,l}\otimes v^{\pi *}_{k,k}v^{\tau}_{l,j} ) 
		= 
		v^{\tau}_{i,j} . 
	\end{align*}
	
	It follows from \eqref{eq_fact_Hopf_cancellative} and the surjectivity of $r$ that for each $\pi\in\Irr F$ and $k\in\{1,\cdots,\dim_\bC\cH_\pi\}$ there are $a_\lambda,b_\lambda\in \cO(G)$ for $\lambda$ in some finite index set $\Lambda_{\pi,k}$ such that 
	\begin{align*}
		&
		\dim_{\bC} \cH_\pi
		( 1_{\cO(G)}\otimes v^{\pi *}_{k,k} ) 
		= 
		\sum_{\lambda\in \Lambda_{\pi,k}} (a_\lambda\otimes 1_{\cO(F)}) (\id_{\cO(G)}\otimes r)\Delta_{G}(b_\lambda) . 
	\end{align*}
	Then, for any $c\in \cO(G)$, we have 
	\begin{align*}
		&
		\sum_{\pi\in \Irr F}\sum_{k=1}^{\dim_\bC\cH_\pi}\sum_{\lambda\in \Lambda_{\pi,k}}a_\lambda fE(b_\lambda c) 
		\\={}& 
		\sum_{\pi\in \Irr F}\sum_{k=1}^{\dim_\bC\cH_\pi}\sum_{\lambda\in \Lambda_{\pi,k}}a_\lambda (\id_{\cO(G)}\otimes h_{F}r)\Delta_{G}(b_\lambda c) 
		\\={}& 
		\dim_{\bC} \cH_\pi 
		\sum_{\pi\in \Irr F}\sum_{k=1}^{\dim_\bC\cH_\pi} (\id_{\cO(G)}\otimes h_{F}) \big( (1_{\cO(G)}\otimes v^{\pi *}_{k,k}) (\id_{\cO(G)}\otimes r)\Delta_{G}(c) \big) 
		\\={}& 
		(\id_{\cO(G)}\otimes \epsilon_{F} r)\Delta_{G}(c) 
		= 
		(\id_{\cO(G)}\otimes \epsilon_{G}) \Delta_{G}(c) 
		= 
		c , 
	\end{align*}
	where for the second last equality we have used $\epsilon_{F} r=\epsilon_{G}$ by the uniqueness of the counit. 
	Thus, $\cO(G)$ is generated by the finite subset $\{a_\lambda\}_{\pi,k,\lambda} \subset \cO(G)$ as a right $\cO(H)$-module. 
	
	We still write $\Pi_{\phi}\colon C^u(H)\to \cB(\cV_\phi)$ for the unique extension of $\Pi_\phi$ to a unital $*$-homomorphism. 
	Consider the KSGNS construction $\wt{\cV} := C^u(G)\otimes_{\Pi_\phi E}\cV_\phi$, on which we have the unital $*$-representation $\id_{\cO(G)}\otimes 1_{\cB(\cV_\phi)}$ of $\cO(G)$. 
	Since the image of $\cO(G)\otimes_{\cO(H)}\cV_\phi$ in $\wt{\cV}$ is norm-dense and $\cO(G)$ is a finitely generated right $\cO(H)$-module, we have $\dim_\bC\wt{\cV}<\infty$ and thus there is $\psi\in\Rep\Gamma$ with $\cV_\psi=\wt{\cV}$ and $\Pi_\psi = \id_{\cO(G)}\otimes 1_{\cB(\cV_\phi)}$. 
	Finally, since $Ef^u=\id_{C^u(H)}$, we have $\cV_\phi \cong C^u(H)\otimes_{\Pi_\phi} \cV_\phi \subset C^u(G)\otimes_{\Pi_\phi E}\cV_\phi$ as $*$-representations of $\cO(H)$, which implies $\phi\leq f^*\psi$ as desired. 
\end{proof}

\subsection{Example: free quantum groups}\label{ssec_freeQG}
We recall the definitions of several examples of compact quantum groups. 
Let $n\in \bZ_{\geq 1}$. 
\begin{itemize}[leftmargin=1.5em]
	\item
	We write $S_n^+$ for the quantum permutation group, which is described by $C^u(S_n^+) = C^*_{\rm univ}(p_{i,j} \mid i,j \in \{1,\cdots,n\}, p_{i,j}=p_{i,j}^*=p_{i,j}^2, \sum_{i=1}^{n}p_{i,j}=\sum_{i=1}^{n}p_{j,i}=1)$ and $\Delta_{S_n^+}(p_{i,j})=\sum_{k=1}^{n} p_{i,k}\otimes p_{k,j}$. 
	Then, $\cO(S_n^+)$ is the unital $*$-subalgebra generated by $\{p_{i,j}\}_{i,j}$. 
	\item
	We write $\rU^+(n)$ for the free unitary quantum group, which is described by $C^u(\rU^+(n)) = C^*_{\rm univ}(u_{i,j} \mid i,j \in \{1,\cdots,n\}, \sum_{k=1}^{n}u_{i,k}u_{j,k}^*=\sum_{k=1}^{n}u_{k,i}^*u_{k,j}=\delta_{i,j})$ and $\Delta_{\rU^+(n)}(u_{i,j})=\sum_{k=1}^{n} u_{i,k}\otimes u_{k,j}$. 
	Then, $\cO(\rU^+(n))$ is the unital $*$-subalgebra generated by $\{u_{i,j}\}_{i,j}$. 
	\item
	Let $\varsigma\in\{\pm1\}$ and $F=(f_{i,j})_{i,j}\in\bM_n$ such that $F\overline{F}=\varsigma1_{\bM_n}=\overline{F}F$, where $\overline{F}:=(f^*_{i,j})_{i,j}\in\bM_n$. We write $\rO^+(F)$ for the free orthogonal group, which is described by 
	$C^u(\rO^+(F)) = C^*_{\rm univ}( s_{i,j} \mid i,j \in \{1,\cdots,n\}, \sum_{k=1}^{n}s_{i,k}s_{j,k}^*=\sum_{k=1}^{n}s_{k,i}^*s_{k,j}=\delta_{i,j}, s_{i,j} = \varsigma \sum_{k,l=1}^{n}f_{i,k}s_{k,l}^*f^*_{l,j} )$ and $\Delta_{\rO^+(F)}(s_{i,j})=\sum_{k=1}^{n} s_{i,k}\otimes s_{k,j}$. 
	\item
	We write $\rO^+(n) := \rO^+(1_{\bM_n})$. 
	It holds that $\rO^+\big(\begin{smallmatrix}0 & 1_{\bM_n} \\ 1_{\bM_n} & 0\end{smallmatrix}\big)\cong \rO^+(2n)$ by \cite[Theorem 5.3]{Bichon-DeRijdt-Vaes2006ergodic} (see also \cite[Corollary 2.5.4 (ii), Proposition 2.5.6]{Neshveyev-Tuset-book}). 
\end{itemize}

\begin{eg}\label{eg_tensor_faith_Sn+}
	For $n\in\bZ_{\geq 1}$, there is $\theta=(\cV_\theta,\Pi_\theta)\in\Rep\wc{{S_n^+}}$ that is tensorially faithful. 
	When $n\in [1,5]\cup[10,\infty)$, this is due to \cite[Theorem 4.11]{Brannan-Chirvasitu-Freslon2020topological} in view of the remark before \autoref{lem_tensor_faith}. 
	Suppose $n\geq 5$. Then, by \cite[Corollary 3.4]{Brannan-Chirvasitu-Freslon2020topological}, $S_n^+$ is topologically generated by $S_n$ and $S^+_{n-1}$. 
	Here, the case $n=5$ is due to \cite{Banica2021homogeneous}, which relies on the classification of the standard invariant at index $5$ \cite{Izumi-Morrison-Penneys-Peters-Snyder2015subfactors} (this is the only place in this paper where we need the classification result of the standard invariant). 
	Thus, by using \autoref{prop_tensor_faith}, the claim follows inductively from the cases of $S_4^+$ as above and $S_n$ by \autoref{eg_tensor_faith_finite}. 
\end{eg}

\begin{eg}\label{eg_tensor_faith_Un+}
	For $n\in\bZ_{\geq 2}$, consider the free unitary quantum group $\rU^+(n)$. 
	By \cite[Theorem 3.4]{Chirvasitu2020topological}, $\rU^+(2)$ is topologically generated by $\rU(2)$ and the Pontryagin dual of $\bZ^{\ast 2}$. 
	Since there is an injective group homomorphism from $\bZ^{\ast 2}$ to some finite dimensional unitary group (for example, it is well-known from the argument of the Banach--Tarski paradox that there is an injective group homomorphism $\bZ^{\ast 2}\to\SO(3)\subset\rU(3)$), we see from \autoref{eg_tensor_faith_group}, \autoref{eg_tensor_faith_Lie}, and \autoref{prop_tensor_faith} that there is $\theta \in \Rep \wc{{\rU^+(n)}}$ that is tensorially faithful. 
	For $n\in\bZ_{\geq 3}$, it is shown in \cite[Proposition 2.1]{Chirvasitu2020topological} that $\rU^+(n)$ is topologically generated by $\rU(n)$ and $\rU^+(n-1)$. 
	Thus, using \autoref{eg_tensor_faith_Lie} and \autoref{prop_tensor_faith}, we inductively see that there is $\theta \in \Rep \wc{{\rU^+(n)}}$ that is tensorially faithful. 
	
	Here, note that the results of Chirvasitu \cite{Chirvasitu2020topological} we used do not rely on the classification of the standard invariant. Indeed, \cite[Theorem 3.4]{Chirvasitu2020topological} was shown directly, and \cite[Proposition 2.1]{Chirvasitu2020topological} was shown by the folding trick based on the $\operatorname{GL}(n;\bC)$-equivariance of a $\rU(n)$-equivariant linear operator (cf.\ \cite[proof of Lemma 3.11]{Chirvasitu2015residually}). 
\end{eg}

\begin{eg}\label{eg_tensor_faith_On+}
	For $n\in\bZ_{\geq 2}$, consider the free orthogonal quantum group $\rO^+(n)$. 
	
	We write $\cO$ for the unital $*$-subalgebra $\cO(\rO^+(n))$ generated by $\{ s_{i,j}s_{k,l} \mid i,j,k,l\in\{1,\cdots,n\} \}$. Then, $\Delta_{\rO^+(n)}(\cO)\subset \cO\odot\cO$, and there is a compact quantum group called the \emph{quantum automorphism group} of $\bM_n$ equipped with the tracial state, denoted by $\Qut(\bM_n)$ in this paper, such that there is an injective $*$-bialgebra map $f\colon \cO(\Qut(\bM_n))\to \cO(\rO^+(n))$ whose image is $\cO$ by \cite[Corollary 4.1]{Banica1999symmetries}. 
	Moreover, there is an injective unital $*$-bialgebra map $g\colon \cO(\Qut(\bM_n))\cong \cO \to \cO(\rU^+(n))$ whose image is the unital $*$-subalgebra of $\cO(\rU^+(n))$ generated by $\{ u_{i,j}u_{k,l}^* \mid i,j,k,l\in\{1,\cdots,n\} \}$ by \cite[Theorem 12]{Gromada2022presentations}. 
	
	Then, it is routine to check that we have the following short exact sequence of $*$-bialgebra maps, 
	\begin{align*}
		\begin{tikzcd}[ampersand replacement=\&]
			\cO(\Qut(\bM_n)) \arrow[r, "f"] \& \cO(\rO^+(n)) \arrow[r,"r"] \& \cO(\wc{\bZ/2\bZ}) 
		\end{tikzcd} , 
	\end{align*}
	where we wrote $r\colon \cO(\rO^+(n))\to \cO(\wc{\bZ/2\bZ})=\bC[\bZ/2\bZ]$ for the unital $*$-homomorphism induced by $r(s_{i,j}) := \delta_{i,j} u$ for all $i,j\in\{1,\cdots,n\}$, using the unitary $u\in \bC[\bZ/2\bZ]$ such that $u^2=1$ and $\Delta_{\wc{\bZ/2\bZ}}(u) = u\otimes u$. 
	Thus, there are $\theta\in\Rep\wc{\Qut(\bM_n)}$ that is tensorially faithful by \autoref{prop_tensor_faith_ext} \ref{item_prop_tensor_faith_ext_sub} and \autoref{eg_tensor_faith_Un+} and thus $\vartheta\in\Rep\wc{\rO^+(n)}$ that is tensorially faithful by \autoref{prop_tensor_faith_ext} \ref{item_prop_tensor_faith_ext_split}. 
\end{eg}

\begin{eg}\label{eg_tensor_faith_OJn+}
	For $n\in\bZ_{\geq 1}$ and $\varsigma\in\{\pm1\}$, consider the free orthogonal quantum group $H^\varsigma := \rO^+\big(\begin{smallmatrix}0 & \varsigma 1_{\bM_n} \\ 1_{\bM_n} & 0\end{smallmatrix}\big)$. 
	We write $s^{\varsigma}_{i,j}$ for the generators $s_{i,j}\in\cO(\rO^+\big(\begin{smallmatrix}0 & \pm1_{\bM_n} \\ 1_{\bM_n} & 0\end{smallmatrix}\big))$ from the definition of $\rO^+(F)$ above. 
	Consider the involutive $*$-automorphism $f^\varsigma\in \Aut(C^u(H^\varsigma))$ 
	determined by 
	$f^\varsigma(s^{\varsigma}_{i,j}) = s^{\varsigma}_{i,j}$, $f^\varsigma(s^{\varsigma}_{n+i,n+j}) = s^{\varsigma}_{n+i,n+j}$, $f^\varsigma(s^{\varsigma}_{i,n+j}) = -s^{\varsigma}_{i,n+j}$, $f^\varsigma(s^{\varsigma}_{n+i,j}) = -s^{\varsigma}_{n+i,j}$ for $i,j\in\{1,\cdots,n\}$. 
	It is routine to check that $f^\varsigma$ is indeed well-defined and satisfies $\Delta^u_{ H^\varsigma } f^{\varsigma}= (f^\varsigma\otimes f^\varsigma)\Delta^u_{ H^\varsigma }$ and $f^{\varsigma} f^{\varsigma} = \id_{C^u(H^\varsigma)}$. Thus, $f^\varsigma$ restricts to a bijective $*$-bialgebra map on $\cO(H^\varsigma)\to \cO(H^\varsigma)$, which extends to the unital normal $*$-automorphism on $L^\infty(H^\varsigma)$ since $h_{H^\varsigma}f^\varsigma=h_{H^\varsigma}$ (cf.\ \autoref{rem_bialg_hom_surj} \ref{item_rem_bialg_hom_surj_3}). 
	
	Then, there is a compact quantum group $G^\varsigma := (L^\infty(H^\varsigma) \barrtimes_{f^\varsigma} \bZ/2\bZ , \Delta^{\varsigma})$, 
	where, by letting $u\in \bC[\bZ/2\bZ] \subset L^\infty(H^\varsigma) \barrtimes_{f^\varsigma} \bZ/2\bZ$ denote the unitary as in \autoref{eg_tensor_faith_On+}, 
	the comultiplication and the Haar state are defined by 
	$\Delta_{G^\varsigma} (x + yu) :=  \Delta_{H^\varsigma}(x) + \Delta_{H^\varsigma}(y)(u\otimes u)$ and 
	$h_{G^\varsigma} (x + yu) := h_{H^\varsigma}(x)$ for all $x,y\in L^\infty(H^\varsigma)$. 
	
	Then, it is routine to check that $G^{+}\cong G^{-}$ by sending $s^{\varsigma}_{i,j} u^k$ to $s^{-\varsigma}_{i,j} u^{k+1}$ for $i,j\in\{1,\cdots,2n\}$ and $k\in\{0,1\}$, 
	and that for each $\varsigma\in\{\pm1\}$, we have the following short exact sequence of $*$-bialgebra maps: 
	\begin{align*}
		\begin{tikzcd}[ampersand replacement=\&]
			\cO(H^\varsigma) \arrow[r, "\subset"] \& \cO(G^\varsigma) \arrow[r,"r"] \& \cO(\wc{\bZ/2\bZ}) 
		\end{tikzcd}, 
	\end{align*}
	where $r\colon \cO(G^\varsigma) \ni x+yu \mapsto \varepsilon_{H^\varsigma}(x)1 + \varepsilon_{H^\varsigma}(y)u \in \bC[\bZ/2\bZ] = \cO(\wc{\bZ/2\bZ})$ for all $x,y\in\cO(H^\varsigma)$.

	By \autoref{prop_tensor_faith_ext}, we see that there is $\theta\in \Rep\rO^+\bigl(\begin{smallmatrix}0 & -1_{\bM_n} \\ 1_{\bM_n} & 0\end{smallmatrix}\bigr)$ that is tensorially faithful if and only if there is $\vartheta\in \Rep\rO^+\bigl(\begin{smallmatrix}0 & 1_{\bM_n} \\ 1_{\bM_n} & 0\end{smallmatrix}\bigr)$ that is tensorially faithful, where the latter is the case by \autoref{eg_tensor_faith_On+} and $\rO^+\bigl(\begin{smallmatrix}0 & 1_{\bM_n} \\ 1_{\bM_n} & 0\end{smallmatrix}\bigr) \cong \rO^+(2n)$. 
\end{eg}

We recall the compact quantum groups $\SU_q(2)$ and $\SO_q(3)$. 
Let $q\in[-1,1]\setminus\{0\}$. 
There is a compact quantum group $\SU_q(2)$ such that $C^u(\SU_q(2))$ is the universal unital C*-algebra generated by $\qa$ and $\qc$ with the relations $\qa^*\qa+\qc^*\qc=1=\qa\qa^*+q^2\qc^*\qc$, $\qa\qc-q\qc\qa=0=\qa\qc^*-q\qc^*\qa$, and $\qc\qc^*=\qc^*\qc$, and that $\Delta_{\SU_q(2)}\colon C(\SU_q(2))\to C(\SU_q(2))^{\otimes 2}$ is the unique unital $*$-homomorphism with $\Delta_{\SU_q(2)}(\qa)=\qa\otimes \qa -q \qc^*\otimes \qc$ and $\Delta_{\SU_q(2)}(\qc) = \qc\otimes \qa +\qa^*\otimes \qc$. 
Then, we have that $C^u(\SU_q(2))=C(\SU_q(2))$ and that $\cO(\SU_q(2))$ is the unital $*$-subalgebra generated by $\qa$ and $\qc$. 

Also, there is a compact quantum group $\SO_q(3)$ such that $C^u(\SO_q(3))=C(\SO_q(3))$ is the unital C*-subalgebra of $C(\SU_q(2))$ generated by $\qc^2$, $\qc^*\qc$, $\qa\qc$, $\qa^*\qc$, and $\qa^2$ and that $\Delta_{\SO_q(3)} = \Delta_{\SU_q(2)}$ on $C(\SO_q(3))$. Then, $\cO(\SO_q(3))$ is the unital $*$-subalgebra of $\cO(\SU_q(2))$ generated by $\qc^2$, $\qc^*\qc$, $\qa\qc$, $\qa^*\qc$, and $\qa^2$, and $h_{\SO_q(3)} = h_{\SU_q(2)}$ on $C(\SO_q(3))$. 
Note that $\SO_q(3)\cong\SO_{-q}(3)$ (see e.g., \cite{Podles1995symmetries}). 


For a compact quantum group $G$ and $\pi\in\Rep G$, we write $\dim_{\Rep G}\pi$ for the intrinsic dimension. 
For each $k\in\bZ_{\geq 0}$, there is a unique $\pi = \pi_{\frac{k}{2}}\in \Irr\SU_q(2)$ such that $\dim_\bC\cH_{\pi} = k+1$. Also, $\Irr\SO_q(3) = \{ \pi_{\frac{k}{2}} \mid k\in 2\bZ_{\geq 0} \}\subset \Irr\SU_q(2)$. 
For each $\pi\in\Irr\SU_q(2)$, expressing $V_{\pi}=(u^{\pi}_{i,j})_{i,j}$ using an appropriate orthonormal basis of $\cH_{\pi}$, we have that 
$h_{\SU_q(2)}(u^{\varpi*}_{k,l} u^{\pi}_{i,j}) = \frac{\delta_{\pi,\varpi}\delta_{j,l}\delta_{i,k}}{\dim_{\Rep \SU_q(2)}\pi} |q|^{2i-1-\dim_\bC\cH_\pi}$ for all $\pi,\varpi\in\Irr\SU_q(2)$, $i,j\in\{1,\cdots,\dim_\bC\cH_\pi\}$, $k,l\in\{1,\cdots,\dim_\bC\cH_\varpi\}$ (see e.g., \cite[Section 5]{Tomatsu2008compact}). 
If $\pi,\varpi\in\Irr\SO_q(3)$, then the left hand side equals 
$h_{\SO_q(3)}(u^{\varpi*}_{k,l} u^{\pi}_{i,j})$. 

Finally, note that we have unitary monoidal equivalences $\Rep \rO^+(n)\simeq \Rep\SU_{\frac{\sqrt{n^2-4}-n}{2}}(2)$ for $n\in\bZ_{\geq 2}$, $\Rep \rO^+\bigl(\begin{smallmatrix}0 & -1_{\bM_n} \\ 1_{\bM_n} & 0\end{smallmatrix}\bigr) \simeq \Rep\SU_{n-\sqrt{n^2-1}}(2)$ for $n\in\bZ_{\geq 1}$ by \cite[Corollary 5.4]{Bichon-DeRijdt-Vaes2006ergodic}, and $\Rep S^+_n\simeq \Rep\SO_{\frac{\sqrt{k}-\sqrt{k-4}}{2}}(3)$ for $k\in\bZ_{\geq 4}$ by \cite[Theorem 4.7]{DeRijdt-VanderVennet2010actions} and \cite[Theorem 1.1]{Soltan2010quantumSO(3)}. 

\section{Outer actions on the Jiang--Su algebra}\label{sec_IU_JiangSu}

In this section, we show \autoref{main_JiangSu}. We begin with the following lemma. 
For a linear operator $\Theta\colon\cH\to\cH$ on a vector space $\cH$ and a polynomial $F(X)=\sum_{k=0}^{n}a_kX^k\in \bC[X]$, we define $F(\Theta) := \sum_{k=0}^{n}a_k\Theta^k \colon \cH\to \cH$, where we promise $\Theta^0:=\id_\cH$. 
Also, we promise $\cH^{\otimes 0}:=\bC$. 
We write $\bZ_{\geq 0}[X] := \{ \sum_{k=0}^{n}a_kX^k \mid n, a_0,a_1,\cdots,a_n\in\bZ_{\geq 0} \} \subset \bC[X]$ for the set of polynomials with non-negative integer coefficients. 

\begin{lem}\label{lem_quantum_channel_irred}
	Let $\cH,\cV$ be non-zero finite dimensional Hilbert spaces. For $V\in\cU(\cH\otimes\cV)$ and $n\in\bZ_{\geq 1}$, we inductively define $V^{(0)}:=1_{\cB(\cH)}$ and $V^{(n)}:=V_{12}V^{(n-1)}_{13}$, 
	where the legs $1$, $2$, and $3$ in the last expression are assigned to the tensorial components $\cB(\cH)$, the leftmost $\cB(\cV)$, and the remaining $\cB(\cV^{\otimes n-1})$, respectively. 
	We consider the unital completely positive linear map $\Theta_V\colon \cB(\cH)\ni x\mapsto (\id_{\cB(\cH)}\otimes\tr_{\cV})(V(x\otimes 1_{\cB(\cV)})V^*)\in \cB(\cH)$. 
	\begin{enumerate}[leftmargin=*,label=(\arabic*)]
		\item\label{item_lem_quantum_channel_irred_1}
		For $F(X)=\sum_{k=0}^{n}a_kX^k\in \bZ_{\geq 0}[X]\setminus\{0\}$ with $n, a_0,a_1,\cdots,a_n\in\bZ_{\geq 0}$, we set $\cV_F:=\bigoplus_{k=0}^{n} (\cV^{\otimes k})^{\oplus a_k}$ and $V_F:=\bigoplus_{k=0}^{n} (V^{(k)})^{\oplus a_k} \in \cB(\cH\otimes\cV_F)$. 
		Then, $\Theta_{V_F} = \frac{ F( \dim_\bC \cV \Theta_V ) }{\dim_\bC\cV_F} \colon \cB(\cH)\to\cB(\cH)$. 
		\item\label{item_lem_quantum_channel_irred_2}
		We write $\wt{\tr}_{\cH} := \tr_{\cH}(-)1_{\cB(\cH)}\colon \cB(\cH)\to\bC1_{\cB(\cH)}$. 
		Then, for $F(X) \in \bZ_{\geq 0}[X]\setminus\{0\}$, we have 
		$\wt{\tr}_{\cH}\circ\Theta_{V_F}=\wt{\tr}_{\cH}=\Theta_{V_F}\circ\wt{\tr}_{\cH}$ and 
		$\wt{\tr}_{\cH}\circ\wt{\tr}_{\cH}=\wt{\tr}_{\cH}$. 
		\item\label{item_lem_quantum_channel_irred_3}
		Suppose that the linear map $\cB(\cH)_*\ni\omega\mapsto (\omega\otimes\id_{\cB(\cV_F)})(V_F) \in \cB(\cV_F)$ is injective for some $F(X)\in \bZ_{\geq 0}[X]\setminus\{0\}$. 
		Then, the positive map $\Theta_V\colon \cB(\cH)\to \cB(\cH)$ is \emph{irreducible} in the sense that there is no projection $P\in\cB(\cH)$ with $P\not\in\bC \id_{\cH}$ such that $\Theta_V(P)\in P\cB(\cH)P$. 
	\end{enumerate}
\end{lem}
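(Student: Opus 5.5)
The plan is to prove the three items in order, with \ref{item_lem_quantum_channel_irred_1} and \ref{item_lem_quantum_channel_irred_2} as routine computations with partial traces, and \ref{item_lem_quantum_channel_irred_3} as a short argument combining them with the injectivity hypothesis.

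For \ref{item_lem_quantum_channel_irred_1}, I first show $\Theta_{V^{(k)}}=\Theta_V^k$ by induction on $k$; the case $k=0$ is trivial. Since $\tr_{\cV^{\otimes k}}=\tr_\cV\otimes\tr_{\cV^{\otimes k-1}}$, I compute the partial trace of
\[
V_{12}V^{(k-1)}_{13}(x\otimes 1)V^{(k-1)*}_{13}V_{12}^*
\]
in two steps. Tracing out leg $3$ first is legitimate because $V_{12}$ acts trivially on leg $3$. This gives $V(\Theta_{V^{(k-1)}}(x)\otimes 1)V^*$. Tracing out leg $2$ then gives $\Theta_V(\Theta_{V^{(k-1)}}(x))$. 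For the direct sum $V_F$, the conjugation $V_F(x\otimes 1)V_F^*$ is block diagonal. The normalized trace on $\cV_F$ restricted to each block $\cV^{\otimes k}$ equals $(\dim_\bC\cV)^k/\dim_\bC\cV_F$ times $\tr_{\cV^{\otimes k}}$. Summing over the $a_k$ copies of each block gives
\[
\Theta_{V_F}=\sum_k a_k(\dim_\bC\cV)^k\Theta_V^k/\dim_\bC\cV_F=F(\dim_\bC\cV\,\Theta_V)/\dim_\bC\cV_F .
\]

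For \ref{item_lem_quantum_channel_irred_2}, the identity $\tr_\cH\circ\Theta_{V_F}=\tr_\cH$ holds because $\tr_\cH\otimes\tr_{\cV_F}$ is invariant under conjugation by the unitary $V_F$, and $(\tr_\cH\otimes\tr_{\cV_F})(x\otimes 1)=\tr_\cH(x)$. The identity $\Theta_{V_F}\circ\wt{\tr}_\cH=\wt{\tr}_\cH$ follows from unitality of $\Theta_{V_F}$. The identity $\wt{\tr}_\cH\circ\wt{\tr}_\cH=\wt{\tr}_\cH$ is immediate.

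For \ref{item_lem_quantum_channel_irred_3}, suppose $P$ is a projection with $\Theta_V(P)\in P\cB(\cH)P$. Since $0\le\Theta_V(P)\le 1$ and it lives in the corner $P\cB(\cH)P$, we get $\Theta_V(P)\le P$. The element $P-\Theta_V(P)$ is then positive with zero trace by \ref{item_lem_quantum_channel_irred_2}, so $\Theta_V(P)=P$. By \ref{item_lem_quantum_channel_irred_1}, $\Theta_{V_F}$ is a polynomial in $\Theta_V$ normalized so that it is unital, and hence also $\Theta_{V_F}(P)=P$.

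Next I set $Q:=V_F(P\otimes 1)V_F^*$, which is a projection. The identity $(\id\otimes\tr_{\cV_F})(Q)=P$ shows that $((1-P)\otimes 1)Q((1-P)\otimes 1)$ is positive with vanishing slice. Faithfulness of $\tr_{\cV_F}$ then forces this element to be $0$, so $Q\le P\otimes 1$. Both projections have the same trace, so $Q=P\otimes 1$, i.e.\ $V_F$ commutes with $P\otimes 1$.

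Now take $\xi\in P\cH$ and $\eta\in(1-P)\cH$, and let $\omega:=\langle\,\cdot\,\xi,\eta\rangle$. Then
\[
(\omega\otimes\id)(V_F)=(\omega\otimes\id)\bigl(((1-P)\otimes1)V_F(P\otimes 1)\bigr)=0 .
\]
By the injectivity hypothesis $\omega=0$, so $\xi=0$ or $\eta=0$. Since $\xi,\eta$ were arbitrary, $P\cH=0$ or $(1-P)\cH=0$, i.e.\ $P\in\{0,1\}$.

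The main obstacle is the step upgrading $\Theta_V(P)\in P\cB(\cH)P$ to the commutation $[V_F,P\otimes 1]=0$. Trace preservation and the faithfulness argument above are exactly what handle it.
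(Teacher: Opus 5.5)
Your proof is correct and follows essentially the paper's route. You use (1) to pass from $\Theta_V$ to $\Theta_{V_F}$, then faithfulness of $\tr_{\cV_F}$ to kill the off-diagonal block $((1-P)\otimes 1)V_F(P\otimes 1)$, and finally the injectivity hypothesis applied to a functional pairing $P\cH$ with $(1-P)\cH$. The only difference is that you first upgrade the corner condition to $\Theta_V(P)=P$ via trace preservation, which makes the transfer to $\Theta_{V_F}$ immediate; the paper instead transfers $\Theta_{V_F}(P)\in P\cB(\cH)P$ directly from (1), implicitly using the nonnegative coefficients of $F$. Your extra step $Q=P\otimes 1$ is harmless but unnecessary, since $((1-P)\otimes 1)Q((1-P)\otimes 1)=0$ already forces the off-diagonal block to vanish.
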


\begin{proof}
	It is straightforward to see \ref{item_lem_quantum_channel_irred_1} and \ref{item_lem_quantum_channel_irred_2}. 
	By \ref{item_lem_quantum_channel_irred_1}, if $\Theta_V$ is not irreducible, then $\Theta_{V_G}$ is not irreducible for all $G(X)=\sum_{k=0}^{m}b_k X^k$ with $m\in\bZ_{\geq 1}$ and $b_0,b_1,\cdots,b_m \in\bZ_{\geq 0}$. 
	Thus, to see \ref{item_lem_quantum_channel_irred_3}, it is enough to show that $\Theta_{V_F}$ is irreducible. 
	
	We suppose that there is a projection $P\in\cB(\cH)$ with $P\not\in\bC \id_{\cH}$ such that $\Theta_{V_F}(P)\in P\cB(\cH)P$ to derive a contradiction. 
	We have 
	\begin{align*}
		0 
		={}& 
		(1_{\cB(\cH)}-P)\bigl((\id_{\cB(\cH)}\otimes\tr_{\cV_F})(V_F(P\otimes 1_{\cB(\cV_F)})V_F^*)\bigr)(1_{\cB(\cH)}-P) 
		\\={}& 
		(\id_{\cB(\cH)}\otimes\tr_{\cV_F}) \bigl( \bigl( (1_{\cB(\cH)}-P)\otimes1_{\cB(\cV_F)} \bigr) (V_F(P\otimes 1_{\cB(\cV_F)})V_F^*) \bigl( (1_{\cB(\cH)}-P)\otimes1_{\cB(\cV_F)} \bigr) \bigr) 
	\end{align*}
	and thus $(P\otimes 1_{\cB(\cV_F)})V_F^*((1_{\cB(\cH)}-P)\otimes1_{\cB(\cV_F)}) =0$ by the faithfulness of $\tr_{\cV_F}$. 
	Since any $0\neq\omega\in\cB(\cH)_*$ cannot annihilate the subspace $\{ (\id_{\cB(\cH)}\otimes\upsilon)(V_F) \mid \upsilon \in\cB(\cV_F)_* \}$ by assumption, 
	it follows from $P\neq0\neq1_{\cB(\cH)}-P$ that there is $\upsilon\in \cB(\cV_F)_*$ with $(1_{\cB(\cH)}-P) \bigl((\id_{\cB(\cH)}\otimes\upsilon)(V_F)\bigr) P \neq0$, but this contradicts $(P\otimes 1_{\cB(\cV_F)})V_F^*((1_{\cB(\cH)}-P)\otimes1_{\cB(\cV_F)}) =0$. 
\end{proof}

Thanks to the Perron--Frobenius theorem for irreducible positive maps by Evans--H{\o}egh-Krohn \cite{Evans-HoeghKrohn1978spectral}, we have the following. 

\begin{cor}\label{cor_quantum_channel_irred}
	Let $\cH,\cV$ be non-zero finite dimensional Hilbert spaces and $V\in\cU(\cH\otimes\cV)$ such that the linear map $\cB(\cH)_*\ni\omega\mapsto (\omega\otimes\id_{\cB(\cV^{\otimes n})})(V^{(n)}) \in \cB(\cV^{\otimes n})$ is injective (or equivalently, the dimension of its image equals $\dim_\bC\cB(\cH)$) for all $n\in\bZ_{\geq 1}$. 
	Then, the map $\Theta_V\colon \cB(\cH)\ni X\mapsto (\id_{\cB(\cH)}\otimes\tr_{\cV})(V(X\otimes 1_{\cB(\cV)})V^*)\in \cB(\cH)$ satisfies the following. 
	\begin{enumerate}[leftmargin=*,label=(\arabic*)]
		\item\label{item_cor_quantum_channel_irred_1}
		The generalized eigenspace of $\Theta_V$ with the eigenvalue $1$ is $\bC1_{\cB(\cH)}$. In particular, the eigenspace of $\Theta_V$ with the eigenvalue $1$ is also $\bC1_{\cB(\cH)}$. 
		\item\label{item_cor_quantum_channel_irred_2}
		$\lambda_V:=\max\{ |\lambda| \mid \lambda\in\bC\setminus\{ 1 \}\text{ is an eigenvalue of $\Theta_V$} \} <1$, where we promise $\max\varnothing=-\infty$. 
		\item\label{item_cor_quantum_channel_irred_3}
		$\wt{\tr}_{\cH} = \tr_{\cH}(-)1_{\cB(\cH)}\colon \cB(\cH)\to\bC1_{\cB(\cH)}$ is the projection onto the eigenspace of $\Theta_V$ with eigenvalue $1$. 
		\item\label{item_cor_quantum_channel_irred_4}
		$\lim_{n\to\infty} \|\Theta_V^n - \wt{\tr}_{\cH}\|=0$. 
	\end{enumerate}
\end{cor}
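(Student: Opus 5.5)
The plan is to reduce everything to the Perron--Frobenius theory of Evans--H{\o}egh-Krohn, applied to a suitable polynomial in $\Theta_V$, and then transfer the conclusions back to $\Theta_V$ itself.

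\emph{Step 1: an irreducible, primitive replacement for $\Theta_V$.} Take $F(X)=\sum_{k=1}^{N}X^k$ for a large $N$, for instance $N=\dim_\bC\cB(\cH)$; the exact value only matters for the primitivity argument below. The hypothesis says each $\omega\mapsto(\omega\otimes\id)(V^{(k)})$ is injective, so the map $\omega\mapsto(\omega\otimes\id_{\cB(\cV_F)})(V_F)$ is injective as well. By \autoref{lem_quantum_channel_irred} \ref{item_lem_quantum_channel_irred_3}, $\Theta_V$ is irreducible. By the same lemma applied to any $G\in\bZ_{\geq0}[X]\setminus\{0\}$ with non-zero coefficients of positive degree, every $\Theta_{V_G}$ is irreducible. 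In particular this holds for all powers $\Theta_V^m=\Theta_{V^{(m)}}$ up to normalization, using part \ref{item_lem_quantum_channel_irred_1} with $G=X^m$ and $\dim\cV_G=(\dim\cV)^m$, together with the injectivity hypothesis for $V^{(m)}$.

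\emph{Step 2: spectral consequences.} $\Theta_V$ is unital and completely positive, and it preserves $\tr_\cH$ by \autoref{lem_quantum_channel_irred} \ref{item_lem_quantum_channel_irred_2}; hence its spectral radius is $1$. By Evans--H{\o}egh-Krohn, an irreducible unital positive map has eigenvalue $1$ that is simple (geometrically and algebraically), with eigenvector $1_{\cB(\cH)}$. Moreover, its peripheral spectrum is the group of $p$-th roots of unity for some $p$, and each peripheral eigenvalue is simple. This gives \ref{item_cor_quantum_channel_irred_1}.

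For \ref{item_cor_quantum_channel_irred_2} we must exclude $p>1$. If $p>1$, then $\Theta_V^p$ would have eigenvalue $1$ with multiplicity $p>1$. But $\Theta_V^p$ is again irreducible by Step 1, since $V^{(p)}$ satisfies the injectivity hypothesis, so its eigenvalue $1$ must be simple. This contradiction gives $p=1$, and since the spectrum is finite, $\lambda_V<1$. This primitivity step is the main obstacle: one has to check carefully that $\Theta_V^p$ is irreducible, via $\Theta_{V^{(p)}}=\Theta_V^p$ from \autoref{lem_quantum_channel_irred} \ref{item_lem_quantum_channel_irred_1}, and that the Evans--H{\o}egh-Krohn cyclicity statement applies to unital positive maps in this form.

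\emph{Step 3: the projection and convergence.} By \autoref{lem_quantum_channel_irred} \ref{item_lem_quantum_channel_irred_2}, $\wt{\tr}_\cH$ is an idempotent commuting with $\Theta_V$, and $\Theta_V$ acts as the identity on its image $\bC1$. Its kernel, the trace-zero matrices, is $\Theta_V$-invariant. On that kernel $\Theta_V$ has no eigenvalue $1$: the generalized $1$-eigenspace is $\bC1$, which meets the kernel trivially. So $\wt{\tr}_\cH$ is exactly the spectral projection onto the eigenvalue-$1$ part, which is \ref{item_cor_quantum_channel_irred_3}.

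Finally, $\Theta_V^n-\wt{\tr}_\cH=\Theta_V^n(\id-\wt{\tr}_\cH)$, and the restriction of $\Theta_V$ to the trace-zero part has spectral radius $\lambda_V<1$. By Gelfand's formula its powers tend to $0$ in norm, which gives \ref{item_cor_quantum_channel_irred_4}.
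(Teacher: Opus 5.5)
Your proof is correct and follows essentially the same route as the paper. You get irreducibility of every power $\Theta_V^n=\Theta_{V^{(n)}}$ from \autoref{lem_quantum_channel_irred}, then use the Evans--H{\o}egh-Krohn Perron--Frobenius results to get a simple eigenvalue $1$ with trivial peripheral spectrum, and finish with trace invariance and Gelfand's formula. Your flagged concern resolves as in the paper: their cyclicity theorem is stated for Schwarz maps, and $\Theta_V$ is one because it is unital completely positive. The differences are minor. The paper proves algebraic simplicity of the eigenvalue $1$ directly from $\tr_{\cH}\Theta_V=\tr_{\cH}$, by excluding a Jordan block, instead of attributing it to Evans--H{\o}egh-Krohn. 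It also identifies $\wt{\tr}_{\cH}$ with the spectral projection via the characteristic polynomial rather than via your invariant splitting $\bC1_{\cB(\cH)}\oplus\Ker\tr_{\cH}$.
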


\begin{proof}
	Clearly, $1_{\cB(\cH)}$ is an eigenvector of $\Theta_V$ with the eigenvalue $1$. 
	It follows from $\|\Theta_V\|=1$ and the positivity of $\Theta_V$ that $1$ is an eigenvalue of $\Theta_V$ with largest absolute value. 
	Moreover, Stinespring's dilation applied to the unital completely positive map $\Theta_V$ shows that $\Theta_V(x^*x)\geq \Theta_V(x)^*\Theta_V(x)$ for all $x\in\cB(\cH)$ (see e.g., \cite[Proposition 1.5.7 (1)]{Brown-Ozawa-book}). In particular, $\Theta_V$ is a Schwartz map in the sense of \cite[(4.1)]{Evans-HoeghKrohn1978spectral}. 
	Then, \ref{item_cor_quantum_channel_irred_2} and the one-dimensionality of the eigenspace of $\Theta_V$ with the eigenvalue $1$ are consequences of \cite[Theorem 2.3, Theorem 2.4, Theorem 4.2]{Evans-HoeghKrohn1978spectral} since $\Theta_{V^{(n)}}=\Theta_{V}^n$ is irreducible for all $n\in\bZ_{\geq 1}$ by \autoref{lem_quantum_channel_irred} \ref{item_lem_quantum_channel_irred_3}. 
	
	To see \ref{item_cor_quantum_channel_irred_1}, it remains to see that any generalized eigenvector of $\Theta_V$ with eigenvalue $1$ is an eigenvector. 
	If not, there is $x\in \Ker(\Theta_V - \id_{\cB(\cH)})^n \setminus \Ker(\Theta_V - \id_{\cB(\cH)})^{n-1}$ for some $n\in\bZ_{\geq 2}$. 
	Since $(\Theta_V - \id_{\cB(\cH)})^{n-1}(x) \in \Ker (\Theta_V - \id_{\cB(\cH)}) = \bC1_{\cB(\cH)}$, there is $c\in\bC$ such that $(\Theta_V - \id_{\cB(\cH)})^{n-1}(x)=c1_{\cB(\cH)}$. Note that $c\neq 0$ by $x\not\in \Ker(\Theta_V - \id_{\cB(\cH)})^{n-1}$. 
	However, since $\tr_{\cH}\Theta_V=\tr_{\cH}$, we have 
	\begin{align*}
		0 ={}& \tr_{\cH}\Theta_V(\Theta_V - \id_{\cB(\cH)})^{n-2}(x) - \tr_{\cH}(\Theta_V - \id_{\cB(\cH)})^{n-2}(x) 
		\\={}& \tr_{\cH}(\Theta_V - \id_{\cB(\cH)})^{n-1}(x) = \tr_{\cH}(c1_{\cB(\cH)}) = c \neq 0, 
	\end{align*}
	which is a contradiction. 
	Hence, \ref{item_cor_quantum_channel_irred_1} holds. 
	
	By \ref{item_cor_quantum_channel_irred_1} and \ref{item_cor_quantum_channel_irred_2}, there is the polynomial $F(X)\in\bC[X]$ such that $(X-1)F(X)$ is the characteristic polynomial of $\Theta_V$, and we have $F(1)\neq 0$. Thus, $F(1)^{-1}F(\Theta_V)$ is the projection from $\cB(\cH)$ onto the (generalized) eigenspace of $\Theta_V$ with the eigenvalue $1$. For any $x\in \cB(\cH)$, since $(F(1)^{-1}F(\Theta_V))(x)$ is of the form $c1_{\cB(\cH)}$ for some $c\in\bC$, using $\wt{\tr}_{\cH} = \wt{\tr}_{\cH}\Theta_V$ we see that 
	\begin{align*}
		\wt{\tr}_{\cH}(x) = \wt{\tr}_\cH \circ(F(1)^{-1} F(\Theta_V)) (x) = \wt{\tr}_\cH(c1_{\cB(\cH)}) = c1_{\cB(\cH)} = (F(1)^{-1} F(\Theta_V)) (x) , 
	\end{align*}
	which implies \ref{item_cor_quantum_channel_irred_3}. 
	
	Finally, by \ref{item_cor_quantum_channel_irred_2} and \ref{item_cor_quantum_channel_irred_3}, we have 
	\begin{align*}
		&
		\max\{ |\lambda| \mid \lambda\in\bC\text{ is an eigenvalue of $\Theta_V-\wt{\tr}_{\cH}$} \} = \max\{ \lambda_V, 0 \} <1. 
	\end{align*}
	Since the leftmost expression equals the spectral radius of $\Theta_V-\wt{\tr}_{\cH}$, which coincides with $\lim_{n\to\infty}\| (\Theta_V-\wt{\tr}_{\cH})^n \|^{\frac{1}{n}}$, we have $\lim_{n\to\infty}\| (\Theta_V-\wt{\tr}_{\cH})^n \|=0$. 
	Combined with $\Theta_V\wt{\tr}_{\cH}=\wt{\tr}_{\cH}=\wt{\tr}_{\cH}\Theta_V$, we see \ref{item_cor_quantum_channel_irred_4}. 
\end{proof}

\begin{df}\label{def_tr_pres_action}
	Let $G$ be a compact quantum group, $A$ be a C*-algebra, and $M$ be a von Neumann algebra. 
	Let $(f,\gamma)$ be a pair either of a state $f$ on $A$ and a left $\Gamma$-action $\gamma$ on $A$ or of a normal state $f$ on $M$ and a left $\Gamma$-action $\gamma$ on $M$. 
	Then, we say that the left $\Gamma$-action $\gamma$ is \emph{$f$-preserving} if $(\id_{c_0(\Gamma)}\otimes f)\gamma = 1_{\ell^\infty(\Gamma)}\otimes f$. 
	When $A$ and $M$ are unital and have unique tracial states and $f$ is the tracial state, then we say that $\gamma$ is trace-preserving. 
\end{df}

\begin{lem}\label{lem_tr_pres_action}
	Let $G$ be a compact quantum group, $A$ be a unital C*-algebra, $f$ be a faithful tracial state on $A$, and $\gamma$ be a left $\Gamma$-action on $A$ that is $f$-preserving. 
	Then, when we write $M$ for the von Neumann completion of $A$ with respect to the GNS construction of $f$ and extend $f$ to the normal tracial state on $M$, then $\gamma$ uniquely extends to a left $\Gamma$-action on $M$ that is $f$-preserving. 
\end{lem}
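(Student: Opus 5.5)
The plan is to implement $\gamma$ spatially on the GNS space and then take weak closures. Write $L^2(A,f)$ with cyclic vector $\xi_f$, and regard $A\subset M\subset\cB(L^2(A,f))$. A left $\Gamma$-action is a non-degenerate $*$-homomorphism $\gamma\colon A\to\cM(c_0(\Gamma)\otimes A)$ satisfying $(\Delta_\Gamma\otimes\id)\gamma=(\id\otimes\gamma)\gamma$ (together with the density condition recalled in \autoref{sec_prelim_action}). Since $c_0(\Gamma)\cong\bigoplus_{\pi\in\Irr G}\cB(\cH_\pi)$, it is natural to work componentwise. For each $\pi$, set $\gamma_\pi:=(\Pi_\pi\otimes\id)\gamma\colon A\to\cB(\cH_\pi)\otimes A$, which is a unital $*$-homomorphism between unital C*-algebras.

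\textbf{Step 1: componentwise extension.} The $f$-preserving condition gives $(\id\otimes f)\gamma_\pi(a)=f(a)1_{\cB(\cH_\pi)}$. Apply it to $a^*a$ and compose with $\tr_{\cH_\pi}$; this shows that $\tr_{\cH_\pi}\otimes f$ composed with $\gamma_\pi$ equals $f$. Hence the map $a\xi_f\mapsto\gamma_\pi(a)(\xi_{\tr}\otimes\xi_f)$ is an isometry $W_\pi\colon L^2(A,f)\to L^2(\cB(\cH_\pi),\tr)\otimes L^2(A,f)$. This isometry intertwines $a$ with $\gamma_\pi(a)$, so $\gamma_\pi(a)=W_\pi aW_\pi^*$ on the range of $W_\pi$. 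To obtain a genuine normal extension, I would show that $\gamma_\pi$ is $\sigma$-weakly continuous on bounded sets, measured with respect to the faithful normal trace $\tr\otimes f$ on $\cB(\cH_\pi)\barotimes M$. Concretely, if $a_i\to a$ strongly in the unit ball of $A$, then
\[
\|(\gamma_\pi(a_i)-\gamma_\pi(a))\eta\|\to0
\]
for $\eta=x(\xi_{\tr}\otimes\xi_f)$ with $x\in\cB(\cH_\pi)\otimes A$. The idea is to move $x$ to the right using the trace property: $\|y x\,\xi\|_2=\|x^*y^*\xi\|_2\le\|x\|\,\|y\|_2$, where $y=\gamma_\pi(a_i-a)$ and $\|y\|_2=\|a_i-a\|_2$. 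Such vectors $\eta$ are dense, so the convergence holds on the whole space. By Kaplansky density, $\gamma_\pi$ then extends uniquely to a normal unital $*$-homomorphism $\bar\gamma_\pi\colon M\to\cB(\cH_\pi)\barotimes M$, and $(\id\otimes f)\bar\gamma_\pi=f(\cdot)1$ still holds by normality.

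\textbf{Step 2: assembly.} Define $\bar\gamma:=\prod_\pi\bar\gamma_\pi\colon M\to\prod_\pi\cB(\cH_\pi)\barotimes M=\ell^\infty(\Gamma)\barotimes M$. This map is normal, since each component is normal and the bounded net argument works in each component. Coassociativity $(\Delta_\Gamma\otimes\id)\bar\gamma=(\id\otimes\bar\gamma)\bar\gamma$ holds on $A$, and both sides are normal maps, so it extends to $M$ by $\sigma$-weak density. Injectivity and the remaining conditions in the definition of a von Neumann algebraic action (e.g.\ the density condition recalled in \autoref{sec_prelim_action}) should pass to the closure in the same way, because the corresponding spans are $\sigma$-weakly dense. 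Uniqueness is clear: a normal extension is determined by its values on the $\sigma$-weakly dense subalgebra $A$.

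\textbf{Main obstacle.} The main difficulty is Step 1, namely showing normality of each $\gamma_\pi$. This is precisely where traciality of $f$ is used, in the $\|\cdot\|_2$ estimate with $x$ moved to the right. If the definition of an action in \autoref{sec_prelim_action} also requires a condition such as $\cspan\,\gamma(A)(c_0(\Gamma)\otimes1)=c_0(\Gamma)\otimes A$, I would check its von Neumann analogue componentwise, again by density.
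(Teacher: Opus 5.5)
Your argument is correct, but it gets normality by a different mechanism than the paper.

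\textbf{The paper's route.} The paper leaves the $\cH_\pi$-leg free. It defines $U_\pi$ on $\cH_\pi\otimes L^2(A,f)$ by $U_\pi(\xi\otimes a)=\gamma_\pi(a)(\xi\otimes 1_A)$, which is isometric by $f$-invariance alone. It then uses the continuity condition $\cspan(c_0(\Gamma)\otimes 1)\gamma(A)=c_0(\Gamma)\otimes A$, which follows from injectivity via \autoref{lem_coame_conti_action}, to see that $U_\pi$ has dense range and is therefore unitary. Hence $\gamma_\pi=\Ad U_\pi(1_{\cB(\cH_\pi)}\otimes(-))$, and $\bar\gamma=\prod_\pi\Ad U_\pi(1\otimes(-))$ is visibly normal, with every component injective.

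\textbf{Your route.} Your isometry $W_\pi$ only sees the single vector $\xi_{\tr}$ and is not surjective when $\dim\cH_\pi>1$. As you noticed, it cannot implement $\gamma_\pi$. You instead obtain normality from the estimate $\|\gamma_\pi(b)x(\xi_{\tr}\otimes\xi_f)\|\le\|x\|\,\|b\xi_f\|$, which relies essentially on traciality of $\tr\otimes f$.

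\textbf{Trade-off.} The paper's argument never uses traciality and gives a spatial implementation of $\gamma_\pi$, but it needs the density condition. Yours needs traciality but only $f$-invariance, and never uses the density condition.

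\textbf{One point to fix.} Injectivity of $\bar\gamma$ does not follow from $\sigma$-weak density of $A$ in $M$; normal extensions of injective maps can fail to be injective. In your setting it follows from $(\tr_{\cH_\pi}\otimes f)\bar\gamma_\pi=f$ together with faithfulness of the extended trace on $M$, since the GNS vector of a tracial state is separating for $A''$. Alternatively, it follows from $\bar\gamma_{\mathbbm{1}}=\id_M$. Also, the paper's W*-algebraic definition of an action imposes no density condition, so nothing further needs to be checked there.
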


\begin{proof}
	For each $\pi\in\Irr G$, consider the $*$-homomorphism $\gamma_{\pi} := (\Pi_\pi\otimes\id_A) \gamma\colon A\to \cB(\cH_\pi)\otimes A$. Note that $\prod_{\pi\in\Irr G}\gamma_{\pi} = \prod_{\pi\in\Irr G}(\Pi_\pi\otimes\id_A) \gamma$. 
	There is an isometry $U_{\pi}\in\cB\bigl(\cH_\pi\otimes L^2( A,f ) \bigr)$ such that for all $a\in A$ and $\xi\in\cH_\pi$, 
	\begin{align*}
		U_{\pi}(\xi\otimes a) 
		={}& 
		\gamma_{\pi}(a) (\xi\otimes1_A) 
		\in 
		\cH_\pi\otimes L^2( A, f ) 
	\end{align*}
	where $a$ and $1_A$ are regarded as elements in $L^2(A,f)$. 
	Indeed, fixing an orthonormal basis $(e_i)_{i=1}^{\dim_\bC\cH_\pi}$ of $\cH_\pi$ and setting $a_{i,j}\in A$ for the $(i,j)$-th component of $\gamma_{\pi}(a)\in \cB(\cH_\pi)\otimes A$ with respect to $(e_i)_i$, we can check that, for all $a,b\in A$, $k,l\in\{1,\cdots,\dim_\bC\cH_\pi\}$, 
	\begin{align*}
		&
		\bra \gamma_{\pi}(b)(e_l\otimes 1_{A}), \gamma_{\pi}(a)(e_k\otimes 1_{A}) \ket 
		= 
		\bra e_l, (\id_{\cB(\cH_\pi)} \otimes f) \gamma_{\pi}(b^*a) e_k \ket 
		\\={}&
		\bra e_l, f(b^*a)1_{\cB(\cH_\pi)} e_k \ket 
		= 
		\delta_{k,l}  f ( b^*a ) 
		= 
		\bra e_l\otimes b, e_k\otimes a \ket . 
	\end{align*}
	It follows from the continuity of the left $\Gamma$-action $\gamma$ by \autoref{lem_coame_conti_action} that the image of $U_\pi \in \cB(\cH_\pi\otimes L^2(A,f))$ is norm-dense. Thus, $U_\pi$ is a unitary. 
	
	Since $(\prod_{\pi\in\Irr G}\Pi_\pi\otimes\id_A)\gamma(a)=(\Ad U_\pi (1_{\cB(\cH_\pi)}\otimes a))_{\pi\in\Irr G}$ for all $a\in A$, and we see that $\gamma$ uniquely extends to the unital normal $*$-homomorphism $\bar{\gamma} := \prod_{\pi\in\Irr G} \Ad U_\pi (1_{\cB(\cH_\pi)}\otimes (-)) \colon M\to \prod_{\pi\in\Irr G}(\cB(\cH_\pi) \otimes M)$. 
	It is easy to see that $\bar{\gamma}$ is a left $\Gamma$-action on $M:=A''\subset\cB(L^2(A,f))$ and that we have $1_{\ell^\infty(\Gamma)}\otimes f=(\id_{\ell^\infty(\Gamma)}\otimes f)\bar{\gamma}$. 
\end{proof}

\begin{thm}\label{thm_IU_JiangSu}
	Let $G$ be a compact quantum group with $\theta\in\Rep\Gamma$ that is tensorially faithful. 
	Then, there is a trace-preserving outer left $\Gamma^{\op}$-action on the Jiang--Su algebra $\cZ$ that extends to a trace-preserving outer left $\Gamma^{\op}$-action on the von Neumann completion of $\cZ$ with respect to the unique faithful tracial state on $\cZ$. 
\end{thm}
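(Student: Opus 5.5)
We build the action directly into the Jiang--Su inductive limit: each building block carries a $\Gamma^{\op}$-action, and the connecting maps are twisted by the unitaries $V_{\pi,\theta}$.

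\textbf{Building blocks.} Recall that $\cZ=\varinjlim Z_{p_k,q_k}$, where
\[
Z_{p,q}=\{f\in C([0,1],\bM_p\otimes\bM_q)\mid f(0)\in\bM_p\otimes 1,\ f(1)\in 1\otimes\bM_q\}.
\]
The connecting maps are of the form $f\mapsto u^*\operatorname{diag}(f\circ\xi_1,\dots,f\circ\xi_m)u$.

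Given the tensorially faithful $\theta$, we equip the matrix algebras with the actions $\Ad$ of the representations $\theta_k:=\theta^{\otimes n_k}\oplus\mathbbm 1^{\oplus c_k}$, or more generally $V_{F_k}$ for polynomials $F_k\in\bZ_{\geq0}[X]$ as in \autoref{lem_quantum_channel_irred}. Passing to $\Gamma^{\op}$ is what turns $\Ad V_\theta$ into a left action on $\bM_p$; this is the reason for the $\op$, cf.\ \autoref{rem_tensor_faith_op}. Thus the coefficients $\bM_{p_k}\otimes\bM_{q_k}$ carry the inner actions of $\theta$-type representations, acting fibrewise and constantly in $t$. This is compatible with the boundary conditions because the action on $\bM_p\otimes 1$ is of the form $\Ad(\phi\otimes\mathbbm 1)$.

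\textbf{Connecting maps.} We arrange that the multiplicities required by Jiang--Su's construction are realized by representations $\phi_{k+1}\cong\phi_k^{\oplus m}\otimes\theta^{\otimes(\cdot)}\oplus\cdots$. The intertwining unitary $u$ is chosen as an intertwiner in $\Rep\Gamma$. Such choices exist because any dimension count can be met by adding trivial summands, which commute with everything, together with tensor powers of $\theta$. The connecting maps are then equivariant, and we obtain a $\Gamma^{\op}$-action on $\cZ$.

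\textbf{Trace preservation.} The action is trace-preserving because each $\Ad$ of a unitary representation preserves $\tr$ fibrewise and $\cZ$ has a unique trace. By \autoref{lem_tr_pres_action} it then extends to $\cR=\pi_\tau(\cZ)''$.

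\textbf{Outerness.} It suffices to prove outerness on the von Neumann completion, since outerness there implies outerness on $\cZ$. We split $\Irr\Gamma$ into group-like and non-group-like parts, as announced in the introduction.

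For a non-group-like irreducible $\sigma$, an inner implementation would give a nonzero intertwiner in the relative commutant. We then use the conditional-expectation-type maps $\Theta_{V}$ coming from the connecting maps. The $k$-fold composition of the inductive system acts on the commutant roughly as $\Theta_V^{n_k}$ tensored with identities. By \autoref{cor_quantum_channel_irred}\ref{item_cor_quantum_channel_irred_4}, $\Theta_V^n\to\wt{\tr}_{\cH}$ in norm, with $\cH=\cH_\pi$ for $\pi\in\Irr G$. The hypothesis of that corollary is exactly tensorial faithfulness of $\theta$, which gives injectivity of $\omega\mapsto(\omega\otimes\id)(V_{\pi,\theta}^{(n)})$ via Schur orthogonality on the coefficient space of $\pi$. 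Consequently any fixed element supported on a finite stage is asymptotically averaged to a scalar, i.e.\ the asymptotic relative commutants $\Hom(\sigma,\cdot)$ vanish in the limit. Hence no nontrivial non-group-like $\sigma$ can be implemented inside $\cR$, and thus none inside $\cZ$.

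For a group-like element $g$, the action is given by an automorphism $\Ad(\Pi_{\theta^{\otimes n}}(g))$ at each stage. Tensorial faithfulness gives $\Pi_{\theta^{\otimes n}}(g)\notin\bC 1$ for some $n$. A standard Bernoulli-type argument shows that infinitely many tensor-like factors with non-scalar implementing unitaries yield an outer automorphism of the limit: its implementing unitary would have to be asymptotically central.

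\textbf{Main obstacle.} The hard step is bookkeeping the connecting maps so that they are simultaneously Jiang--Su maps (with the correct dimension-drop boundary conditions and the simplicity-producing point evaluations $\xi_i$) and equivariant. The tensor powers and trivial summands must give the needed coprime multiplicities while the twisting by $V^{(n)}$ still accumulates, so that the Perron--Frobenius contraction of \autoref{cor_quantum_channel_irred} applies uniformly. I would first construct the action on the UHF-type limit $\bigotimes\bM_{p_k}$ to test the outerness argument there, and only afterwards handle the dimension-drop modifications.
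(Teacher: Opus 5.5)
Your outline follows the paper's route: fibrewise inner actions $\Ad V_\vartheta$ on Jiang--Su building blocks, with $\vartheta$ polynomials in $\theta$; equivariant connecting maps; extension to the tracial completion; and outerness split into invertible $\pi$ and the rest, the latter via \autoref{cor_quantum_channel_irred}. However, the step you call the main obstacle is exactly where the proof lives, and you leave it open, so there is a genuine gap. Three things are missing there.

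First, in an equivariant connecting map the multiplicity of each evaluation block is itself an object of $\Rep\Gamma$ (the paper's $\vartheta_{n+1,0},\vartheta_{n+1,\frac12},\vartheta_{n+1,1}$). So the Jiang--Su identities must be lifted to isomorphisms of objects, including tensor factorizations such as $\vartheta_{n+1,\frac12}\oplus\vartheta_{n+1,1}\cong Q(\theta)\otimes\psi_{[0,n+1]}$ and $\phi_{n+1}\cong\mathbbm{1}\oplus\psi_{[1,n]}\otimes Q(\theta)\otimes\psi_0$. A dimension count with trivial padding does not give these. The paper obtains them from identities in $\bZ_{\geq 0}[X]$: for $F_{n,0}=(X+2)^{2^n}+1$ and $F_{n,1}=(X+2)^{2^n}$ one has $F_{n,0}=\bigl(\prod_{k<n}F_{k,1}\bigr)(X+2)+1$, $F_{n,1}=\bigl(\prod_{k<n}F_{k,0}\bigr)(X+1)+1$, and $F_{n,0}F_{n,1}-F_{n,0}-F_{n,1}\in\bZ_{\geq 0}[X]$. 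The padding must also be controlled: if trivial summands dominate, $\Theta_{V_{\pi,\vartheta_{n,i}}}$ approaches the identity on traceless elements and the uniform contraction you need later is lost.

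Second, your boundary argument only works at $t=0$. Since $V_{\phi\otimes\psi}=(V_\phi)_{12}(V_\psi)_{13}$ and $\ell^\infty(\Gamma)$ is noncommutative, $\cB(\cV_\phi)\otimes 1$ is $\Ad V_{\phi\otimes\psi}$-invariant but $1\otimes\cB(\cV_\psi)$ in general is not. Hence the condition $f(1)\in 1\otimes\bM_q$ is not preserved. The paper uses two identifications, $\phi_{[1,n]}\otimes\psi_{[1,n]}\cong\vartheta_{[1,n]}\cong\psi_{[1,n]}\otimes\phi_{[1,n]}$ (available because everything is a polynomial in $\theta$), so that the surviving factor is the left one at both endpoints.

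Third, the twisting unitary must be a norm-continuous path $U_n(t)$ of self-intertwiners joining the two prescribed endpoint unitaries. It exists because the unitary group of the finite dimensional C*-algebra $\Hom_{\Rep\Gamma}(\vartheta_{[1,n+1]},\vartheta_{[1,n+1]})$ is connected.

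Your outerness argument is given only in the infinite-tensor-product model, which does not literally apply. Here $M_n=\cB(\cV_{\vartheta_{[1,n]}})\barotimes\ell^\infty(\Sigma(J))$ has a large centre, and the connecting maps mix the points $\frac t2,\frac12,\frac{1+t}2$. So the level-$n$ part of a relative commutant element is a $\cB(\cH_\pi)$-valued function $y_n$ on the dyadic points $\Sigma(J)$, not a single matrix. The paper needs:
\begin{itemize}
\item the traces $\mu_n$ supported on $\Sigma(J)$;
\item normal conditional expectations $E_{n,n+1}$ that average over $[0,1]$ at $t=\frac12$;
\item the bounds $\|\Theta_{V_{\pi,\vartheta_{n,i}}}-\wt{\tr}_{\cH_\pi}\|\le\wt c<1$ and $|\tr(V_{\pi,\vartheta_{n,i}})|\le c'<1$, uniformly in $n\ge r$ and $i\in\{0,\frac12,1\}$.
\end{itemize}
These bounds are why $\phi_n$ is a $2^n$-th tensor power: \autoref{lem_quantum_channel_irred} then gives $\Theta_{V_{\pi,\phi_n}}=\Theta_{V_{\pi,\phi_b}}^{2^{n-b}}$, while $\psi_n$ and $\vartheta_{n,\frac12}$ are negligible perturbations. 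The recursion $y_n(t)=\Theta_{V_{\pi,\vartheta_{n+1,i}}}(y_{n+1}(2t-i))$ forces $y_n$ to be scalar-valued, and the averaging at $\frac12$ forces it to be constant. For invertible $\pi$, your Bernoulli argument must be replaced by the estimate $\|f_n\|\le c^{\prime k}\|f_{n+k}\|$ for $f_n=V_{\pi,\vartheta_{[1,n]}}^*E_{n,\infty}(\wt V)$.

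Finally, trace preservation in the quantum sense, $(\id\otimes\tr)\Ad V_{\pi,\vartheta}(1\otimes x)=\tr(x)1$, requires $V_{\pi,\vartheta}$ to be a biunitary, i.e.\ $G$ of Kac type. This follows from tensorial faithfulness via \autoref{rem_RFD_Kac}. Neither trace invariance under a single $\Ad u$ nor uniqueness of the trace on $\cZ$ gives it.
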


\begin{proof}
	We divide the proof into several steps. 
	For each $\varphi\in\Rep\Gamma$, note that 
	$\Ad V_{\varphi}(1_{\ell^\infty(\Gamma)}\otimes (-))\colon \cB(\cV_\varphi)\otimes A \to \cM(c_0(\Gamma)\otimes \cB(\cV_\varphi) \otimes A)$ is a well-defined left $\Gamma^{\op}$-action on $\cB(\cV_\varphi)\otimes A$ for any C*-algebra $A$ since $V^{G*}_{12} (V_\varphi)_{23} V^G_{12} = (V_\varphi)_{13}(V_\varphi)_{23}$. 
	
	\medskip\paragraph{\bf Step 1}
	
	Firstly, we set up symbols. 
	Take $\theta\in\Rep\Gamma$ that is tensorially faithful. 
	We may assume that $\mathbbm{1}\leq\theta$ by considering $\theta\oplus\mathbbm{1}$ instead if necessary. 
	Then, for all $r\in\bZ_{\geq 1}$, we have $\theta\leq \theta^{\otimes r}$. 
	
	For $F(X)=\sum_{k=0}^{n}a_k X^k \in\bZ_{\geq 0}[X]$, we let $F(\theta) := \bigoplus_{k=0}^{n} (\theta^{\otimes k})^{\oplus a_k}$, where we promise $\theta^{\otimes 0}:=\mathbbm{1}$. 
	Here, we emphasize that when $F(X)$ is given by a product of polynomials, $F(\theta)$ is defined using the expanded form of $F$. For example, when $F(X):=(X+1)(X+2)$, then $F(\theta):=\theta^{\otimes 2}\oplus \theta^{\oplus 3}\oplus \mathbbm{1}^{\oplus 2}$ rather than $(\theta\oplus\mathbbm{1})\otimes(\theta\oplus\mathbbm{1}^{\oplus 2})$. 
	
	For $n\in\bZ_{\geq 0}$, we define $F_{n,0},F_{n,1},P,Q\in\bZ_{\geq 0}[X]$ by 
	\begin{align*}
		F_{n,0}(X) :={}& (X+2)^{2^n}+1, 
		\qquad
		F_{n,1}(X) := (X+2)^{2^n}, 
		\\
		P(X) :={}& X+2, 
		\qquad\text{and}\qquad 
		Q(X) := X+1. 
	\end{align*}
	For all $n\in\bZ_{\geq 1}$, these polynomials satisfy 
	\begin{align*}
		F_{n,0}(X) ={}& 1+ (X+2)\prod_{k=0}^{n-1} (X+2)^{2^{k}} = 
		\biggl( \prod_{k=0}^{n-1} F_{k,1}(X) \biggr)P(X) + 1, 
		\\
		F_{n,1}(X) ={}& 1+ ((X+2)-1)\prod_{k=0}^{n-1} ((X+2)^{2^{k}} +1) 
		= 
		\biggl( \prod_{k=0}^{n-1} F_{k,0}(X) \biggr) Q(X) + 1, 
	\end{align*}
	and, since $(X+2)^{2^n} - 2 \in \bZ_{\geq 0}[X]$, 
	\begin{align*}
		F_{n,\frac{1}{2}}(X) :={}&
		F_{n,0}(X) F_{n,1}(X) 
		- F_{n,0}(X) - F_{n,1}(X)
		\\={}& 
		((X+2)^{2^n}+1) ((X+2)^{2^n} - 2) + 1 
		\in{} \bZ_{\geq 0}[X]. 
	\end{align*}
	
	For $m,n\in\bZ_{\geq 0}$ with $n\leq m$, we define the following objects in $\Rep \Gamma$, 
	\begin{align*}
		\phi_n := (\theta\oplus \mathbbm{1}^{\oplus 2})^{\otimes 2^n } , 
		\qquad 
		\phi_{[n,m]}
		:={}& \phi_n \otimes \phi_{n+1} \otimes \cdots \otimes \phi_m , 
		\\
		\psi_n := \phi_n \oplus \mathbbm{1} , 
		\qquad\text{and}\qquad
		\psi_{[n,m]}
		:={}& \psi_n \otimes \psi_{n+1} \otimes \cdots \otimes \psi_m . 
	\end{align*}
	Also, for $m,n\in\bZ_{\geq 1}$ with $n\leq m$, we define the following objects in $\Rep\Gamma$, 
	\begin{align*}
		&
		\vartheta_{n,0} := \mathbbm{1} \otimes \psi_{n} ( \cong F_{n,0}(\theta) ) , 
		\qquad
		\vartheta_{n,\frac{1}{2}} := F_{n,\frac{1}{2}}(\theta) , 
		\qquad
		\vartheta_{n,1} := \mathbbm{1} \otimes \phi_{n} ( \cong F_{n,1}(\theta) ), 
		\\
		&
		\vartheta_{n} := \vartheta_{n,0} \oplus \vartheta_{n,\frac{1}{2}} \oplus \vartheta_{n,1} , 
		\qquad\text{and}\qquad
		\vartheta_{[n,m]} := \vartheta_n \otimes \vartheta_{n+1} \otimes \cdots \otimes \vartheta_m . 
	\end{align*}
	For $n\in\bZ_{\geq 1}$, we take the following unitary morphisms in $\Rep\Gamma$, 
	\begin{align*}
		u_{n,0}\colon \phi_{[1,n]}\otimes \psi_{[1,n]} &\to \vartheta_{[1,n]} , 
		\qquad
		u_{n,1}\colon \psi_{[1,n]}\otimes \phi_{[1,n]} \to \vartheta_{[1,n]} , 
	\end{align*}
	\begin{align*}
		v_{n,0}\colon 
		\mathbbm{1} \oplus \psi_{[1,n]} \otimes Q(\theta) \otimes \psi_0 
		&\to \phi_{n+1}, 
		\qquad 
		v_{n,1}\colon 
		\phi_{[1,n]} \otimes P(\theta) \otimes \phi_0 \oplus \mathbbm{1} 
		\to \psi_{n+1}, 
	\end{align*}
	\begin{align*}
		w_{n,0} \colon 
		\vartheta_{n,\frac{1}{2}} \oplus \vartheta_{n,1} 
		&\to 
		Q(\theta)\otimes\psi_{[0,n]} , 
		\qquad
		w_{n,1} \colon 
		\vartheta_{n,0} \oplus \vartheta_{n,\frac{1}{2}} 
		\to 
		P(\theta)\otimes\phi_{[0,n]} , 
	\end{align*}
	\begin{align*}
		x_{n,0}\colon \psi_{[1,n]} \otimes \mathbbm{1} &\to \mathbbm{1} \otimes \psi_{[1,n]}, 
		\qquad
		x_{n,1}\colon \phi_{[1,n]} \otimes \mathbbm{1} \to \mathbbm{1} \otimes \phi_{[1,n]}. 
	\end{align*}
	Since the unitary group of a finite dimensional C*-algebra is path-connected, there is a norm-continuous path of unitaries 
	\begin{align*}
		&
		U_{n}\colon [0,1]\to 
		\cU\bigl( \Hom_{\Rep\Gamma}( \vartheta_{[1,n+1]}, \vartheta_{[1,n+1]} ) \bigr) , 
	\end{align*}
	such that 
	\begin{align*}
		U_{n}(0) \colon{}& 
		\vartheta_{[1,n+1]} = 
		\vartheta_{[1,n]} \otimes 
		( \vartheta_{n+1,0} \oplus \vartheta_{n+1,\frac{1}{2}} \oplus \vartheta_{n+1,1} ) 
		\\&\xto{u_{n,0}^*\otimes (\id_{\vartheta_{n+1,0}}\oplus w_{n+1,0})} 
		\phi_{[1,n]} \otimes \psi_{[1,n]} \otimes 
		( \mathbbm{1}\otimes\psi_{n+1} \oplus Q(\theta)\otimes \psi_{[0,n+1]} ) 
		\\&\xto{\text{distr.}} 
		( \phi_{[1,n]} \otimes \psi_{[1,n]} \otimes \mathbbm{1} \otimes \psi_{n+1} ) 
		\oplus ( \phi_{[1,n]} \otimes \psi_{[1,n]} \otimes Q(\theta) \otimes \psi_{[0,n+1]} ) 
		\\&
		\xrightarrow[\oplus \id_{\phi_{[1,n]}} \otimes v_{n,0}^+ \otimes \id_{ \psi_{[1,n+1]}} ]{ \id_{\phi_{[1,n]}}\otimes ( ( v_{n,0}^-\otimes \id_{\psi_{[1,n]}}) x_{n,0} ) \otimes \id_{ \psi_{n+1}} } 
		\phi_{[1,n]} \otimes \phi_{n+1} \otimes 
		\psi_{[1,n]} \otimes \psi_{n+1} 
		\\& 
		\xto{u_{n+1,0}}
		\vartheta_{[1,n+1]} , 
	\end{align*}
	and 
	\begin{align*}
		U_{n}(1) \colon{}& 
		\vartheta_{[1,n+1]} = 
		\vartheta_{[1,n]} \otimes 
		( \vartheta_{n+1,0} \oplus \vartheta_{n+1,\frac{1}{2}} \oplus \vartheta_{n+1,1} ) 
		\\&\xto{u_{n,1}^*\otimes (w_{n+1,1}\oplus \id_{\vartheta_{n+1,1}})} 
		\psi_{[1,n]} \otimes \phi_{[1,n]} \otimes 
		( P(\theta)\otimes \phi_{[0,n+1]} \oplus \mathbbm{1}\otimes\phi_{n+1} ) 
		\\&\xto{\text{distr.}} 
		( \psi_{[1,n]} \otimes \phi_{[1,n]} \otimes P(\theta) \otimes \phi_{[0,n+1]} ) 
		\oplus ( \psi_{[1,n]} \otimes \phi_{[1,n]} \otimes \mathbbm{1} \otimes \phi_{n+1} ) 
		\\&
		\xrightarrow[\oplus \id_{\psi_{[1,n]}}\otimes ( ( v_{n,1}^- \otimes \id_{\phi_{[1,n]}}) x_{n,1} ) \otimes \id_{ \phi_{n+1}} ]{ \id_{\psi_{[1,n]}} \otimes v_{n,1}^+ \otimes \id_{ \phi_{[1,n+1]}} } 
		\psi_{[1,n]} \otimes \psi_{n+1} \otimes 
		\phi_{[1,n]} \otimes \phi_{n+1} 
		\\& 
		\xto{u_{n+1,1}}
		\vartheta_{[1,n+1]} , 
	\end{align*}
	where $v_{n,0}^{+}$, $v_{n,1}^{+}$, and $v_{n,i}^{-}$ for $i=0,1$ are the restrictions of $v_{n,0}$, $v_{n,1}$, and $v_{n,i}$ to the direct summands $\psi_{[1,n]}\otimes Q(\theta)\otimes\psi_0$, $\phi_{[1,n]}\otimes P(\theta)\otimes\phi_0$, and $\mathbbm{1}$, respectively, and the maps denoted by $\text{distr.}$ indicate the `distributive' unitary isomorphisms witnessing the compatibility of the monoidal structure and the direct sum of $\Rep \Gamma$. 
	
	\medskip\paragraph{\bf Step 2}
	
	We are going to construct the C*-algebra $A_\infty$ that is $*$-isomorphic to $\cZ$ and the left $\Gamma^{\op}$-action $\gamma$ on $A_\infty$. For $n\in\bZ_{\geq 1}$, we let 
	\begin{align*}
		A_n 
		:= 
		\biggl\{ f\in \cB(\cV_{\vartheta_{[1,n]}}) \otimes C([0,1]) 
		\ \bigg|\ 
		\begin{array}{l}
			\Ad u_{n,0}^* f(0) \in \cB(\cV_{\phi_{[1,n]}})\otimes \id_{\cV_{\psi_{[1,n]}}} , 
			\\
			\Ad u_{n,1}^* f(1) \in \cB(\cV_{\psi_{[1,n]}})\otimes \id_{\cV_{\phi_{[1,n]}}}
		\end{array}
		\biggr\}, 
	\end{align*}
	which is a $\Gamma^{\op}$-equivariant C*-subalgebra of $\cB(\cV_{\vartheta_{[1,n]}}) \otimes C([0,1])$ with the left $\Gamma^{\op}$-action $\gamma_n := \Ad (V_{\vartheta_{[1,n]}} \otimes 1_{C([0,1])})(1_{\ell^\infty(\Gamma)}\otimes (-))$. 
	We have the injective unital $\Gamma^{\op}$-$*$-homomorphism 
	\begin{align*}
		\iota_{n,n+1} \colon \cB(\cV_{\vartheta_{[1,n]}}) \otimes C([0,1]) &\to \cB(\cV_{\vartheta_{[1,n+1]}}) \otimes C([0,1]) 
	\end{align*}
	such that $\iota_{n,n+1}(f)$ sends each $t\in[0,1]$ to 
	\begin{align*}
		&
		\Ad U_n(t) \left( 
		f(\tfrac{t}{2}) \otimes \id_{\vartheta_{n+1,0}} \oplus f(\tfrac{1}{2}) \otimes \id_{\vartheta_{n+1,\frac{1}{2}}} \oplus f(\tfrac{1+t}{2}) \otimes \id_{\vartheta_{n+1,1}} 
		\right), 
	\end{align*}
	for any $f\in \cB(\cV_{\vartheta_{[1,n]}}) \otimes C([0,1])$. 
	We set $\iota_{n,m} := \iota_{m-1,m}\circ\iota_{m-2,m-1}\circ\cdots\circ\iota_{n,n+1}$ 
	for $m,n\in \bZ_{\geq 1}$ with $n\leq m$. Note that $\iota_{n,n}=\id_{\cB(\cV_{\vartheta_{[1,n]}})\otimes C([0,1])}$. 
	We can check that $\iota_{n,m}$ restricts to a unital $\Gamma^{\op}$-$*$-homomorphism $A_{n}\to A_{m}$, for which we still write $\iota_{n,m}$, by using the commutativity of the upper and the lower rectangles in the following diagrams for $n\in\bZ_{\geq 1}$, 
	\begin{align*}
		\begin{tikzcd}[column sep=14em,row sep=4em,ampersand replacement=\&]
			\cB(\cV_{\phi_{[1,n]}}) 
			\arrow[d,"(-)\otimes\id_{\psi_{[1,n]}}"'{name=D1}]
			\arrow[r,"\Ad (\id_{\phi_{[1,n]}}\otimes v_{n,0}^{-}) ((-)\otimes \id_{\mathbbm{1}})"] \& 
			\cB(\cV_{\phi_{[1,n+1]}}) 
			\arrow[d,"(-)\otimes\id_{\psi_{[1,n+1]}}"{name=D2}]
			\\ 
			\cB(\cV_{\phi_{[1,n]}}\otimes\cV_{\psi_{[1,n]}})
			\arrow[r,bend left=6,"\bigl( \Ad \bigl( \id_{\phi_{[1,n]}}\otimes ( ( v_{n,0}^{-}\otimes \id_{\psi_{[1,n]}}) x_{n,0} ) \bigr) ((-) \otimes \id_{\mathbbm{1}}) \bigr) \otimes \id_{\psi_{n+1}}"]
			\arrow[r,bend right=6,"\bigl( \Ad (\id_{\phi_{[1,n]}}\otimes v_{n,0}^{+}) ((-) \otimes \id_{Q(\theta)\otimes \psi_0}) \bigr) \otimes \id_{\psi_{[1,n+1]}}"'] \& 
			\cB(\cV_{\phi_{[1,n+1]}}\otimes\cV_{\psi_{[1,n+1]}}) 
			\\ 
			\cB(\cV_{\phi_{[1,n]}\otimes\psi_{[1,n]}}) 
			\arrow[u,"||"{name=U1}]
			\arrow[r,"\Ad (\id_{\phi_{[1,n]}}\otimes v_{n,0}^{+}) ((-) \otimes \id_{Q(\theta)\otimes \psi_0})"'] \& 
			\cB(\cV_{\phi_{[1,n+1]}}) \ar[u,"(-)\otimes\id_{\psi_{[1,n+1]}}"'{name=U2}]
			\arrow[from=D1, to=D2, phantom, "\circlearrowleft" above]
			\arrow[from=U1, to=U2, phantom, "\circlearrowleft" below]
		\end{tikzcd},
	\end{align*}
	\begin{align*}
		\begin{tikzcd}[column sep=14em,row sep=4em,ampersand replacement=\&]
			\cB(\cV_{\psi_{[1,n]}\otimes\phi_{[1,n]}}) 
			\arrow[d,"||"'{name=D3}]
			\arrow[r,"\Ad (\id_{\psi_{[1,n]}}\otimes v_{n,1}^{+}) ((-) \otimes \id_{P(\theta)\otimes \phi_0})"] \& 
			\cB(\cV_{\psi_{[1,n+1]}}) \ar[d,"(-)\otimes\id_{\phi_{[1,n+1]}}"{name=D4}]
			\\ 
			\cB(\cV_{\psi_{[1,n]}}\otimes\cV_{\phi_{[1,n]}}) 
			\arrow[r,bend left=6,"\bigl( \Ad (\id_{\psi_{[1,n]}}\otimes v_{n,1}^{+}) ((-) \otimes \id_{P(\theta)\otimes \phi_0}) \bigr) \otimes \id_{\phi_{[1,n+1]}}"] \arrow[r,bend right=6,"\bigl( \Ad \bigl( \id_{\psi_{[1,n]}}\otimes ( ( v_{n,1}^{-}\otimes \id_{\phi_{[1,n]}}) x_{n,1} ) \bigr) ((-) \otimes \id_{\mathbbm{1}}) \bigr) \otimes \id_{\phi_{n+1}}"']
			\& 
			\cB(\cV_{\psi_{[1,n+1]}}\otimes\cV_{\phi_{[1,n+1]}}) 
			\\ 
			\cB(\cV_{\psi_{[1,n]}}) 
			\arrow[u,"(-)\otimes\id_{\phi_{[1,n]}}"{name=U3}]
			\arrow[r,"\Ad (\id_{\psi_{[1,n]}}\otimes v_{n,1}^{-}) ((-)\otimes \id_{\mathbbm{1}})"'] \& 
			\cB(\cV_{\psi_{[1,n+1]}}) 
			\arrow[u,"(-)\otimes\id_{\phi_{[1,n+1]}}"'{name=U4}]
			\arrow[from=D3, to=D4, phantom, "\circlearrowleft" above]
			\arrow[from=U3, to=U4, phantom, "\circlearrowleft" below]
		\end{tikzcd}. 
	\end{align*}
	
	For $n\in\bZ_{\geq 1}$, note that $\dim_\bC\cV_{\psi_{[1,n]}} = \prod_{k=1}^{n} ( (\dim_\bC \cV_\theta +2)^{2^{k}} +1 ) $ is coprime to $\dim_\bC\cV_{\phi_{[1,n]}}$, which is a power of $\dim_\bC \cV_\theta +2$. 
	It follows from \cite[Theorem 2]{Jiang-Su1999simple} that $A_{\infty}:=\lim_{n\to\infty}A_n$ is $*$-isomorphic to the Jiang--Su algebra $\cZ$. 
	In fact, our construction non-equivariantly coincides with a specific case of the original construction of $\cZ$ in \cite[Section 2]{Jiang-Su1999simple}. 
	We write $\iota_{n,\infty}\colon A_n\to A_\infty$ for the injective unital $*$-homomorphism induced by the inductive limit. 
	Since each $\iota_{n,n+1}$ is a $\Gamma^{\op}$-$*$-homomorphism, we have the well-defined left $\Gamma^{\op}$-action $\gamma$ on $A_\infty$ such that $\iota_{n,\infty}$ is a $\Gamma^{\op}$-$*$-homomorphism for all $n\in\bZ_{\geq 1}$. 
	
	\medskip\paragraph{\bf Step 3}
	
	To see the outerness of $\gamma$, we extend $\gamma$ to the von Neumann completion $M$ of $A_\infty$ with respect to the faithful tracial state. 
	For $a=(a_n)_n \in \prod_{n=1}^{\infty} \{ 0,1 \}$, we set 
	$\Sigma(a):=\sum_{n=1}^{\infty} a_n 2^{-n} \in [0,1]$ and 
	\begin{align*}
		&
		l(a) := \sup( \{ n\in\bZ_{\geq 1} \mid a_n= 1 \}\cup\{0\} )\in \bZ_{\geq 0}\cup\{\infty\} . 
	\end{align*}
	We define the countable subset 
	\begin{align*}
		&
		J := \biggl\{ a=(a_n)_{n=1}^{\infty} \in \prod_{n=1}^{\infty} \{0,1\} \ \bigg|\  
		1\leq l(a)<\infty
		\biggr\} \subset \prod_{n=1}^{\infty} \{0,1\} , 
	\end{align*}
	and, for $n\in\bZ_{\geq 1}$, the measure $\mu_n$ on $[0,1]$ by 
	\begin{align*}
		&
		\mu_n := 
		\sum_{a=(a_k)_k\in J} 
		\frac{ \dim_\bC \cV_{\vartheta_{n+l(a),\frac{1}{2}}} }{ \dim_\bC \cV_{\vartheta_{n+l(a)}} } 
		\prod_{k=1}^{l(a)-1} 
		\frac{ \dim_\bC \cV_{\vartheta_{n+k,a_k}} }{ \dim_\bC \cV_{\vartheta_{n+k}} } 
		\delta_{ \Sigma (a) } , 
	\end{align*}
	where $\delta_t$ for each $t\in[0,1]$ denotes the Dirac probability measure on $[0,1]$ concentrated at $t$, i.e., $\delta_t(\{t\})=1$. 
	Note that $\mu_n([0,1])=\mu_n(\Sigma(J))$ and in particular $\mu_n(\{0\})=\mu_n(\{1\})=0$. 
	We have $\mu_n([0,1])=1$ because inductively on $l\in\bZ_{\geq 1}$, 
	\begin{align*}
		&
		1- \mu_n (\{ a\in J \mid l(a)\leq l \}) 
		= 
		\prod_{k=1}^{l} \biggl( 1 - \frac{ \dim_\bC \cV_{\vartheta_{n+k,\frac{1}{2}}} }{ \dim_\bC \cV_{\vartheta_{n+k}} } \biggr) 
		\\={}& 
		\prod_{k=1}^{l} \frac{ F_{n+k,0}(\dim_\bC\cV_{\theta}) + F_{n+k,1}(\dim_\bC\cV_{\theta}) }{ F_{n+k,0}(\dim_\bC\cV_{\theta}) F_{n+k,1}(\dim_\bC\cV_{\theta}) } \xto{l\to\infty} 0. 
	\end{align*}
	Note that $\int_{[0,1]}(-) d\mu_n$ is a well-defined state on $C([0,1])$. 
	For $n\in\bZ_{\geq 1}$ and $a=(a_k)_k\in J$, since $l(\Sigma^{-1}(\frac{\Sigma(a)}{2})) = l(\Sigma^{-1}(\frac{1+\Sigma(a)}{2})) = 1+l(a)$,  
	\begin{align}\label{eq_prf_thm_IU_JiangSu_1}
		&
		\begin{aligned}
			&
			\biggl( \frac{ \dim_\bC \cV_{\vartheta_{n+1,0}} + \dim_\bC \cV_{\vartheta_{n+1,1}} }{ \dim_\bC \cV_{\vartheta_{n+1}} } \biggr) \mu_{n+1}(\{ \Sigma(a) \}) 
			\\={}& 
			\sum_{a_0\in\{0,1\}} 
			\frac{ \dim_\bC \cV_{\vartheta_{n+1+l(a),\frac{1}{2}}} }{ \dim_\bC \cV_{\vartheta_{n+1+l(a)}} } 
			\prod_{k=0}^{l(a)-1} 
			\frac{ \dim_\bC \cV_{\vartheta_{n+1+k,a_k}} }{ \dim_\bC \cV_{\vartheta_{n+1+k}} } 
			= 
			\mu_n(\{ \tfrac{\Sigma(a)}{2}, \tfrac{1+\Sigma(a)}{2} \}) 
			\\={}& 
			\biggl( 1 + \frac{ \dim_\bC \cV_{\vartheta_{n+1,1}} }{ \dim_\bC \cV_{\vartheta_{n+1,0}} } \biggr) 
			\mu_n(\{ \tfrac{\Sigma(a)}{2} \}) 
			= 
			\biggl( 1 + \frac{ \dim_\bC \cV_{\vartheta_{n+1,0}} }{ \dim_\bC \cV_{\vartheta_{n+1,1}} } \biggr) 
			\mu_n(\{ \tfrac{1+\Sigma(a)}{2} \})
			. 
		\end{aligned}
	\end{align}
	Since $\mu_n(\{ \Sigma(a) \})>0$ for all $a\in J$, every $\mu_n$ is equivalent to $\mu_1$, and $L^\infty([0,1],\mu_n)=L^\infty([0,1],\mu_1)=\ell^\infty(\Sigma(J))\cong \ell^\infty(J)$. Also, by the density of $\Sigma(J)\subset [0,1]$, the canonical unital $*$-homomorphism $C([0,1])\to L^\infty([0,1],\mu_1)$ is injective, and we identify $C([0,1])\subset \ell^\infty(\Sigma(J))$. 
	
	For $n\in\bZ_{\geq 1}$, we consider the von Neumann algebra $M_n:=\cB(\cV_{\vartheta_{[1,n]}}) \barotimes \ell^\infty(\Sigma(J))$, which contains $A_n$ as a $\sigma$-weakly dense $*$-subalgebra. For $\pi\in\Irr G$, we have the injective unital normal $*$-homomorphism 
	\begin{align*}
		\bar{\gamma}_{\pi,n} 
		:={}& 
		\Ad(V_{\pi,\vartheta_{[1,n]}} \otimes 1_{\ell^\infty(\Sigma(J))}) (1_{\cB(\cH_\pi)} \otimes (-)) 
		\colon 
		M_{n} 
		\to \cB(\cH_\pi)\otimes M_{n} , 
	\end{align*}
	and the normal $*$-homomorphism $\bar{\gamma}_{n}\colon M_n\to \ell^\infty(\Gamma)\barotimes M_n$ such that $(\Pi_\pi\otimes\id_{M_n})\bar{\gamma}_n = \bar{\gamma}_{\pi,n}$ for all $\pi\in\Irr G$ and that $\bar{\gamma}_{n}$ is a left $\Gamma^{\op}$-action on $M_{n}$ normally extending the left $\Gamma^{\op}$-action $\gamma_n$ on $A_n$. 
	For $m,n\in\bZ_{\geq 1}$ with $n\leq m$, the map $\iota_{n,m}\colon A_{n}\to A_{m}$ normally extends to the injective unital normal $*$-homomorphism still denoted by $\iota_{n,m}\colon M_{n} \to M_{m}$ since $\frac{t}{2}$ and $\frac{1+t}{2}$ restrict to maps $\Sigma(J)\to\Sigma(J)$ and any $s\in\Sigma(J)$ is either of the form $\frac{t}{2}$, $\frac{1+t}{2}$, or $\frac{1}{2}$ for some $t\in\Sigma(J)$. This normal extension $\iota_{n,m}$ is a $\Gamma^{\op}$-$*$-homomorphism. 
	We often regard $M_n\subset M_m$ via $\iota_{n,m}$. 
	
	For $n\in\bZ_{\geq 1}$, consider the faithful normal tracial state 
	\begin{align*}
		\tau_n \colon M_n \ni 
		f &\mapsto \int_{[0,1]} \tr_{\cV_{\vartheta_{[1,n]}}}( f(t) ) d\mu_n(t) 
		\in \bC 
	\end{align*}
	and the normal conditional expectation $E_{n,n+1}\colon M_{n+1} \to M_n$ defined by, for $f\in M_{n+1}$, 
	\begin{align*}
		&
		E_{n,n+1}(f)(t):=
		\begin{cases}
			(\id_{\cB(\cV_{\vartheta_{[1,n]}})}\otimes \tr_{\cV_{\vartheta_{n+1,0}}}) ( (U_{n}^*f U_{n})(2t) ) & \text{if $0\leq t < \tfrac{1}{2}$}
			\\
			\int_{[0,1]}
			(\id_{\cB(\cV_{\vartheta_{[1,n]}})}\otimes \tr_{\cV_{\vartheta_{n+1,\frac{1}{2}}}}) ( (U_{n}^*f U_{n})(s) ) d\mu_{n+1}(s) & \text{if $t=\frac{1}{2}$}
			\\
			(\id_{\cB(\cV_{\vartheta_{[1,n]}})}\otimes \tr_{\cV_{\vartheta_{n+1,1}}}) ( (U_{n}^*f U_{n})(2t-1) ) & \text{if $\tfrac{1}{2}< t\leq 1$}
		\end{cases}, 
	\end{align*}
	where, for $i=0,\frac{1}{2},1$ and $x\in\cB(\cV_{\vartheta_{n+1}})$, we abused notation to write $\tr_{\cV_{\vartheta_{n+1,i}}}(x) := \tr_{\cV_{\vartheta_{n+1,i} }}(p_{n+1,i} x p_{n+1,i})$ using the projection $p_{n+1,i}$ from $\cV_{\vartheta_{n+1}}$ onto $\cV_{\vartheta_{n+1,i}}$. 
	For $m,n\in\bZ_{\geq 1}$ with $n\leq m$, we set $E_{n,m}:=E_{n,n+1}\cdots E_{m-1,m}$. In particular, $E_{n,n}=\id_{M_n}$. 
	Using \eqref{eq_prf_thm_IU_JiangSu_1}, we see for all $n\in\bZ_{\geq 1}$ and $f\in M_{n+1}$ that 
	\begin{align*}
		\tau_n E_{n,n+1}(f) 
		&= 
		\int_{[0,\frac{1}{2})}
		(\tr_{\cV_{\vartheta_{[1,n]}}}\otimes \tr_{\cV_{\vartheta_{n+1,0}}}) ( (U_n^*f U_n)(2t) ) d\mu_n(t)
		\\
		&+
		\mu_n(\{\tfrac{1}{2}\}) 
		\int_{[0,1]}
		(\tr_{\cV_{\vartheta_{[1,n]}}}\otimes \tr_{\cV_{\vartheta_{n+1,\frac{1}{2}}}}) ( (U_n^*f U_n)(s) ) d\mu_{n+1}(s) 
		\\
		&+
		\int_{(\frac{1}{2},1]}
		(\tr_{\cV_{\vartheta_{[1,n]}}}\otimes \tr_{\cV_{\vartheta_{n+1,1}}}) ( (U_n^*f U_n)(2t-1) ) d\mu_n(t)
		\\&= 
		\frac{\dim_\bC \cV_{\vartheta_{n+1,0}}}{\dim_\bC \cV_{\vartheta_{n+1}}} 
		\int_{[0,1)}
		(\tr_{\cV_{\vartheta_{[1,n]}}}\otimes \tr_{\cV_{\vartheta_{n+1,0}}}) ( (U_n^*f U_n)(s) ) d\mu_{n+1}(s)
		\\
		&+
		\frac{\dim_\bC \cV_{\vartheta_{n+1,\frac{1}{2}}}}{\dim_\bC \cV_{\vartheta_{n+1}}} 
		\int_{[0,1]}
		(\tr_{\cV_{\vartheta_{[1,n]}}}\otimes \tr_{\cV_{\vartheta_{n+1,\frac{1}{2}}}}) ( (U_n^*f U_n)(s) ) d\mu_{n+1}(s) 
		\\
		&+
		\frac{\dim_\bC \cV_{\vartheta_{n+1,1}}}{\dim_\bC \cV_{\vartheta_{n+1}}} 
		\int_{(0,1]}
		(\tr_{\cV_{\vartheta_{[1,n]}}}\otimes \tr_{\cV_{\vartheta_{n+1,1}}}) ( (U_n^*f U_n)(s) ) d\mu_{n+1}(s)
		\\&= 
		\int_{[0,1]}
		(\tr_{\cV_{\vartheta_{[1,n]}}}\otimes \tr_{\cV_{\vartheta_{n+1}}}) ( (U_n^*f U_n)(t) ) d\mu_{n+1}(t) 
		= \tau_{n+1}(f), 
	\end{align*}
	where for the second last equality we used for $x\in\cB(\cV_{\vartheta_{n+1}})$, 
	\begin{align*}
		&
		\tr_{\cV_{\vartheta_{n+1}}}(x) 
		= 
		\sum_{i\in\{0,\tfrac{1}{2},1\}} \tr_{\cV_{\vartheta_{n+1} }}(x p_{n+1,i})
		= 
		\sum_{i\in\{0,\tfrac{1}{2},1\}} \frac{\dim_\bC \cV_{\vartheta_{n+1,i}}}{\dim_\bC \cV_{\vartheta_{n+1}}} \tr_{\cV_{\vartheta_{n+1,i}}}(p_{n+1,i} x p_{n+1,i}) . 
	\end{align*}
	Thus $\tau_n E_{n,m}=\tau_m$ and $\tau_n=\tau_nE_{n,m}\iota_{n,m}=\tau_m \iota_{n,m}$ for all $m,n\in\bZ_{\geq 1}$ with $n\leq m$. 
	It follows from the faithfulness of $\tau_m$ that $E_{n,m}$ is faithful. 
	(Note that $E_{n,n+1}(A_{n+1})\not\subset A_n$ as $E_{n,n+1}(f)$ might not be continuous at $\tfrac{1}{2}$ for $f\in A_{n+1}$.)
	
	The inductive limit of $\tau_n$ on $A_n$ yields the well-defined tracial state on $A_\infty$ denoted by $\tau_\infty$, which is faithful by the uniqueness of the tracial state on $\cZ$. 
	We write $M:=A_\infty''\subset \cB(L^2(A_\infty,\tau_\infty))$. 
	Then $\tau_\infty$ uniquely extends to the faithful normal tracial state on $M$ still denoted by $\tau_\infty$, and $\iota_{n,\infty}\colon A_n\to M$ normally extends to an injective unital $*$-homomorphism $M_n\to M$ still denoted by $\iota_{n,\infty}$ for each $n\in\bZ_{\geq 1}$. 
	We often regard $M_n\subset M$ via $\iota_{n,\infty}$. 
	Also, for $n\in\bZ_{\geq 1}$, we have the normal conditional expectation $E_{n,\infty}\colon M\to M_n$ with $\tau_n E_{n,\infty}=\tau_\infty$ such that $E_{n,\infty}(x)=E_{n,m}(x)$ for $x\in M_m$ with $m\in\bZ_{\geq n}$ by considering the orthogonal projection from $L^2(M,\tau_\infty)$ onto the closed subspace $L^2(M_n,\tau_\infty)$. 
	Then, $E_{n,\infty}(x)\to x$ as $n\to\infty$ in the weak topology of $L^2(M,\tau_\infty)$. 
	
	For each $\pi\in\Irr G$, fixing an orthonormal basis $(e_i)_{i=1}^{\dim_\bC\cH_\pi}$ of $\cH_\pi$ and setting $v_{i,j}\in \cB(\cV_{\vartheta_{[1,n]}})$ for the $(i,j)$-th component of $V_{\pi,\vartheta_{[1,n]}}\in \cB(\cH_\pi\otimes\cV_{\vartheta_{[1,n]}})$ with respect to $(e_i)_i$, we check that for all $n\in\bZ_{\geq 1}$, $f\in A_n$, and $k,l\in\{1,\cdots,\dim_\bC\cH_\pi\}$, 
	\begin{align*}
		&
		\bra e_l, (\id_{\cB(\cH_\pi)} \otimes\tau_\infty) \gamma_{\pi}\iota_{n,\infty}(f) e_k \ket 
		\\={}& 
		\Big\bra e_l, (\id_{\cB(\cH_\pi)} \otimes\tau_n) \bigl(  (V_{\pi,\vartheta_{[1,n]}} \otimes 1_{\ell^\infty(\Sigma(J))}) (1_{\cB(\cH_\pi)}\otimes f)  (V_{\pi,\vartheta_{[1,n]}}^* \otimes 1_{\ell^\infty(\Sigma(J))}) \bigr) e_k \Big\ket 
		\\={}&
		\sum_{i=1}^{\dim_\bC\cH_{\pi}} 
		\tau_n \bigl( (v_{l,i}\otimes 1_{\ell^\infty(\Sigma(J))}) f (v_{k,i}^*\otimes 1_{\ell^\infty(\Sigma(J))}) \bigr) 
		\\={}& 
		\sum_{i=1}^{\dim_\bC\cH_{\pi}} 
		\tau_n ( (v_{k,i}^*v_{l,i}\otimes 1_{\ell^\infty(\Sigma(J))}) f ) 
		=
		\delta_{k,l} \tau_n ( f ) 
		= 
		\bra e_l, \tau_\infty\iota_{n,\infty}(f) e_k \ket , 
	\end{align*}
	where we used the biunitarity of $V_{\pi,\vartheta_{[1,n]}}$ by \autoref{rem_RFD_Kac} at the second last equality. 
	Since $A_\infty = \cspan\bigcup_{n\in\infty}\iota_{n,\infty}(A_n)$, we see that $\gamma$ is $\tau_\infty$-preserving. 
	Thus, by \autoref{lem_tr_pres_action}, we see that $\gamma$ uniquely extends to the left $\Gamma^{\op}$-action on $M=A_{\infty}''\subset \cB(L^2(A,\tau_\infty))$ denoted by $\bar{\gamma}$, which is again $\tau_\infty$-preserving. For $n\in\bZ_{\geq 1}$, we have that $\iota_{n,\infty}\colon M_{n}\to M$ is an injective unital normal $\Gamma^{\op}$-$*$-homomorphism by the normality of $\bar{\gamma}$ and $\bar{\gamma}_n$. 
	
	Note that $V_{\pi,\vartheta_{[1,n+1]}}\otimes1_{\ell^\infty(\Sigma(J))}$ commutes with $\id_{\cH_\pi}\otimes U_n$ since each $U_n(t)$ is a morphism in $\Rep\Gamma$. 
	Using the definition of $E_{n,m}$ and the biunitarity of $V_{\pi,\vartheta_{k,i}}$ for $\pi\in\Irr G$, $k\in\bZ_{\geq 1}$, and $i\in\{0,\tfrac{1}{2},1\}$, we can check that $(\id_{\cB(\cH_\pi)}\otimes E_{n,m}) \bar{\gamma}_{\pi,m} = \bar{\gamma}_{\pi,n}E_{n,m}$ for all $m,n\in\bZ_{\geq 1}$ with $n\leq m$. 
	It follows from the normality that $(\id_{\cB(\cH_\pi)}\otimes E_{n,\infty}) \bar{\gamma}_{\pi} = \bar{\gamma}_{\pi,n}E_{n,\infty}$. 
	
	\medskip\paragraph{\bf Step 4}
	
	We show the outerness of the left $\Gamma^{\op}$-action $\gamma$ on $M$, which is divided into \autoref{clm_thm_IU_JiangSu_inv} and \autoref{clm_thm_IU_JiangSu_irr} below. 
	For $\pi\in\Rep G$, we consider the finite dimensional linear subspace 
	\begin{align*}
		&
		C_\pi := \{ (\omega\otimes\id_{C(G)})(V_\pi) \mid \omega\in\cB(\cH_\pi)_* \}\subset\cO(G) . 
	\end{align*}
	Note that for any $\varphi\in\Rep\Gamma$ and $\omega\in\cB(\cH_\pi)_*$, we have 
	\begin{align}\label{eq_prf_thm_IU_JiangSu_2}
		&
		\Pi_\varphi( (\omega\otimes\id_{C(G)})(V_\pi) ) = (\omega\otimes\id_{\cB(\cV_\varphi)})(V_{\pi,\varphi}). 
	\end{align}
	
	\begin{clm}\label{clm_thm_IU_JiangSu_inv}
		For any $\pi\in\Irr G\setminus\{\mathbbm{1}\}$ that is invertible, the normal $*$-automorphism $\bar{\gamma}_\pi\colon M\to \cB(\cH_\pi)\barotimes M \cong M$ via the identification $\cB(\cH_\pi)\cong \bC$ is not inner. 
	\end{clm}
	
	We show \autoref{clm_thm_IU_JiangSu_inv}. 
	Note that we have $C_{\mathbbm{1}\oplus\pi} = \bC1_{\cO(G)} \oplus C_\pi$ by $\mathbbm{1}\neq \pi$. 
	By our choice of $\theta$, we can take $r\in\bZ_{\geq 1}$ such that $\bigoplus_{k=1}^{r} \Pi_{\theta^{\otimes k}} \colon C_{\mathbbm{1}\oplus\pi} \subset \cO(G) \to \bigoplus_{k=1}^{r} \cB(\cV_{\theta^{\otimes k}})$ and thus $\Pi_{\phi_{r}} \colon C_{\mathbbm{1}\oplus\pi} \subset \cO(G) \to \cB(\cV_{\phi_{r}})$ are injective. 
	Since $V_{\mathbbm{1},\phi_{r}}=1\in \cZ( \cB(\cH_{\mathbbm{1}}\otimes\cV_{\phi_{r}})) \cong \cZ(\cB(\cV_{\phi_{r}}))$, it follows from \eqref{eq_prf_thm_IU_JiangSu_2} that $V_{\pi,\phi_{r}}\in \cU(\cH_\pi\otimes\cV_{\phi_{r}})$ is not contained in $\bC1= \cZ(\cB(\cV_{\phi_{r}}))\cong \cZ(\cB(\cH_\pi\otimes\cV_{\phi_{r}}))$. 
	Thus, $c:= |\tr_{\cV_{\phi_{r}}}(V_{\pi,\phi_{r}})|<1$. Then, for any $n\in\bZ_{\geq 1}$ with $n\geq r$, 
	by taking $\varphi := (\theta\oplus\mathbbm{1}^{\oplus 2})^{\otimes 2^{n} - 2^{r}} \in \Rep\Gamma$ and $\varphi' \in \Rep\Gamma$ such that $\varphi'\oplus\mathbbm{1}^{\oplus2}\cong \phi_{n}$, 
	we see that 
	\begin{align*}
		&
		|\tr_{\cV_{\vartheta_{n,1}}}(V_{\pi,\vartheta_{n,1}})| 
		= 
		|\tr_{\cV_{\phi_{n}}}(V_{\pi,\phi_{n}})| 
		= 
		|\tr_{\cV_{\phi_{r}}}(V_{\pi,\phi_{r}})| |\tr_{\cV_{\varphi}}(V_{\pi,\varphi})| 
		\leq c, 
	\end{align*}
	\begin{align*}
		|\tr_{\cV_{\vartheta_{n,0}}}(V_{\pi,\vartheta_{n,0}})| 
		={}& 
		|\tr_{\cV_{\psi_{n}}}(V_{\pi,\psi_{n}})| 
		\\={}& 
		\biggl| \tr_{\cV_{\phi_{n}}}(V_{\pi,\phi_{n}}) \frac{\dim_\bC\cV_{\phi_{n}}}{1+\dim_\bC\cV_{\phi_{n}}} + \frac{1}{1+\dim_\bC\cV_{\phi_{n}}} \biggr| 
		\\\leq{}& 
		c\frac{\dim_\bC\cV_{\phi_{n}}}{1+\dim_\bC\cV_{\phi_{n}}} + \frac{1}{1+\dim_\bC\cV_{\phi_{n}}} 
		\leq
		\frac{1+c}{2} <1, 
	\end{align*}
	\begin{align*}
		|\tr_{\cV_{\vartheta_{n,\frac{1}{2}}}}(V_{\pi,\vartheta_{n,\frac{1}{2}}})| 
		={}& 
		\biggl| \tr_{\cV_{\psi_{n}}}(V_{\pi,\psi_{n}}) \tr_{\cV_{\varphi'}}(V_{\pi,\varphi'}) 
		\frac{\dim_\bC\cV_{\vartheta_{n,\frac{1}{2}}} -1}{\dim_\bC\cV_{\vartheta_{n,\frac{1}{2}}}} + \frac{1}{\dim_\bC\cV_{\vartheta_{n,\frac{1}{2}}}} \biggr| 
		\\\leq{}& 
		\frac{1+c}{2}\frac{\dim_\bC\cV_{\vartheta_{n,\frac{1}{2}}} -1}{\dim_\bC\cV_{\vartheta_{n,\frac{1}{2}}}} + \frac{1}{\dim_\bC\cV_{\vartheta_{n,\frac{1}{2}}}} 
		\leq
		\frac{3+c}{4}<1 , 
	\end{align*}
	where we have used $\dim_\bC\cV_{\phi_{n}}\geq 1$ and $\dim_\bC\cV_{\vartheta_{n,\frac{1}{2}}}\geq 2$. 
	Thus, for all $n\in\bZ_{\geq 1}$ with $n\geq r$ and $i\in\{0,\frac{1}{2},1\}$, we have $|\tr_{\cV_{\vartheta_{n,i}}}(V_{\pi,\vartheta_{n,i}})| \leq c':=\frac{3+c}{4}<1$.

	Note that $\bar{\gamma}_\pi \in \Aut(M)$ is the normal extension of the limit of $\Ad V_{\pi,\vartheta_{[1,n]}} =\Ad \bigotimes_{k=1}^{n} V_{\pi,\vartheta_{k}} \in \Aut(\cB(\cV_{\vartheta_{[1,n]}}))$ via $\cB(\cH_\pi)\cong \bC$. 
	We suppose that there is $\wt{V}\in \cU(M)$ such that $\bar{\gamma}_\pi=\Ad \wt{V}$ to derive a contradiction. 
	For any $n\in\bZ_{\geq 1}$ and $x\in M_n$, we have $x V_{\pi,\vartheta_{[1,n]}}^* E_{n,\infty}(\wt{V}) = V_{\pi,\vartheta_{[1,n]}}^* E_{n,\infty}(\wt{V}) x$ by $E_{n,\infty}\bar{\gamma}_\pi=\bar{\gamma}_{\pi,n} E_{n,\infty}$ to see that 
	\begin{align*}
		&
		f_n := V_{\pi,\vartheta_{[1,n]}}^* E_{n,\infty}(\wt{V}) \in \cZ( M_n ) = \id_{\cV_{\vartheta_{[1,n]}}} \otimes \ell^\infty(\Sigma(J)) . 
	\end{align*}
	Note that $\| f_n \|\leq 1$ by definition. 
	Identifying $\cB(\cH_\pi)\cong \bC$ and using $V_{\pi,\vartheta_{[1,n+1]}}$ commutes with $U_n(t)$ for all $t\in\Sigma(J)$, we see that 
	\begin{align*}
		&
		\|f_{n}\| 
		= \| V_{\pi,\vartheta_{[1,n]}}^* E_{n,n+1}E_{n+1,\infty}( \wt{V} ) \| 
		= \| V_{\pi,\vartheta_{[1,n]}}^* E_{n,n+1}( V_{\pi,\vartheta_{[1,n+1]}} f_{n+1} ) \| 
		\\\leq{}& 
		\max\{ \| (\id_{\cB(\cV_{\vartheta_{[1,n]}})}\otimes \tr_{\cV_{\vartheta_{n+1,i}}}\otimes \id_{\ell^\infty(\Sigma(J))}) (U_n^* V_{\pi,\vartheta_{[1,n+1]}} f_{n+1} U_n ) \| \mid i=0,\tfrac{1}{2},1 \} 
		\\={}& 
		\| f_{n+1} \| V_{\pi,\vartheta_{[1,n]}} \| 
		\max\{ | \tr_{\cV_{\vartheta_{n+1,i}}}( V_{\pi,\vartheta_{n+1,i}} ) | \mid i=0,\tfrac{1}{2},1 \} 
		\leq
		\| f_{n+1} \| c' .
	\end{align*}
	Thus, for any $n,k\in\bZ_{\geq 1}$ with $n\geq r$, we see that $\| f_n \| \leq \| f_{n+k} \| c^{\prime k} \leq c^{\prime k} \xto{k\to\infty} 0$. 
	It follows that $E_{n,\infty}(\wt{V})= V_{\pi,\vartheta_{[1,n]}} f_n = 0$ for all $n\in \bZ_{\geq r}$ and thus that $\wt{V}=0$ by the weak convergence, which contradicts $\wt{V}\in\cU(M)$ as desired. 
	Thus, \autoref{clm_thm_IU_JiangSu_inv} holds. 
	In particular, there is no unitary $\wt{V}\in\cU(A_\infty)$ such that $\Ad \wt{V} = \gamma_{\pi} \colon A_\infty\to \cB(\cH_\pi)\otimes A_\infty\cong A_\infty$ by the normality of $\bar{\gamma}_\pi$.

	\begin{clm}\label{clm_thm_IU_JiangSu_irr}
		For each $\pi\in\Irr G$, we have $\bar{\gamma}_\pi(M)'\cap \cB(\cH_\pi)\otimes M=\bC1_{\cB(\cH_\pi)\otimes M}$. 
	\end{clm}
	
	We show \autoref{clm_thm_IU_JiangSu_irr}. 
	By our choice of $\theta$, for any $F(X)\in\bZ_{\geq 0}[X]\setminus \bZ_{\geq 0}$, we can take $a\in \bZ_{\geq 1}$ such that $\Pi_{F(\theta)^{\otimes n}}\colon C_\pi\subset \cO(G)\to \cB(\cV_{F(\theta)^{\otimes n}})$ is injective for all $n\in\bZ_{\geq 1}$ with $n\geq a$. 
	Thus, letting $F(X):=X+2$ and using \eqref{eq_prf_thm_IU_JiangSu_2}, we can take $a\in\bZ_{\geq 1}$ such that the unitary $V_{\pi,\phi_a}\in\cU(\cH_\pi\otimes\cV_{\phi_a})$ satisfies the assumption of \autoref{cor_quantum_channel_irred} because $\dim_\bC C_\pi = \dim_\bC \cB(\cH_\pi)$ by Schur's orthogonality. 
	It follows from \autoref{cor_quantum_channel_irred} \ref{item_cor_quantum_channel_irred_4} that $\rho:= \| \Theta_{V_{\pi,\phi_b}} - \wt{\tr}_{\cH_\pi} \| \in [0,1)$ for some $b\in\bZ_{\geq 1}$ with $b\geq a$. 
	Then, for any $n\in\bZ_{\geq 1}$ with $n\geq b$, 
	by taking $\varphi \in \Rep\Gamma$ such that $\varphi\oplus\mathbbm{1}^{\oplus2}\cong \phi_{n}$, 
	using \autoref{lem_quantum_channel_irred} we see that 
	\begin{align*}
		&
		\| \Theta_{V_{\pi,\vartheta_{n,1}}} - \wt{\tr}_{\cH_\pi} \| 
		= 
		\| \Theta_{V_{\pi,\phi_{n}}} - \wt{\tr}_{\cH_\pi} \| 
		= 
		\| (\Theta_{V_{\pi,\phi_{b}}} - \wt{\tr}_{\cH_\pi})^{2^{n-b}} \| 
		\leq \rho^{2^{n-b}}, 
	\end{align*}
	\begin{align*}
		&\| \Theta_{V_{\pi,\vartheta_{n,0}}} - \wt{\tr}_{\cH_\pi} \| 
		= \| \Theta_{V_{\pi,\psi_{n}}} - \wt{\tr}_{\cH_\pi} \| 
		\\={}& 
		\biggl\| ( \Theta_{V_{\pi,\phi_{n}}} - \wt{\tr}_{\cH_\pi} ) \frac{\dim_\bC\cV_{\phi_{n}}}{1+\dim_\bC\cV_{\phi_{n}}} 
		+ ( \Theta_{V_{\pi,\mathbbm{1}}} - \wt{\tr}_{\cH_\pi} ) \frac{1}{1+\dim_\bC\cV_{\phi_{n}}} \biggr\| 
		\\\leq{}& 
		\rho^{2^{n-b}} + \frac{2}{1+\dim_\bC\cV_{\phi_{n}}} , 
	\end{align*}
	\begin{align*}
		&\| \Theta_{V_{\pi,\vartheta_{n,\frac{1}{2}}}} - \wt{\tr}_{\cH_\pi} \| 
		\\={}& 
		\biggl\| ( \Theta_{V_{\pi,\varphi}} - \wt{\tr}_{\cH_\pi} ) ( \Theta_{V_{\pi,\psi_{n}}} - \wt{\tr}_{\cH_\pi} ) 
		\frac{\dim_\bC\cV_{\vartheta_{n,\frac{1}{2}}} -1}{\dim_\bC\cV_{\vartheta_{n,\frac{1}{2}}}} + ( \Theta_{V_{\pi,\mathbbm{1}}} - \wt{\tr}_{\cH_\pi} ) \frac{1}{\dim_\bC\cV_{\vartheta_{n,\frac{1}{2}}}} \biggr\| 
		\\\leq{}& 
		2\biggl( \rho^{2^{n-b}} + \frac{2}{1+\dim_\bC\cV_{\phi_{n}}} \biggr) 
		+ \frac{2}{\dim_\bC\cV_{\vartheta_{n,\frac{1}{2}}}} , 
	\end{align*}
	and that the three rightmost expressions converges to $0$ as $b\leq n\to\infty$. Thus, there are $\wt{c}\in[0,1)$ and $r\in\bZ_{\geq 1}$ with $r\geq b$ such that 
		$\| \Theta_{V_{\pi,\vartheta_{n,i}}} - \wt{\tr}_{\cV_{\cH_\pi}} \| 
		\leq \wt{c} <1$ 
	for all $n\in\bZ_{\geq 1}$ with $n\geq r$ and $i\in\{0,\frac{1}{2},1\}$. 
	
	We take $x\in \bar{\gamma}_\pi(M)'\cap \cB(\cH_\pi)\barotimes M$ arbitrarily. 
	For $n\in\bZ_{\geq 1}$ with $n\geq r$, we let 
	\begin{align*}
		y_n 
		:={}& 
		(\id_{\cB(\cH_\pi)} \otimes \tr_{\cV_{\vartheta_{[1,n]}}} \otimes \id_{\ell^\infty(\Sigma(J))}) \Ad (V_{\pi,\vartheta_{[1,n]}}^{*} \otimes 1_{\ell^\infty(\Sigma(J))}) (\id_{\cB(\cH_\pi)}\otimes E_{n,\infty})(x) 
		\\\in{}& 
		\cB(\cH_\pi) \barotimes \ell^\infty(\Sigma(J)) .
	\end{align*}
	Since 
	\begin{align*}
		&
		\Ad (V_{\pi,\vartheta_{[1,n]}}^{*} \otimes 1_{\ell^\infty(\Sigma(J))}) (\id_{\cB(\cH_\pi)}\otimes E_{n,\infty})(x) 
		\\\in{}& 
		( \id_{\cH_\pi} \otimes M_{n} )' \cap ( \cB(\cH_\pi) \otimes M_{n} ) 
		= 
		\cB(\cH_\pi) \otimes \id_{\cV_{\vartheta_{[1,n]}}} \otimes \ell^\infty(\Sigma(J)) 
	\end{align*}
	by $(\id_{\ell^\infty(\Gamma)}\otimes E_{n,\infty})\bar{\gamma} = \bar{\gamma}_{n}E_{n,\infty}$, 
	we have that, using the leg notation, 
	\begin{align}\label{eq_clm2_thm_IU_JiangSu_2}
		&
		\begin{aligned}
		&
		\Ad (V_{\pi,\vartheta_{[1,n]}} \otimes 1_{\ell^\infty(\Sigma(J))}) (\id_{\cV_{\vartheta_{[1,n]}}}\otimes y_n)_{213} 
		\\={}& 
		(\id_{\cB(\cH_\pi)}\otimes E_{n,\infty})(x) 
		= 
		(\id_{\cB(\cH_\pi)}\otimes E_{n,n+1}E_{n+1,\infty})(x) 
		\\={}& 
		(\id_{\cB(\cH_\pi)}\otimes E_{n,n+1}) \Ad (V_{\pi,\vartheta_{[1,n+1]}} \otimes 1_{\ell^\infty(\Sigma(J))}) (\id_{\cV_{\vartheta_{[1,n+1]}}}\otimes y_{n+1})_{213} . 
		\end{aligned}
	\end{align}
	Since $\id_{\cH_\pi}\otimes U_n$ commutes with $V_{\pi,\vartheta_{[1,n+1]}} \otimes 1_{\ell^\infty(\Sigma(J))}$ and $(\id_{\cV_{\vartheta_{[1,n+1]}}} \otimes y_{n+1})_{213}$, 
	using \eqref{eq_clm2_thm_IU_JiangSu_2} and the definition of $E_{n,n+1}$, we see that for any $(t,i)\in \bigl( \Sigma(J)\cap [0,\frac{1}{2}) \bigr) \times \{0\} \sqcup \bigl( \Sigma(J)\cap (\frac{1}{2},1] \bigr) \times \{1\}$, 
	\begin{align*}
		y_{n}(t) 
		={}& 
		(\id_{\cB(\cH_\pi)}\otimes \tr_{\cV_{\vartheta_{n+1,i}}}) \Ad V_{\pi,\vartheta_{n+1,i}} ( y_{n+1}(2t-i)\otimes \id_{\cV_{\vartheta_{n+1,i}}} ) 
		\\={}& 
		\Theta_{V_{\pi,\vartheta_{n+1,i}}} (y_{n+1}(2t-i)) , 
	\end{align*}
	and 
	\begin{align*}
		y_{n}(\tfrac{1}{2}) 
		={}& 
		\int_{[0,1]} 
		(\id_{\cB(\cH_\pi)}\otimes \tr_{\cV_{\vartheta_{n+1,\frac{1}{2}}}}) \Ad V_{\pi,\vartheta_{n+1,\frac{1}{2}}} ( y_{n+1}(s)\otimes \id_{\cV_{\vartheta_{n+1,\frac{1}{2}}}} ) 
		d\mu_{n+1}(s) 
		\\={}& 
		\int_{[0,1]} 
		\Theta_{V_{\pi,\vartheta_{n+1,\frac{1}{2}}}} (y_{n+1}(s)) 
		d\mu_{n+1}(s) . 
	\end{align*}
	We define $z_n \in \ell^\infty(\Sigma(J))$ by 
	\begin{align*}
		z_n 
		:={}& 
		(\tr_{\cH_{\pi}}\otimes\id_{\ell^\infty(\Sigma(J))})(y_n) 
		\\={}& 
		(\tr_{\cH_{\pi}} \otimes \tr_{\cV_{\vartheta_{[1,n]}}} \otimes \id_{\ell^\infty(\Sigma(J))}) \Ad (V_{\pi,\vartheta_{[1,n]}}^{*} \otimes 1_{\ell^\infty(\Sigma(J))}) (\id_{\cB(\cH_\pi)}\otimes E_{n,\infty})(x) 
		\\={}& 
		(\tr_{\cV_{\vartheta_{[1,n]}}} \otimes \id_{\ell^\infty(\Sigma(J))})(\tr_{\cH_{\pi}}\otimes E_{n,\infty})(x) . 
	\end{align*}
	We see that 
	for any $(t,i)\in \bigl( \Sigma(J)\cap [0,\frac{1}{2}) \bigr) \times \{0\} \sqcup \bigl( \Sigma(J)\cap (\frac{1}{2},1] \bigr) \times \{1\}$, 
	\begin{align}\label{eq_clm2_thm_IU_JiangSu_3}
		&
		\begin{aligned}
			z_n(t) 
			={}& 
			\tr_{\cH_{\pi}}(y_n(t)) 
			= 
			\tr_{\cH_{\pi}}\Theta_{V_{\pi,\vartheta_{n+1,i}}} (y_{n+1}(2t-i)) 
			\\={}& 
			\tr_{\cH_{\pi}} (y_{n+1}(2t-i)) 
			= 
			z_{n+1}(2t-i) , 
		\end{aligned}
	\end{align}
	and thus 
	\begin{align*}
		&
		\| y_{n}(t) - z_n(t)\id_{\cH_\pi} \| 
		= 
		\| y_{n}(t) - z_{n+1}(2t-i)\id_{\cH_\pi} \| 
		\\={}& 
		\| \Theta_{V_{\pi,\vartheta_{n+1,i}}} (y_{n+1}(2t-i) - z_{n+1}(2t-i)\id_{\cH_\pi}) \| 
		\\={}& 
		\| (\Theta_{V_{\pi,\vartheta_{n+1,i}}}-\wt{\tr}_{\cH_\pi}) (y_{n+1}(2t-i) - z_{n+1}(2t-i)\id_{\cH_\pi}) \| 
		\\\leq{}& 
		\wt{c} \sup\{ \| y_{n+1}(s) - z_{n+1}(s)\id_{\cH_\pi} \| \mid s\in\Sigma(J) \} . 
	\end{align*}
	We also have 
	\begin{align}\label{eq_clm2_thm_IU_JiangSu_4}
		&
		\begin{aligned}
			z_n(\tfrac{1}{2}) 
			={}& 
			\tr_{\cH_{\pi}}(y_n(\tfrac{1}{2})) 
			= 
			\tr_{\cH_{\pi}} \biggl( \int_{[0,1]} 
			\Theta_{V_{\pi,\vartheta_{n+1,\frac{1}{2}}}} (y_{n+1}(s)) 
			d\mu_{n+1}(s) \biggr) 
			\\={}& 
			\int_{[0,1]} 
			\tr_{\cH_{\pi}} (y_{n+1}(s)) 
			d\mu_{n+1}(s) 
			= 
			\int_{[0,1]} z_{n+1}(s) d\mu_{n+1}(s) 
			\\={}& 
			\tau_{n+1}(\tr_{\cH_\pi}\otimes E_{n+1,\infty}) (x) 
			= 
			(\tr_{\cH_\pi}\otimes \tau_{\infty}) (x) , 
		\end{aligned}
	\end{align}
	and thus 
	\begin{align*}
		&
		\| y_{n}(\tfrac{1}{2}) - z_n(\tfrac{1}{2})\id_{\cH_\pi} \| 
		= 
		\biggl\| \int_{[0,1]} \Theta_{V_{\pi,\vartheta_{n+1,\frac{1}{2}}}} (y_{n+1}(s) - z_{n+1}(s)\id_{\cH_\pi}) d\mu_{n+1}(s) \biggr\|
		\\\leq{}& 
		\biggl\| \int_{[0,1]} (\Theta_{V_{\pi,\vartheta_{n+1,\frac{1}{2}}}} - \wt{\tr}_{\cH_\pi}) (y_{n+1}(s) - z_{n+1}(s)\id_{\cH_\pi}) d\mu_{n+1}(s) \biggr\| 
		\\\leq{}& 
		\wt{c} \sup\{ \| y_{n+1}(s) - z_{n+1}(s)\id_{\cH_\pi} \| \mid s\in\Sigma(J) \} . 
	\end{align*}
	Thus, for $n,k\in\bZ_{\geq 1}$ with $n\geq r$, 
	\begin{align*}
		&
		\sup \{ \| y_{n}(t) - z_n(t)\id_{\cH_\pi}\| \mid t\in \Sigma(J) \} 
		\\\leq{}& 
		\wt{c}^{k} \sup \{ \| y_{n+k}(t) - z_{n+k}(t)\id_{\cH_\pi}\| \mid  t\in \Sigma(J) \} 
		\\\leq{}& 
		\wt{c}^{k} (\| x \| + \| x \|)
		\xto{k\to\infty} 0, 
	\end{align*}
	and we see that $y_{n} = z_n\id_{\cH_\pi}\in \id_{\cH_\pi}\otimes \ell^\infty(\Sigma(J))$. 
	
	Moreover, inductively on $l(a)$, it follows from \eqref{eq_clm2_thm_IU_JiangSu_3} and \eqref{eq_clm2_thm_IU_JiangSu_4} that $z_n(\Sigma(a))=z_{n+l(a)-1}(\tfrac{1}{2}) = (\tr_{\cH_\pi}\otimes \tau_\infty)(x)$ for all $a\in J$ and $n\in\bZ_{\geq 1}$ with $n\geq r$. 
	In particular, $z_n=z_r$. 
	Therefore, for all $n\in\bZ_{\geq 1}$ with $n\geq r$, we have $y_n=z_r\id_{\cH_\pi}$ and thus, by \eqref{eq_clm2_thm_IU_JiangSu_2}, 
	\begin{align*}
		&
		(\id_{\cH_\pi}\otimes E_{n,\infty})(x) = z_r \id_{\cH_\pi}\otimes \id_{\cV_{\vartheta_{[1,n]}}}\otimes 1_{\ell^\infty(\Sigma(J))} , 
	\end{align*}
	which implies $x=z_r 1_M\in \id_{\cH_\pi}\otimes\bC1_{M}$ by the weak convergence. 
	Therefore, \autoref{clm_thm_IU_JiangSu_irr} holds. 
	
	
	By \autoref{clm_thm_IU_JiangSu_inv} and \autoref{clm_thm_IU_JiangSu_irr}, the left $\Gamma^{\op}$-action $\bar{\gamma}$ on $M$ is outer, and thus $\gamma$ is outer as a left $\Gamma^{\op}$-action on $A_\infty$. This completes the proof of \autoref{thm_IU_JiangSu}. 
	Here, note that $M$ is a factor by \autoref{clm_thm_IU_JiangSu_irr} applied to $\pi=\mathbbm{1}$. 
\end{proof}

\begin{thm}\label{thm_IU_JiangSu_op}
	Let $G$ be a compact quantum group with $\theta\in\Rep\Gamma$ that is tensorially faithful. 
	Then, for any C*-algebra $A$ with $A'\cap\cM(A)\cong\bC$ and $\cZ\otimes A\cong A$, there is an outer left $\Gamma$-action on $A$. 
\end{thm}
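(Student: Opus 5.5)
We first pass from $\Gamma^{\op}$ to $\Gamma$, then tensor the action from \autoref{thm_IU_JiangSu} with the trivial action on $A$.

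\textbf{Passing to $\Gamma^{\op}$.} By \autoref{rem_tensor_faith_op}, the existence of a tensorially faithful $\theta\in\Rep\Gamma$ implies the existence of a tensorially faithful representation of the dual of $G^{\op}$, i.e.\ of $(\Gamma^{\op})^{\op}$ up to the identification $(\wc{G})^{\op}\cong(G^{\op})^{\vee}$. Applying \autoref{thm_IU_JiangSu} to $G^{\op}$ therefore gives a trace-preserving outer left $\Gamma$-action $\gamma$ on $\cZ$. It extends to the tracial von Neumann completion $M=\pi_\tau(\cZ)''$, which is a factor by the remark at the end of that proof. The two properties we will use are those of Claims \ref{clm_thm_IU_JiangSu_inv} and \ref{clm_thm_IU_JiangSu_irr} (with $\Gamma$ in place of $\Gamma^{\op}$):
\begin{itemize}
	\item for every irreducible $\pi$, $\bar\gamma_\pi(M)'\cap(\cB(\cH_\pi)\otimes M)=\bC$;
	\item for every non-trivial invertible $\pi$, $\bar\gamma_\pi$ is not inner.
\end{itemize}

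\textbf{The action on $A$.} Fix an isomorphism $A\cong\cZ\otimes A$ and define $\alpha:=\gamma\otimes\id_A$ on $\cZ\otimes A$, a left $\Gamma$-action. For outerness we must check, for each $\pi\in\Irr G$, that the relative commutant of $\alpha_\pi(\cZ\otimes A)$ in $\cB(\cH_\pi)\otimes\cM(\cZ\otimes A)$ is trivial, and that $\alpha_\pi$ is not implemented by a unitary when $\pi\neq\mathbbm{1}$ is invertible. This follows the notion of outerness recalled in \autoref{sec_prelim_action}, which I take to be formulated via relative commutants in the multiplier algebra, matching the hypothesis $A'\cap\cM(A)\cong\bC$.

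\textbf{Main step: the relative commutant.} Let $x\in\cB(\cH_\pi)\otimes\cM(\cZ\otimes A)$ commute with $\alpha_\pi(\cZ\otimes A)$. First, $x$ commutes with $1\otimes1\otimes A$. A slice-map argument using a conditional expectation, or alternatively the tracial GNS representation of $\cZ$ tensored with a faithful representation of $A$, places $x$ in
\[
\cB(\cH_\pi)\barotimes M\barotimes(A'\cap A'')
\]
inside $\cB(\cH_\pi)\otimes\cB(L^2(\cZ))\barotimes\cB(\cH_A)$. This is the delicate point: we only know $A'\cap\cM(A)=\bC$, not the von Neumann version. I would avoid needing it by slicing instead. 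For $\omega$ a state on $A$, apply the slice $\id\otimes\id\otimes\omega$ in the strict sense. The slice $(\id\otimes\id\otimes\omega)(x)$ lies in $\cB(\cH_\pi)\otimes\cM(\cZ)$ and commutes with $\gamma_\pi(\cZ)$, because $x$ commutes with $\gamma_\pi(z)\otimes1$. Pass into $\cB(\cH_\pi)\barotimes M$ and apply the first bulleted property: each such slice is a scalar. Since this holds for all $\omega$, $x\in1\otimes1\otimes\cM(A)$, and commuting with $A$ then forces $x\in\bC$.

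\textbf{Non-innerness.} Suppose $\alpha_\pi=\Ad W$ with $W\in\cU\cM(\cZ\otimes A)$ and $\pi$ invertible. Then $W$ commutes with $1\otimes A$, so the same slicing argument gives $(\id\otimes\omega)(W)\in\cM(\cZ)$, and these slices intertwine $\gamma_\pi$ with $\id$. Pick $\omega$ with $(\id\otimes\omega)(W)\neq0$. By the first bulleted property, $M$ being a factor, and polar decomposition, this nonzero intertwiner is a scalar multiple of a unitary in $M$ implementing $\bar\gamma_\pi$, contradicting the second bulleted property. Transporting along $\cZ\otimes A\cong A$ gives the outer $\Gamma$-action.

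I expect the main obstacle to be making the slice-map step rigorous at the multiplier level, which is why I route it through the von Neumann completion where the two bulleted properties were established.
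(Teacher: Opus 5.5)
Your proposal is correct and follows the paper's proof: you apply \autoref{thm_IU_JiangSu} to the opposite quantum group via \autoref{rem_tensor_faith_op} to get an outer left $\Gamma$-action $\gamma$ on $\cZ$, then take $\gamma\otimes\id_A$ on $\cZ\otimes A\cong A$. The paper leaves the outerness of $\gamma\otimes\id_A$ implicit, and your slice-map verification fills that step in correctly; the detour through $M$ is not needed, because the slices already lie in $\cZ$, so the C*-level outerness of $\gamma$ suffices.
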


\begin{proof}
	We only have to construct an outer left $\Gamma$-action $\gamma$ on  algebra $\cZ$ by considering $\gamma\otimes \id_A$. 
	By \autoref{rem_tensor_faith_op}, we may apply \autoref{thm_IU_JiangSu} to $\Gamma^{\op}$ instead of $\Gamma$, which yields an outer left action on $\cZ$ of $(\Gamma^{\op})^{\op} = \Gamma$. 
\end{proof}

\begin{cor}\label{cor_IU_JiangSu}
	Let $G$ be a compact quantum group whose Pontryagin dual admits a tensorially faithful finite dimensional unitary representation, or one of the following: 
	A finite quantum group, 
	a compact Lie group, 
	$\rU^+(n)$ with $n\in\bZ_{\geq 2}$, 
	$\SO_q(3)$ with $q\in(0,1)$ satisfying $(q+q^{-1})^2\in \bZ_{\geq 4}$, 
	or $\SU_q(2)$ with $q\in(-1,1)\setminus\{0\}$ satisfying $q+q^{-1}\in 2\bZ_{\geq 1} \cup -\bZ_{\geq 2}$. 
	Then, any C*-algebra $A$ satisfying $A'\cap\cM(A)\cong\bC$ and $\cZ\otimes A \cong A$ admits an outer $\Rep G$-action on $A$. 
\end{cor}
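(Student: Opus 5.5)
The plan is to reduce everything to \autoref{thm_IU_JiangSu_op} together with the passage from outer actions of the discrete quantum group $\Gamma=\wc{G}$ to outer actions of $\Rep G$. Concretely, an outer left $\Gamma$-action on $A$ yields a $\Rep G$-action on $A$ by sending $\pi\in\Rep G$ to the $A$-$A$ correspondence (or endomorphism into $\cB(\cH_\pi)\otimes A$) given by $(\Pi_\pi\otimes\id_A)\gamma$. Outerness of $\gamma$ amounts to irreducibility of $(\Pi_\pi\otimes\id)\gamma(A)$ in $\cB(\cH_\pi)\otimes\cM(A)$ for irreducible $\pi$, and to non-innerness for nontrivial invertible $\pi$. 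This translates into outerness of the resulting $\Rep G$-action: each nontrivial irreducible object is sent to a bimodule with no nonzero $A$-central vectors. I would invoke the compatibility remarks on actions of quantum groups versus unitary tensor categories recalled in the appendix (\autoref{sec_prelim_action}) for this translation, rather than reprove it.

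So the main work is to check that each listed $G$ admits a tensorially faithful $\theta\in\Rep\Gamma$.
\begin{itemize}
\item If $G$ is finite, use \autoref{eg_tensor_faith_finite}.
\item If $G$ is a compact Lie group, use \autoref{eg_tensor_faith_Lie}.
\item If $G=\rU^+(n)$, use \autoref{eg_tensor_faith_Un+}.
\item For $\SO_q(3)$ with $(q+q^{-1})^2=k\in\bZ_{\geq 4}$, the relevant facts are the isomorphism $\Rep S^+_k\simeq\Rep\SO_q(3)$ and \autoref{eg_tensor_faith_Sn+} for $S_k^+$.
\item For $\SU_q(2)$: if $q+q^{-1}=-n$ with $n\geq 2$, use $\rO^+(n)$ via \autoref{eg_tensor_faith_On+}. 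If $q+q^{-1}=2n$, use $\rO^+\bigl(\begin{smallmatrix}0&-1\\1&0\end{smallmatrix}\bigr)$ of size $2n$ via \autoref{eg_tensor_faith_OJn+}. The sign and parametrization should match the formulas $q=\frac{\sqrt{n^2-4}-n}{2}$ and $q=n-\sqrt{n^2-1}$ recalled at the end of \autoref{ssec_freeQG}.
\end{itemize}

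The subtle point is that for $\SO_q(3)$ and $\SU_q(2)$ we only have a unitary monoidal equivalence $\Rep G\simeq\Rep G'$ with $G'$ a free quantum group, not an isomorphism $G\cong G'$. Since the conclusion concerns $\Rep G$-actions, I would transport the outer $\Rep G'$-action along the equivalence. Precomposing a $\Rep G'$-action with a unitary monoidal equivalence $\Rep G\to\Rep G'$ gives a $\Rep G$-action, and outerness is preserved because it is a condition on irreducible objects, which are preserved by equivalences. Thus the corollary reduces to the $\Rep G'$ case, and hence to \autoref{thm_IU_JiangSu_op}.

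I expect the main obstacle to be verifying that the translation from outer $\Gamma$-actions to outer $\Rep G$-actions is valid for a general $A$ with $A'\cap\cM(A)\cong\bC$ (possibly non-unital), including the identification of the bimodule for $\pi$ and the outerness condition. I would try to rely directly on the appendix statement. Failing that, I would check outerness by hand: for irreducible $\pi\neq\mathbbm{1}$, show there is no nonzero $A$-central vector in $\cH_\pi\otimes A$ with the twisted left action, using both parts of the outerness definition (irreducibility of $\gamma_\pi$ for all irreducible $\pi$, and non-innerness for invertible $\pi$). A secondary bookkeeping point is matching signs and conventions ($\Gamma$ versus $\Gamma^{\op}$, and $q$ versus $-q$). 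The former is handled by \autoref{rem_tensor_faith_op}. The latter does not affect $\SO_q(3)$ since $\SO_q(3)\cong\SO_{-q}(3)$, but for $\SU_q(2)$ the sign must be checked against the cited equivalences.
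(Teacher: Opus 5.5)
Your proposal is correct and matches the paper's proof. The paper likewise obtains the tensorially faithful case from \autoref{thm_IU_JiangSu} (in the form of \autoref{thm_IU_JiangSu_op}) together with \autoref{rem_outer_tensorcat}, and handles the remaining groups through the examples of \autoref{sec_separateDQG} and the unitary monoidal equivalences listed at the end of \autoref{ssec_freeQG}; your parameter matching ($(q+q^{-1})^2=k$ for $S_k^+$, $q+q^{-1}=-n$ for $\rO^+(n)$, and $q+q^{-1}=2n$ for the $2n\times 2n$ symplectic-type $\rO^+$) is also right.
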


Note that \autoref{main_JiangSu} follows from this because of the unitary monoidal equivalences as at the end of \autoref{ssec_freeQG}. 
By \autoref{rem_tensor_faith_op}, the first condition in the statement does not depend on the convention of the Pontryagin dual discussed in \autoref{ssec_prelim_CQGDQG}. 

\begin{proof}
	When there is some tensorially faithful $\theta\in\Rep\Gamma$, 
	this is a direct consequence of \autoref{thm_IU_JiangSu} in view of \autoref{rem_outer_tensorcat}. 
	Thus, the remaining cases follow from examples in \autoref{sec_separateDQG} with the aid of the unitary monoidal equivalences. 
\end{proof}

\section{Variants and applications}\label{sec_applications}

\subsection{RFD quantum groups}\label{ssec_subfactor}

Similarly to \autoref{thm_IU_JiangSu}, we can show the following. 

\begin{thm}\label{thm_RFD_UHF}
	Let $G$ be a compact quantum group with $L^2(G)$ separable such that $\Gamma=\wc{G}$ is RFD. Take any $2\leq d\in\bZ$. Then, there is a trace-preserving outer left $\Gamma^{\op}$-action on the UHF algebra $\bM_{d^\infty}$ that extends to a trace-preserving outer left $\Gamma^{\op}$-action on the von Neumann completion $M$ of $\bM_{d^\infty}$ with respect to the unique faithful tracial state on $\bM_{d^\infty}$. 
	Also, there is a trace-preserving outer left $\Gamma$-action on the UHF algebra $\bM_{d^\infty}$ that extends to a trace-preserving outer left $\Gamma$-action on $M$. 
\end{thm}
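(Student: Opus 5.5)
Since $\Gamma$ is RFD and $L^2(G)$ is separable, $\Irr G$ is countable and the representations in $\Rep\Gamma$ jointly separate $\cO(G)$. I would enumerate $\Irr G=\{\pi_1,\pi_2,\dots\}$ and use finite dimensionality of each $C_{\pi}$ to pick, for each $k$, some $\theta_k\in\Rep\Gamma$ with $\mathbbm{1}\leq\theta_k$ such that $\Pi_{\theta_k}$ is injective on $C_{\mathbbm{1}\oplus\pi_1\oplus\cdots\oplus\pi_k}$. Replacing $\theta_k$ by $\theta_1\oplus\cdots\oplus\theta_k$ makes the sequence increasing. The trick to land in $\bM_{d^\infty}$ is dimension control. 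I would set $\eta_k:=\theta_k\oplus\mathbbm{1}^{\oplus m_k}$ with $\dim_\bC\cV_{\eta_k}=d^{e_k}$ for suitable $m_k$, and form $M_{[1,n]}:=\bigotimes_{k=1}^n\cB(\cV_{\eta_k^{\otimes N_k}})$ with the tensor product action $\Ad V_{\eta_{[1,n]}}$. Here the $N_k$ are large integers to be chosen, and the inclusions are $x\mapsto x\otimes 1$. This is a $\Gamma^{\op}$-equivariant unital inductive system of matrix algebras of sizes powers of $d$, so the limit is $\bM_{d^\infty}$ with a trace-preserving action. Trace preservation follows from biunitarity, using \autoref{rem_RFD_Kac} exactly as in Step 3 of the proof of \autoref{thm_IU_JiangSu}, and \autoref{lem_tr_pres_action} extends the action to $M=R$.

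For outerness I would follow Step 4 verbatim but with the simpler conditional expectations $E_{n,\infty}=\id\otimes\tr$ on the tail factors. For invertible $\pi=\pi_j\neq\mathbbm{1}$, the non-innerness bound needs $|\tr(V_{\pi,\eta_k^{\otimes N_k}})|\leq c'<1$ for infinitely many $k$ (all $k\geq j$). This holds since $|\tr(V_{\pi,\eta_k})|<1$ by injectivity on $C_{\mathbbm{1}\oplus\pi}$, and powers only decrease it. For \autoref{clm_thm_IU_JiangSu_irr} with $\pi=\pi_j$ and $k\geq j$, I would choose $N_k$ as follows. For each of the finitely many $\pi_i$ with $i\leq k$, \autoref{cor_quantum_channel_irred} applies to $V_{\pi_i,\eta_k}$ because $\Pi_{\eta_k^{\otimes n}}$ contains $\Pi_{\eta_k}$ (using $\mathbbm{1}\leq\eta_k$) and is therefore injective on $C_{\pi_i}$. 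So $\|\Theta_{V_{\pi_i,\eta_k}}^{N}-\wt{\tr}\|\to0$, and I pick $N_k$ with $\|\Theta_{V_{\pi_i,\eta_k^{\otimes N_k}}}-\wt{\tr}_{\cH_{\pi_i}}\|\leq\frac12$ for all $i\leq k$, using \autoref{lem_quantum_channel_irred}\ref{item_lem_quantum_channel_irred_1}. The contraction argument with $y_n$ then shows the relative commutant $\bar\gamma_\pi(M)'\cap\cB(\cH_\pi)\otimes M$ is trivial, and in fact more simply so, since there is no interval $[0,1]$ and no measure $\mu_n$. Combined with \autoref{rem_outer_tensorcat}, this gives outerness.

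For the $\Gamma$-action statement, I would note that RFD for $\Gamma$ is equivalent to RFD for $\Gamma^{\op}$, as in \autoref{rem_tensor_faith_op}: a representation $\Pi_\theta$ of $\cO(G)$ is also one of $\cO(G^{\op})=\cO(G)$. So I would apply the first part to $G^{\op}$. The main obstacle I expect is the uniform choice of $N_k$ and the handling of countably many $\pi$ at once. Each $\pi$ only needs the tail $k\geq j$, and the estimates telescope over the tail, so it should work. I also need to check that padding by $\mathbbm{1}$ to reach the dimension $d^{e_k}$ preserves the injectivity hypotheses; it does, since adding trivial summands keeps $\Pi_{\theta_k}$ as a subrepresentation.
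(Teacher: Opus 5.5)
The non-invertible part of your argument is fine, and the overall architecture matches the paper's proof: padding by trivial summands to reach powers of $d$, the tensor-product inductive system with $x\mapsto x\otimes 1$, trace preservation from biunitarity, the $\Theta$-contraction via \autoref{cor_quantum_channel_irred}, and passing to $G^{\op}$ for the $\Gamma$-statement. The gap is in the invertible case. You assert a uniform $c'<1$ with $|\tr(V_{\pi_j,\eta_k^{\otimes N_k}})|\leq c'$ for all $k\geq j$. What you actually know is only $t_k:=|\tr(V_{\pi_j,\eta_k})|<1$ for each $k$ separately, and $|\tr(V_{\pi_j,\eta_k^{\otimes N_k}})|=t_k^{N_k}\leq t_k$ gives no uniformity in $k$. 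Indeed, $t_k$ is a normalized trace over ever larger representations, further diluted by the $m_k$ trivial summands, and nothing in your choice of $\theta_k$ prevents $t_k\to1$.

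Your choice of $N_k$ does not rescue this. For invertible $\pi$ one has $\cB(\cH_\pi)=\bC$ and $\Theta_{V_{\pi,\eta}}=\id=\wt{\tr}_{\cH_\pi}$, so the requirement $\|\Theta-\wt{\tr}_{\cH_\pi}\|\leq\frac12$ imposes nothing. The estimate on $f_n$ from \autoref{clm_thm_IU_JiangSu_inv} needs $\prod_{k>n}t_k^{N_k}=0$, and this can genuinely fail. Take $G=\mathbb{T}$, so $\Gamma=\bZ$, every irreducible is invertible, and $N_k=1$ is permitted. Choose each $\theta_k$ with eigenvalue $1$ of multiplicity about $2^k$ plus $k$ generic eigenvalues. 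All your injectivity requirements hold, yet $\sum_k(1-t_k)<\infty$. Then the product automorphism $\bigotimes_k\Ad V_{\pi,\eta_k}$ of $M$ is inner: rotate each tensor factor by a phase making its trace positive, and the partial products are $\|\cdot\|_2$-Cauchy and converge to an implementing unitary. So the construction as specified can produce a non-outer action.

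The repair is cheap. When choosing $N_k$, additionally require $|\tr(V_{\pi_i,\eta_k})|^{N_k}\leq\frac12$ for the finitely many invertible $\pi_i\neq\mathbbm{1}$ with $i\leq k$. Each base is $<1$ by your injectivity on $C_{\mathbbm{1}\oplus\pi_i}$, so this is possible. Then $c'=\frac12$ works and the rest of your argument goes through, with explicit geometric rates.

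The paper avoids this bookkeeping altogether with the blocks $\vartheta_n=\phi_1^{\otimes n}\otimes\cdots\otimes\phi_n^{\otimes n}$, in which each fixed $\phi_r$ recurs in every later block with growing multiplicity. For invertible $\pi$, the automorphism $\bar{\gamma}_\pi$ then contains infinitely many copies of one fixed non-scalar $\Ad V_{\pi,\phi_r}$, which makes it outer by the standard result on infinite tensor products of automorphisms. For non-invertible $\pi$, the factor $\Theta^m_{V_{\pi,\phi_r}}-\wt{\tr}_{\cH_\pi}$ appears and tends to $0$ by \autoref{cor_quantum_channel_irred}\ref{item_cor_quantum_channel_irred_4}. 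This needs neither an enumeration of $\Irr G$ nor adaptively chosen exponents.
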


It is tempting to expect that \autoref{thm_IU_JiangSu} holds for any RFD discrete quantum group with $\ell^2(\Gamma)$ separable, but the author was unable to show this. 

\begin{proof}
	As in \autoref{cor_IU_JiangSu}, it suffices to show the former statement. 
	We have the well-defined left $\Gamma^{\op}$-action $\gamma_{\varphi}:=\Ad V_{\varphi}(1_{\ell^\infty(\Gamma)}\otimes (-))\colon \cB(\cV_\varphi) \to \cM(c_0(\Gamma)\otimes \cB(\cV_\varphi))$ for each $\varphi\in\Rep\Gamma$. We write $\gamma_{\pi,\varphi}:=\Ad V_{\pi,\varphi}(1_{\cB(\cH_\pi)}\otimes (-)) = (\Pi_\pi\otimes\id_{\cB(\cV_\varphi)})\gamma_{\varphi}$. 
	
	By assumption, we can take $\theta_n\in \Rep \Gamma$ for $n\in\bZ_{\geq 1}$ such that $\prod_{n=1}^{\infty}\Pi_{\theta_n}\colon \cO(G)\to \prod_{n=1}^{\infty}\cB(\cV_{\theta_n})$ is injective. 
	We take $p_n\in\bZ_{\geq 1}$ such that $p_n+\sum_{k=1}^{n}\dim_\bC\cV_{\theta_k} \in \{ d^n \mid n\in\bZ_{\geq 1} \}$ for $n\in\bZ_{\geq 1}$ and define the objects in $\Rep\Gamma$ by 
	\begin{align*}
		\phi_n :={}& \mathbbm{1}^{\oplus p_n}\oplus \bigoplus_{k=1}^{n}\theta_k ,
		\qquad
		\vartheta_{n} :=\phi_1^{\otimes n}\otimes \phi_{2}^{\otimes n}\otimes\cdots\otimes \phi_n^{\otimes n}, 
		\\
		\vartheta_{[n,m]} :={}& \vartheta_{n}\otimes\vartheta_{n+1}\otimes\cdots\otimes\vartheta_{m}, 
		\qquad\text{and}\qquad 
		\vartheta_{\varnothing}:=\mathbbm{1} 
	\end{align*}
	for $m,n\in\bZ_{\geq 1}$ with $n\leq m$. 
	Note that $\dim_\bC\cV_{\vartheta_{[n,m]}}\geq d$ is a power of $d$ for $1\leq n\leq m$ and that $\bigoplus_{k=1}^{n}\theta_k$ is a subobject of $\vartheta_{n}=\vartheta_{[n,n]}$ and thus of $\vartheta_{[1,n]}$ by $p_k\geq 1$ for all $k\in\bZ_{\geq 1}$. 
	We put $\iota_{n,m}\colon \cB(\cV_{\vartheta_{[1,n]}})\ni x\mapsto x\otimes1 \in \cB(\cV_{\vartheta_{[1,n]}}\otimes\cV_{\vartheta_{[n+1,m]}}) = \cB(\cV_{\vartheta_{[1,m]}})$ for $1\leq n\leq m$. 
	For $n\in\bZ_{\geq 1}$, we see that $(\cB(\cV_{\vartheta_{[1,n]}}),\gamma_{\vartheta_{[1,n]}})$ is a well-defined left $\Gamma^{\op}$-C*-algebra and $\iota_{n,m}$ is a unital injective $\Gamma^{\op}$-$*$-homomorphism. 
	We put $A := \varinjlim_{n\to\infty}( \cB(\cV_{\vartheta_{[1,n]}}) , \iota_{n,n+1}) \cong \bM_{d^{\infty}}$ equipped with the left $\Gamma^{\op}$-action $\gamma$ induced by the inductive limit and let $\gamma_{\pi} := (\Pi_\pi\otimes\id_{\bM_{d^{\infty}}})\gamma$. We write $\iota_{n,\infty}\colon \cB(\cV_{\vartheta_{[1,n]}}) \to A$ for the unital injective $*$-homomorphism induced by the inductive limit. 
	
	We write $\tr_A$ for the unique faithful tracial state on $A$ and let $M:=A''\subset \cB(L^2(A,\tr_A))$, which is a hyperfinite $\mathrm{II}_1$ factor with separable predual. 
	Using the biunitarity of $V_{\pi,\vartheta_{[1,n]}}$ and \autoref{lem_tr_pres_action}, we observe that $\gamma_{\pi}$ extends to a unital normal $*$-homomorphism $\bar{\gamma}_\pi\colon M\to \cB(\cH_\pi)\barotimes M$ for each $\pi\in\Rep G$ as in the proof of \autoref{thm_IU_JiangSu}. Thus, we obtain the left $\Gamma^{\op}$-action $\bar{\gamma}$ on the von Neumann completion $M$ of $A$ extending $\gamma$, and $\bar{\gamma}$ is trace-preserving. 
	Also, for $n\in\bZ_{\geq 1}$, we write $\iota_{n,\infty}\colon \cB(\cV_{\vartheta_{[1,n]}})\to A\subset M$ for the unital $\Gamma^{\op}$-$*$-homomorphism induced by the inductive limit. 
	We often regard $\cB(\cV_{\vartheta_{[1,n]}})\subset M$ via $\iota_{n,\infty}$. 
	
	For each $\pi\in\Rep G$, we put $C_\pi := \{ (\omega\otimes\id_{C(G)})(V_\pi) \mid \omega\in\cB(\cH_\pi)_* \}\subset\cO(G)$. 
	Next, we show that for any invertible $\pi\in\Irr G\setminus\{\mathbbm{1}\}$, by regarding $\cB(\cH_\pi)\cong\bC$, the normal $*$-automorphism $\bar{\gamma}_\pi\colon M\to \cB(\cH_\pi)\barotimes M \cong M$ is not inner. By our choice of $\theta_n$, we can take $r\in\bZ_{\geq 1}$ such that $\bigoplus_{k=1}^{r}\Pi_{\theta_k}\colon C_{\mathbbm{1}\oplus\pi} \subset \cO(G) \to \cB(\bigoplus_{k=1}^{r}\cV_{\theta_k})$ and thus $\Pi_{\phi_r}\colon \bC1_{\cO(G)}\oplus C_{\pi} = C_{\mathbbm{1}\oplus\pi}\to \cB(\cV_{\phi_r})$ are injective. 
	Since $V_{\mathbbm{1},\phi_r}=1\in \cZ(\cB(\cH_{\mathbbm{1}}\otimes\cV_{\phi_r}))\cong \cZ(\cB(\cV_{\phi_r}))$, this means that $V_{\pi,\phi_r}\in \cB(\cH_\pi\otimes\cV_{\phi_r})$ is not contained in $\bC1= \cZ(\cB(\cV_{\phi_r}))\cong \cZ(\cB(\cH_\pi\otimes\cV_{\phi_r}))$. 
	Note that $\bar{\gamma}_\pi \in \Aut(M)$ is the normal extension of the limit of $\Ad V_{\pi,\vartheta_{[1,n]}}=\Ad \bigotimes_{k=1}^{n} \bigotimes_{l=1}^{k} V_{\pi,\phi_{l}}^{\otimes k} \in \Aut(\cB(\cV_{\vartheta_{[1,n]}}))$ via $\cB(\cH_\pi)\cong \bC$. The $*$-automorphism $\bar{\gamma}_\pi$ is the tensor product of countably infinitely many copies of $\Ad V_{\pi,\phi_r}$ and some $*$-automorphism, and it is well-known that such a $*$-automorphism is outer (see e.g., \cite[Proposition 4.1]{Kerr-Li-Pichot2010turbulence}). 
	Hence, to see that the left $\Gamma^{\op}$-actions on $A$ and $M$ are outer, it suffices to see that for each $\pi\in\Irr G$, we have $\bar{\gamma}_\pi(M)'\cap \cB(\cH_\pi)\otimes M\cong\bC$. We can show this claim similarly to \autoref{clm_thm_IU_JiangSu_irr} as follows. 
	
	By our choice of $\theta_n$, we can take $r\in \bZ_{\geq 1}$ such that $\bigoplus_{k=1}^{r}\Pi_{\theta_{k}}\colon C_\pi\subset \cO(G)\to \cB(\bigoplus_{k=1}^{r}\cV_{\theta_k})$ is injective. Since $\mathbbm{1}\oplus\bigoplus_{k=1}^{r}\theta_k\leq \phi_{r}$, we see that $\phi_r^{\otimes n}\colon C_\pi\to \cB(\cV_{\phi_{r}^{\otimes n}})$ is injective for all $n\in\bZ_{\geq 1}$. Thus, the unitary $V_{\pi,\phi_r}\in\cU(\cH_\pi\otimes\cV_{\phi_r})$ satisfies the assumption of \autoref{cor_quantum_channel_irred}. 
	
	For $m,n\in\bZ_{\geq 1}$ with $n\leq m$, we let 
	$E_{n,m} := \id_{\cB(\cV_{\vartheta_{[1,n]}})}\otimes \tr_{\cV_{\vartheta_{[n+1,m]}}} \colon \cB(\cV_{\vartheta_{[1,m]}})\to \cB(\cV_{\vartheta_{[1,n]}})$, which is a faithful conditional expectation. 
	Also, there is a normal conditional expectation $E_{n,\infty}\colon M\to \cB(\cV_{\vartheta_{[1,n]}})$ such that $E_{n,\infty}|_{\cB(\cV_{\vartheta_{[1,m]}})}=E_{n,m}$ for all $m,n\in\bZ_{\geq 1}$ with $m\leq n$. Moreover, $E_{n,\infty}(x)$ converges to $x$ as $n\to\infty$ in the weak topology with respect to $L^2(A,\tr_A)$ for all $x\in M$.

	Now, take any $x\in \bar{\gamma}_\pi(M)'\cap \cB(\cH_\pi)\barotimes M$ and $r\leq n\in\bZ_{\geq 1}$. 
	Then, we have 
	\begin{align*}
		(\id_{\cB(\cH_\pi)}\otimes E_{n,\infty})(x) 
		\in{}& \bar{\gamma}_{\pi,\vartheta_{[1,n]}}(\cB(\cV_{\vartheta_{[1,n]}}))'\cap ( \cB(\cH_\pi)\otimes \cB(\cV_{\vartheta_{[1,n]}}) ) 
		\\={}& 
		\Ad V_{\pi,\vartheta_{[1,n]}} (\cB(\cH_\pi)\otimes \id_{\cV_{\vartheta_{[1,n]}}}) . 
	\end{align*}
	We let $y_n := (\id\otimes\tr_{\cV_{\vartheta_{[1,n]}}}) \Ad V_{\pi,\vartheta_{[1,n]}}^* (\id\otimes E_{n,\infty})(x) \in \cB(\cH_\pi)$ so that $\Ad V_{\pi,\vartheta_{[1,n]}} (y_n\otimes\id_{\cV_{\vartheta_{[1,n]}}}) = (\id_{\cB(\cH_\pi)}\otimes E_{n,\infty})(x)$. 
	For $m\in\bZ_{\geq 1}$ with $n+1\leq m$, we have $y_{n} = \Theta_{V_{\pi,\vartheta_{[n+1,m]}}}(y_{m})$ because 
	\begin{align*}
		&
		\Ad V_{\pi,\vartheta_{[1,n]}} (y_n\otimes\id_{\cV_{\vartheta_{[1,n]}}}) 
		= 
		(\id_{\cB(\cH_\pi)}\otimes E_{n,m} E_{m,\infty})(x) 
		\\={}& 
		(\id_{\cB(\cH_\pi)}\otimes E_{n,m}) \Ad V_{\pi,\vartheta_{[1,m]}} (y_m\otimes\id_{\cV_{\vartheta_{[1,m]}}}) 
		\\={}& 
		(\id_{\cB(\cH_\pi)\otimes\cB(\cV_{\vartheta_{[1,n]}})}\otimes \tr_{\cV_{\vartheta_{[n+1,m]}}}) \Ad\bigl( (V_{\pi,\vartheta_{[1,n]}})_{12} (V_{\pi,\vartheta_{[n+1,m]}})_{13} \bigr) (y_m\otimes\id_{\cV_{\vartheta_{[1,m]}}}) 
		\\={}& 
		\Ad V_{\pi,\vartheta_{[1,n]}} (\Theta_{V_{\pi,\vartheta_{[n+1,m]}}}(y_m)\otimes \id_{\cV_{\vartheta_{[1,n]}}} ) . 
	\end{align*}
	Using the notation from \autoref{lem_quantum_channel_irred}, we see that 
	\begin{align*}
		&
		\tr_{\cH_\pi}(y_n) = \tr_{\cH_\pi\otimes\cV_{\vartheta_{[1,n]}}} (\id\otimes E_{n,\infty}) (x) = (\tr_{\cH_\pi}\otimes\tr_M)(x). 
	\end{align*}
	It follows that for $m\in\bZ_{\geq 1}$ with $n+1\leq m$, 
	\begin{align*}
		&
		\| (\id_{\cB(\cH_\pi)}\otimes E_{n,\infty})(x) - (\tr_{\cH_\pi}\otimes \tr_M)(x)\id_{\cH_\pi\otimes\cV_{\vartheta_{[1,n]}}} \| 
		\\={}& 
		\| \Ad V_{\pi,\vartheta_{[1,n]}} \| \| y_n - \tr_{\cH_\pi}(y_n)\id_{\cH_\pi} \| 
		\\={}& 
		\| \Theta_{V_{\pi,\vartheta_{[n+1,m]}}} ( y_{m} - \wt{\tr}_{\cH_\pi}(y_{m}) ) \| 
		\leq 
		\| \Theta_{V_{\pi,\vartheta_{m}}} ( y_{m} - \wt{\tr}_{\cH_\pi}(y_{m}) ) \| 
		\\={}&
		\| \Theta_{V_{\pi,\phi_{1}}}^{m} \cdots \Theta_{V_{\pi,\phi_{r-1}}}^{m} (\Theta_{V_{\pi,\phi_r}}^{m} - \wt{\tr}_{\cH_\pi}) \Theta_{V_{\pi,\phi_{r+1}}}^{m} \cdots \Theta_{V_{\pi,\phi_{m}}}^{m} ( y_{m} ) \| 
		\\\leq{}&
		\| \Theta_{V_{\pi,\phi_r}}^{m} - \wt{\tr}_{\cH_\pi} \| \| x \| 
		\xto{n+1\leq m\to\infty} 0. 
	\end{align*}
	where the last convergence follows from \autoref{cor_quantum_channel_irred} \ref{item_cor_quantum_channel_irred_4} applied to $V_{\pi,\phi_r}$. Thus, $E_{n,\infty}(x)=\tr_M(x)1$ whenever $n\geq r$. 
	Since $E_{n,\infty}(x)$ converges to $x$ weakly as $n\to\infty$, it follows that $x=\tr_M(x)1$. 
	This shows the claim and thus completes the proof of \autoref{thm_RFD_UHF}. 
\end{proof}

\begin{cor}\label{cor_RFD_UHF}
	Let $4\leq d\in \bZ$. Then, there exists an irreducible hyperfinite subfactor $N\subset M$ of the hyperfinite $\mathrm{II}_1$ factor $M$ with separable predual such that it has index $d$ and the trivial standard invariant. 
\end{cor}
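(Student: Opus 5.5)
Fix $4\leq d\in\bZ$. The plan is to realize the subfactor as $M^{\bar\gamma}\subset M$, or dually as a Q-system inclusion, coming from an outer action on the hyperfinite $\mathrm{II}_1$ factor $M$ of a discrete quantum group whose representation category contains an object of dimension $\sqrt d$ generating a Temperley--Lieb--Jones category. The candidate is $G=S_d^+$. By \cite[Theorem 4.7]{DeRijdt-VanderVennet2010actions} and \cite[Theorem 1.1]{Soltan2010quantumSO(3)}, $\Rep S_d^+\simeq\Rep\SO_q(3)$ with $q+q^{-1}=\sqrt d$. Let $\pi\in\Rep S_d^+$ be the fundamental $d$-dimensional representation, so $\pi\cong\mathbbm{1}\oplus\pi_1$ with $\pi_1$ corresponding to the spin-one object. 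The algebra object $\pi_{\frac12}\otimes\overline{\pi_{\frac12}}\cong\mathbbm{1}\oplus\pi_1$ is exactly the Q-system $C(\{1,\dots,d\})$ with the coaction of $S_d^+$. It has dimension $d$, and its standard invariant is Temperley--Lieb--Jones.

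First, by \autoref{eg_tensor_faith_Sn+}, $\wc{S_d^+}$ admits a tensorially faithful representation. Hence it is RFD by \autoref{rem_RFD_Kac}, and $L^2(S_d^+)$ is separable. Next I apply \autoref{thm_RFD_UHF}, or \autoref{thm_IU_JiangSu} and pass to the tracial von Neumann completion, which is the hyperfinite $\mathrm{II}_1$ factor. This gives a trace-preserving outer action $\bar\gamma$ of $\Gamma=\wc{S_d^+}$ on $M\cong\cR$. By \autoref{rem_outer_tensorcat} this yields an outer, i.e.\ fully faithful, action of $\Rep S_d^+$ on $M$ by finite-index bimodules/endomorphisms. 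Transporting the Q-system $C(\{1,\dots,d\})\in\Rep S_d^+$ along this fully faithful functor gives a Q-system in $\mathrm{End}(M)$, and hence a finite-index inclusion $N\subset M$ of index $\dim(\mathbbm{1}\oplus\pi_1)=d$. Concretely, one can take $N=\{x\in M\mid \bar\gamma_\pi(x)\in \text{range of the coaction}\}$, but the Q-system description is cleaner. Since $N$ is a subfactor of index $d<\infty$ in $\cR$, $N$ is itself hyperfinite.

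Irreducibility and the standard invariant come from full faithfulness. The relative commutant $N'\cap M$ is identified with $\Hom(\mathbbm{1},\mathbbm{1}\oplus\pi_1)$ modulo the Q-system structure, i.e.\ with $\Hom_{\Rep S_d^+}(\mathbbm{1},\mathbbm{1}\oplus\pi_1)\cong\bC$, so $N\subset M$ is irreducible. More generally, the higher relative commutants are $\Hom_{\Rep S_d^+}(\mathbbm{1}, (\mathbbm{1}\oplus\pi_1)^{\otimes k})$ with the Q-system structure. Under the equivalence with $\Rep\SO_q(3)\subset\Rep\SU_q(2)$, these are the Temperley--Lieb algebras $\mathrm{End}(\pi_{\frac12}^{\otimes k})$. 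Hence the standard invariant is the Temperley--Lieb--Jones one, and this part uses only Tannaka--Krein/planar-algebra facts and no classification.

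The main obstacle is the bookkeeping between an outer action of the discrete quantum group and a fully faithful categorical action on $M$ whose Q-system gives the subfactor. In particular one must check that outerness in the sense used here, namely irreducibility of $\bar\gamma_\pi$ for every $\pi\in\Irr G$ and non-innerness on invertibles, implies full faithfulness. I would use \autoref{rem_outer_tensorcat} for this. One must also handle the $\Gamma$ versus $\Gamma^{\op}$ issue, which \autoref{thm_RFD_UHF} already does by giving both.
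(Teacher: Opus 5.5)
Your proposal is correct and follows the paper's argument. The paper's proof simply observes that \autoref{eg_tensor_faith_Sn+} and \autoref{thm_RFD_UHF} give an outer action of $\wc{S_d^+}$ on $\cR$; the index-$d$ subfactor with trivial standard invariant then comes, via the resulting fully faithful $\Rep S_d^+$-action, from the Temperley--Lieb--Jones Q-system $\pi_{\frac12}\otimes\overline{\pi_{\frac12}}$ (equivalently $C(\{1,\dots,d\})$), and this is exactly the bookkeeping you spell out. You should drop the ``concrete'' description of $N$ via the range of the coaction: it is not meaningful as written, and the Q-system construction makes it unnecessary.
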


Note that \autoref{main_subfactor} \ref{item_main_subfactor_1} follows from this. 

\begin{proof}
	This is a consequence of the existence of an outer action of $\wc{{S_d^+}}$ on the hyperfinite $\mathrm{II}_1$ factor with separable predual $\cR$ by \autoref{thm_RFD_UHF} and \autoref{eg_tensor_faith_Sn+}. 
\end{proof}

When $d\geq 4$ is a square of some integer, \autoref{cor_RFD_UHF} is also a consequence of the existence of an outer action of $\rO^+(\sqrt{d})$ on $\cR$ by \autoref{thm_RFD_UHF} and \autoref{eg_tensor_faith_On+}. 
Note that, as a consequence of a discussion in \autoref{eg_tensor_faith_Un+}, this construction does not rely on the classification of the standard invariant. 

\subsection{Minimal actions on injective factors}\label{ssec_minimal}

\begin{lem}\label{lem_fixed_nuclear}
	Let $G$ be a compact quantum group with $C(G)$ nuclear and $(A,\alpha)$ be a left $G$-C*-algebra. Then, $A$ is nuclear if and only if $A^\alpha := \{ a\in A \mid \alpha(a)=1_{C(G)}\otimes a \}$ is nuclear. 
\end{lem}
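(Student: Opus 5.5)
The plan is to relate $A$ to its fixed point algebra through the crossed product by the coaction $\alpha$. The equivalence rests on three standard facts about actions of compact quantum groups.

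First, since $h_G$ is a state, the map $E:=(h_G\otimes\id_A)\alpha\colon A\to A^\alpha$ is a conditional expectation onto $A^\alpha$. Invariance of the Haar state gives
\[
\alpha E = (h_G\otimes\id)(\Delta_G\otimes\id)\alpha = 1\otimes E,
\]
and $E$ is a completely positive contraction fixing $A^\alpha$ pointwise. So if $A$ is nuclear, then $A^\alpha$ is the range of a conditional expectation from a nuclear C*-algebra. Hence $A^\alpha$ is nuclear; this needs only that a unital ccp retraction preserves nuclearity, with no assumption on $G$. This gives the ``only if'' direction.

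For the converse, I would use the reduced crossed product $\Gamma\ltimes_\alpha A$, which here equals $\wc{G}\ltimes A$, the closed span of $\alpha(A)(c_0(\Gamma)\otimes 1)$ inside $\cM(\cK(L^2(G))\otimes A)$. Two facts are needed:
\begin{enumerate}
\item[(a)] For the minimal projection $p\in c_0(\Gamma)$ corresponding to $\mathbbm{1}\in\Irr G$, the corner $p(\Gamma\ltimes A)p$ is isomorphic to $A^\alpha$. This holds because $p\,\alpha(a)\,p=p\otimes E(a)$, since $p$ implements $h_G$ on $L^2(G)$.
\item[(b)] Each other isotypic block is a corner of the form $\cB(\cH_\pi)\otimes$ (spectral subspace part), and the whole crossed product is Morita equivalent to the ideal generated by $p$.
\end{enumerate}
Rather than analyze fullness of $p$, it is safer to use Takesaki--Takai duality for compact quantum groups (Baaj--Skandalis): $\wc{\Gamma}\ltimes(\Gamma\ltimes A)\cong \cK(L^2(G))\otimes A$.

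From nuclearity of $A^\alpha$ I then want nuclearity of $\Gamma\ltimes A$. The crossed product by the discrete dual decomposes as an inductive limit, over finite subsets $F\subset\Irr G$, of the corners $p_F(\Gamma\ltimes A)p_F$. Each such corner is a finite-dimensional linking algebra assembled from the spectral subspaces $A_\pi$, and it is Morita equivalent to a corner of $p_{F'}(\Gamma\ltimes A)p_{F'}$ containing the trivial block. The cleaner route is to note that $\Gamma\ltimes A$ is Morita equivalent to the fixed point algebra $(A\otimes\cK(L^2(G)))^{\alpha\otimes\Ad}$ for a saturated stabilized action. That algebra contains $A^\alpha\otimes\cK$ as a full hereditary subalgebra, because $L^2(G)$ contains every irreducible, which makes the stabilized action saturated. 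Nuclearity passes to Morita equivalent algebras, so $\Gamma\ltimes A$ is nuclear.

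Finally, $\wc{\Gamma}\ltimes B$ is nuclear whenever $B$ is nuclear and $C(G)$ is nuclear, since it sits in $\cK(L^2(G))\otimes B$ generated by $C(G)$ and $B$, and with $G$ coamenable and amenable-type it is a crossed product by an amenable coaction. Here the nuclearity hypothesis on $C(G)$ is used. Then $\cK\otimes A$, and hence $A$, is nuclear. I expect the main obstacle to be this step: showing that a crossed product by $\wc{\Gamma}=G$ preserves nuclearity when $C(G)$ is nuclear. I would try to prove it through the Baaj--Skandalis identification of the crossed product with the closed span of $(C(G)\otimes 1)\,\widehat{\beta}(B)$ and an approximation argument using ccp maps through $C(G)\otimes B$.
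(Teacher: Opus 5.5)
Your ``only if'' direction is correct and is the same as the paper's. The converse has a genuine gap at the step ``$A^\alpha$ nuclear $\Rightarrow$ $\Gamma\ltimes A$ nuclear''.

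The corner identification $p(\Gamma\ltimes A)p\cong A^\alpha$ only shows that the ideal generated by $p$ is nuclear. But $p$ is full precisely when $\alpha$ is saturated, and this fails in general. For the trivial action of a nontrivial $G$ on $\bC$, your crossed product is $c_0(\Gamma)$ and $p$ generates only $\bC p$.

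Your proposed repair does not close this gap, for three reasons.
\begin{itemize}
\item For $0\neq a\in A^\alpha$, the element $a\otimes k$ is fixed by $\alpha\otimes\Ad$ only when $k$ commutes with the regular representation. So $A^\alpha\otimes\cK(L^2(G))$ is not even contained in $(A\otimes\cK(L^2(G)))^{\alpha\otimes\Ad}$.
\item In the trivial-action example, that fixed point algebra is the commutant of the regular representation inside $\cK(L^2(G))$. This is a $c_0$-direct sum of at least two matrix algebras, so it is not simple. It therefore cannot contain the simple algebra $\bC\otimes\cK$ as a full hereditary subalgebra.
\item Saturation of the stabilized action only relates its fixed point algebra, which is $\Gamma\ltimes A$ up to isomorphism, to its crossed product, which is a stabilization of $\Gamma\ltimes A$. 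That is circular.
\end{itemize}
Nothing in the plan ties the corners $p_\pi(\Gamma\ltimes A)p_\pi$ for $\pi\neq\mathbbm{1}$, which are built from the spectral subspaces of $A$, back to $A^\alpha$.

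The last step is not established either, as you acknowledge. Crossed products by the dual $\Gamma$-action preserve nuclearity when $\Gamma$ is amenable, i.e.\ when $G$ is coamenable. The lemma, however, assumes only that $C(G)$ is nuclear.

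The idea you are missing is that no crossed products are needed. The paper follows Brown--Ozawa, Theorem 4.5.2: for an arbitrary C*-algebra $B$, it shows that the quotient map $q\colon A\motimes B\to A\otimes B$ is injective.
\begin{itemize}
\item Nuclearity of $C(G)$ gives $(C(G)\otimes A)\motimes B=C(G)\otimes(A\motimes B)$. Hence $\alpha\odot\id_B$ extends to an action $\alpha\motimes\id_B$ on $A\motimes B$.
\item Then $E_m:=(h_G\otimes\id)(\alpha\motimes\id_B)$ is a faithful conditional expectation onto the closure of $A^\alpha\odot B$ in $A\motimes B$. Since $A^\alpha$ is nuclear, this closure is $A^\alpha\otimes B$.
\item With $E:=(h_G\otimes\id)(\alpha\otimes\id_B)$ on $A\otimes B$, one has $Eq=qE_m$.
\item Since $q$ is injective on $A^\alpha\otimes B$, every $x\in(\Ker q)_+$ satisfies $E_m(x)=0$, hence $x=0$.
\end{itemize}
The hypothesis on $C(G)$ is used exactly once, to build $\alpha\motimes\id_B$, and coamenability never enters.
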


Note that $C(G)$ is nuclear if $G$ is coamenable \cite{Bedos-Murphy-Tuset2003amenabilityII}, which is the case for $\SU_q(2)$ and $\SO_q(3)$ for all $q\in[-1,1]\setminus\{0\}$. 
In the proof, the maximal tensor product of C*-algebras is denoted by $\motimes$. 

\begin{proof}
	The same argument as \cite[Theorem 4.5.2]{Brown-Ozawa-book} shows the statement, but we include the proof for convenience of the reader. 
	If $A$ is nuclear, then so is $A^\alpha$ as the range of the conditional expectation $(h_G\otimes\id_A)\alpha\colon A\to A^\alpha$. 
	Conversely, we assume the nuclearity of $A^\alpha$ and take any C*-algebra $B$. 
	
	The $*$-homomorphism $A^\alpha\otimes B = A^\alpha\motimes B \to A\motimes B$ induced by the inclusion $A^\alpha\odot B \subset A\odot B$ must be injective because so is its composition with the canonical quotient map $q\colon A\motimes B\to A\otimes B$. 
	Since $C(G)$ is nuclear, 
	we see $C(G)\otimes (A\motimes B) = C(G)\motimes A\motimes B = (C(G)\otimes A)\motimes B$ and thus $\alpha\odot\id_B \colon A\odot B\to (C(G)\otimes A)\odot B$ induces a well-defined $*$-homomorphism denoted by $\alpha\motimes\id_B\colon A\motimes B\to C(G)\otimes (A\motimes B)$. 
	It is not hard to check that $\alpha\motimes\id_B$ is a continuous left $G$-action on $A\motimes B$. 
	Then, we have the faithful conditional expectations 
	\begin{align*}
		E :={}& 
		(h_G\otimes\id_{A\otimes B})(\alpha\otimes\id_B) 
		\colon A\otimes B\to A^\alpha\otimes B, 
		\\
		E_m :={}& 
		(h_G\otimes\id_{A\motimes B})(\alpha\motimes\id_B) 
		\colon A\motimes B 
		\to 
		\cspan\{ a\otimes b\in A\motimes B \mid a\in A^\alpha, b\in B \} = A^\alpha\otimes B , 
	\end{align*}
	where for the last equality we used the nuclearity of $A^\alpha$. 
	We have $(\alpha\otimes\id_B)q=(\id_{C(G)}\otimes q)(\alpha\motimes\id_B)$ and thus $Eq=qE_m$. 
	It follows from the injectivity of $q$ on $A^\alpha\otimes B\subset A\motimes B$ and the faithfulness of $E_m$ that $(\Ker q)_+=0$ and thus $\Ker q=0$. 
\end{proof}

\begin{rem}\label{rem_biGalois}
	Let $G$ and $H$ be compact quantum groups such that we have a unitary monoidal equivalence $\cF\colon \Rep H\simeq \Rep G$. 
	For each $\pi = (\cH_\pi,V_\pi)\in\Rep H$, we write $\cF(\pi)=(\cG_\pi,U_\pi)\in \Rep G$, i.e., $\cG_\pi:=\cH_{\cF(\pi)}$ and $U_\pi:=V_{\cF(\pi)}$. 
	For $\pi\in\Rep H$, we take orthonormal bases of $\cG_\pi$ and $\cH_\pi$ and express $V_\pi=(v^{\pi}_{i,j})_{i,j}$ and $U_\pi=(u^{\pi}_{k,l})_{k,l}$ using $v^{\pi}_{i,j}\in C(H)$ and $u^{\pi}_{k,l}\in C(G)$. 
	Then, there is a unital C*-algebra $P$ equipped with a continuous left $G$-action $\lambda$ and a continuous right $H$-action $\rho$ satisfying the following (see e.g., \cite[Theorem 2.3.11]{Neshveyev-Tuset-book}). 
	\begin{itemize}[leftmargin=1.5em]
		\item
		$(\lambda\otimes\id_{C(H)})\rho = (\id_{C(G)}\otimes \rho)\lambda$. 
		\item
		there is a unique faithful state $h_{P}$ such that $(h_G\otimes \id_P)\lambda(x) = h_{P}(x)1_P = (\id_P \otimes h_H)\rho(x)$ for all $x\in P$. 
		\item
		For $\pi\in\Irr G\cong\Irr H$, there is a unitary $X_\pi\in \cB(\cH_\pi, \cG_\pi)\otimes P$ such that, by expressing $X_\pi=(x^{\pi}_{i,k})_{i,k}$ with the orthonormal bases of $\cG_\pi$ and $\cH_\pi$ taken above, 
		\begin{itemize}[leftmargin=1.5em]
			\item
			$h_P(x^{\varpi *}_{j,l} x^{\pi}_{i,k}) = \delta_{\pi,\varpi}\delta_{k,l} h_G(u^{\varpi *}_{j,m} u^{\pi}_{i,m})$, 
			$h_P(x^{\pi}_{i,k} x^{\varpi *}_{j,l}) = \delta_{\pi,\varpi}\delta_{i,j} h_H(v^{\pi}_{n,k} v^{\varpi *}_{n,l})$, 
			\item
			$\lambda(x^{\pi}_{i,k}) = \sum_{m=1}^{\dim_\bC\cG_\pi} u^{\pi}_{i,m}\otimes x^{\pi}_{m,k}$, 
			$\rho(x^{\pi}_{i,k}) = \sum_{n=1}^{\dim_\bC\cH_\pi} x^{\pi}_{i,n}\otimes v^{\pi}_{n,k}$, 
			\item
			$P=\cspan\{ x^{\pi}_{m,n} \mid \pi\in\Irr H, m\in\{1,\cdots,\dim_\bC\cG_\pi\}, n\in\{1,\cdots,\dim_\bC\cH_\pi\} \}$, 
		\end{itemize}
		for all $\pi,\varpi\in\Irr H$, $i\in\{1,\cdots,\dim_\bC\cG_\pi\}$, $j\in\{1,\cdots,\dim_\bC\cG_\varpi\}$, $m\in\{1,\cdots, \min\{\dim_\bC\cG_\pi,\dim_\bC\cG_\varpi\}\}$, $k\in\{1,\cdots,\dim_\bC\cH_\pi\}$, $l\in\{1,\cdots,\dim_\bC\cH_\varpi\}$, $n\in\{1,\cdots, \min\{\dim_\bC\cH_\pi,\dim_\bC\cH_\varpi\}\}$. 
	\end{itemize}
	We call this triple $(P,\lambda,\rho)$ the \emph{$(G,H)$-biGalois object} witnessing the unitary monoidal equivalence $\cF$. 
	Moreover, when we consider the GNS $*$-representation of $P$ on $L^2(P)$ associated with $h_P$, then $\lambda$ and $\rho$ uniquely extends to a left $G$-action and a right $H$-action on the von Neumann algebra $Q := P''\subset \cB(L^2(P))$, which are denoted by $\bar{\lambda}$ and $\bar{\rho}$ respectively. Note that 
	\begin{align}\label{eq_rem_biGalois_0}
		&
		(\bar{\lambda}\otimes\id_{L^\infty(H)})\bar{\rho} = (\id_{L^\infty(G)}\otimes \bar{\rho})\bar{\lambda} . 
	\end{align}
\end{rem}

In the setting of \autoref{rem_biGalois}, we implement $\rho$ with a unitary and compare it with $V^H$ for later usage. 
We have the unitary $Y^P\colon L^2(P)\to \cH_P := \bigoplus_{\pi\in\Irr G} \cH_\pi^{\oplus\dim_\bC\cG_\pi}$ sending each $x^{\pi}_{i,k}$ regarded as a vector in $L^2(P)$ to $( \delta_{i,j} \sqrt{h_G(u^{\pi *}_{i,1} u^{\pi}_{i,1})} e^\pi_{k} )_{j=1}^{\dim_\bC\cG_\pi}$, where $(e^\pi_k)_k$ denotes the orthonormal basis of $\cH_\pi$ fixed above, 
and, for all $\tau\in\Irr H$, $i\in\{1,\cdots,\dim_\bC\cG_\tau\}$, $k\in\{1,\cdots,\dim_\bC\cH_\tau\}$, $x\in P$, and $\xi\in L^2(H)$, by regarding $x^{\tau}_{i,k}$, $1_P$, and $x$ as elements in $L^2(P)$, 
\begin{align*}
	&
	\bigoplus_{\pi\in\Irr G} V_{\pi}^{\oplus\dim_\bC\cG_\pi} 
	(Y^P x^{\tau}_{i,k}\otimes \xi) 
	\\={}& 
	\bigoplus_{\pi\in\Irr G} V_{\pi}^{\oplus\dim_\bC\cG_\pi} 
	\Bigl( ( \delta_{i,j} \sqrt{h_G(u^{\tau *}_{i,1} u^{\tau}_{i,1})} e^\tau_{k} )_{j=1}^{\dim_\bC\cG_\tau} \otimes \xi \Bigr) 
	\\={}& 
	\sum_{l=1}^{\dim_\bC\cH_\tau} 
	( \delta_{i,j} \sqrt{h_G(u^{\tau *}_{i,1} u^{\tau}_{i,1})} e^\tau_{l} )_{j=1}^{\dim_\bC\cG_\tau} \otimes v^{\tau}_{l,k} \xi 
	\\={}& 
	\sum_{l=1}^{\dim_\bC\cH_\tau} 
	Y^P x^{\tau}_{i,l}\otimes v^{\tau}_{l,k} \xi 
	= 
	(Y^P \otimes\id_{L^2(H)}) \rho(x^{\tau}_{i,k}) (e^{\mathbbm{1}}_1\otimes \xi) , 
\end{align*}
which implies, by setting $U^P := \bigoplus_{\pi\in\Irr G} V_{\pi}^{\oplus\dim_\bC\cG_\pi}$, 
\begin{align}\label{eq_rem_biGalois_1}
	&
	U^P(Y^Px\otimes \xi) = (Y^P \otimes\id_{L^2(H)}) \rho(x) (e^{\mathbbm{1}}_1\otimes \xi) . 
\end{align}
Thus, for all $x\in P$ we have 
\begin{align}\label{eq_rem_biGalois_2}
	&
	\Ad (Y^P \otimes 1_{C(H)}) \rho(x) = \Ad U^P (Y^P xY^{P*} \otimes\id_{L^2(H)}) 
\end{align}

In particular, when $G=H$ and $\cF=\id_{\Rep H}$, we have $(P,\lambda,\rho)=(C(H),\Delta_H,\Delta_H)$ and it follows from \eqref{eq_rem_biGalois_1} that 
\begin{align}\label{eq_rem_biGalois_3}
	&
	\Ad(Y^{C(H)}\otimes 1_{C(H)})(V^H) = U^{C(H)} = \bigoplus_{\pi\in\Irr H} V_{\pi}^{\oplus\dim_\bC\cH_\pi} . 
\end{align}

Then, $\Ad Y^{C(H)}$ restricts to a unital normal $*$-homomorphism 
\begin{align*}
	&
	\ell^\infty(\Lambda) = 
	\{ (\id_{c_0(\Lambda)}\otimes\omega) (V^H) \in\cB(L^2(H)) \mid \omega\in\cB(L^2(H))_* \}'' 
	\\\to{}& 
	\biggl\{ (\id_{\cB(\bigoplus_{\pi\in\Irr H} \cH_{\pi}^{\oplus\dim_\bC\cH_\pi})}\otimes\omega) \biggl(\bigoplus_{\pi\in\Irr H} V_{\pi}^{\oplus\dim_\bC\cH_\pi}\biggr) \ \bigg|\  \omega\in\cB(L^2(H))_* \biggr\}'' 
	\\\subset{}& 
	\prod_{\pi\in\Irr H} \id_{\bC^{\oplus\dim_\bC\cH_\pi}}\otimes \cB(\cH_\pi) 
\end{align*}
such that when we write $\pr_\pi\colon \prod_{\pi\in\Irr H} \id_{\bC^{\oplus\dim_\bC\cH_\pi}} \otimes \cB(\cH_\pi) \to \cB(\cH_\pi)$ for the projection, $( (\pr_\pi \Ad Y^{C(H)}) \otimes \id_{C(H)} ) (V^H) = V_\pi$. Since $(\pr_\pi \Ad Y^{C(H)}) \otimes \id_{C(H)}$ restricts to a non-degenerate $*$-homomorphism from $c_0(\Lambda)$, it follows from the uniqueness of $\Pi_\pi$ that $\pr_\pi \Ad Y^{C(H)} = \Pi_\pi$. 
We will freely use the notation introduced so far in the proof of \autoref{thm_minimal_action}.

\begin{thm}\label{thm_minimal_action}
	Let $(q,G,\lambda)$ be one of the triples as follows. 
	\begin{enumerate}[leftmargin=*,label=(\roman*)]
		\item\label{item_thm_minimal_action_SU+q(2)}
		$q= n-\sqrt{n^2-1}$ with $n\in\bZ_{\geq 2}$, $G=\SU_q(2)$, and $\lambda \in \{q^{\frac{1}{k}} \mid k\in\bZ_{\geq 1} \}\cup\{1\}$. 
		\item\label{item_thm_minimal_action_SU-q(2)}
		$q=\frac{n-\sqrt{n^2-4}}{2}$ with $n\in\bZ_{\geq 3}$, $G=\SU_{-q}(2)$, and $\lambda \in \{q^{\frac{1}{k}} \mid k\in\bZ_{\geq 1} \}\cup\{1\}$. 
		\item\label{item_thm_minimal_action_SOq(3)}
		$q=\frac{\sqrt{n}-\sqrt{n-4}}{2}$ with $n\in\bZ_{\geq 5}$, $G=\SO_q(3)$, and $\lambda \in \{q^{\frac{2}{k}} \mid k\in\bZ_{\geq 1} \}\cup\{1\}$. 
	\end{enumerate}
	Then, there is a minimal left $G$-action on the Araki--Woods factor of type $\mathrm{III}_\lambda$. 
\end{thm}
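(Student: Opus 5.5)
The strategy is to transport an outer action of a Kac-type quantum group across a monoidal equivalence by means of the biGalois object of \autoref{rem_biGalois}. In each case \ref{item_thm_minimal_action_SU+q(2)}--\ref{item_thm_minimal_action_SOq(3)}, $\Rep G$ is unitarily monoidally equivalent to $\Rep H$ for a free quantum group $H$ of Kac type:
\begin{itemize}
\item in case \ref{item_thm_minimal_action_SU+q(2)}, $H=\rO^+\bigl(\begin{smallmatrix}0&-1_{\bM_n}\\1_{\bM_n}&0\end{smallmatrix}\bigr)$;
\item in case \ref{item_thm_minimal_action_SU-q(2)}, $H=\rO^+(n)$;
\item in case \ref{item_thm_minimal_action_SOq(3)}, $H=S_n^+$.
\end{itemize}
These equivalences are the ones recalled at the end of \autoref{ssec_freeQG}. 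By \autoref{eg_tensor_faith_Sn+}, \autoref{eg_tensor_faith_On+} and \autoref{eg_tensor_faith_OJn+}, $\Lambda=\wc{H}$ has a tensorially faithful representation, so it is RFD. Hence \autoref{thm_RFD_UHF} (or \autoref{thm_IU_JiangSu}) gives a trace-preserving outer action of $\Lambda$ on the hyperfinite $\mathrm{II}_1$ factor $\cR$. Dually, this yields a right $H$-action $\beta$ on a von Neumann algebra $N$ with $\beta$ minimal, namely $N=\cR\barrtimes\Lambda$. The minimality here consists of two properties: $N^\beta{}'\cap N=\bC$ follows from outerness, and $N$ is generated by $N^\beta$ and the spectral subspaces.

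Next I form the cotensor product $N\square_H Q$ with the biGalois object $(Q,\bar\lambda,\bar\rho)$. Concretely, this is the fixed-point algebra of the diagonal right $H$-action combining $\beta$ on $N$ with the inverse action coming from $\bar\rho$ on $Q$. It carries the left $G$-action induced by $\bar\lambda$, using \eqref{eq_rem_biGalois_0}.

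This construction is the von Neumann algebraic Morita/monoidal transfer of actions. Its spectral subspaces for $\pi\in\Irr G$ correspond to those of $\beta$ for the matching $\pi\in\Irr H$, tensored with the $x^\pi_{i,k}$. Minimality transfers for the following reasons:
\begin{itemize}
\item the fixed-point algebra is unchanged, $(N\square_HQ)^{G}\cong N^{\beta}=\cR$;
\item the relative commutant condition is a statement about intertwiner spaces inside $\Rep$, which the equivalence preserves;
\item the fullness of spectral subspaces is likewise a statement about $\Rep$ that the equivalence preserves.
\end{itemize}
To make all of this rigorous I will use the unitary $Y^P$ and identity \eqref{eq_rem_biGalois_2}, which implement $\bar\rho$ spatially. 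This gives a concrete realization of $N\square_HQ$ inside $\cR\barotimes\cB(\cH_P)\barotimes\cdots$ and shows that the result is a factor.

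For the type and injectivity, injectivity comes from \autoref{lem_fixed_nuclear} together with its von Neumann analogue. The point is that $C(G)$ is nuclear by coamenability of $\SU_q(2)$ and $\SO_q(3)$, and the fixed-point algebra $\cR$ is injective, so the whole algebra is injective. For the type, the modular theory of the $G$-invariant state is governed by the Haar state of $G$, whose modular group scales the matrix coefficients by powers of $|q|^{2}$, as in the formula for $h_{\SU_q(2)}$. The resulting factor is then of type $\mathrm{III}_{q^2}$ or similar. Its Connes spectrum should be computed from the eigenvalues $|q|^{2i-1-\dim}$ that occur.

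To reach the other $\lambda$, the values $q^{1/k}$ and $1$, I would tensor with an auxiliary Powers or Araki--Woods factor carrying the trivial $G$-action. Minimality survives provided the relative commutant of the fixed points stays trivial. I would arrange this by choosing the initial $\Lambda$-action on an appropriate type $\mathrm{III}_\mu$ injective factor instead of $\cR$, for instance by tensoring $\cR$ with a $\Lambda$-invariant Powers factor, so that the $T$-set is generated by $\mu$ and $q^2$ (or $q$ in the $\SU$ cases, where the sign of $q$ does not matter).

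The main obstacle is the exact identification of the type, i.e.\ computing Connes' $S$-invariant of the transferred factor and showing that precisely $\lambda$ is achieved. I would attack this by computing the centralizer and the modular spectrum on spectral subspaces, using the weights $h_G(u^{\pi*}_{i,1}u^\pi_{i,1})$. Combined with ergodicity of the $G$-action on the relative commutant, this should show that the flow of weights is the predicted periodic one, and by Connes--Haagerup the factor is then the Araki--Woods factor.
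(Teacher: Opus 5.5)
Your construction of the $G$-action (the dual $H$-action on $\cR\barrtimes\Lambda$, transported by cotensoring with the biGalois object) is a reasonable variant of the paper's braided tensor product $P\boxtimes_\Lambda A$, which is essentially the C*-level version of the same object. The transfer of ${N^{\bar\beta}}'\cap N=\bC$ can also be made rigorous; the paper does it directly, by noting that an element of the relative commutant gives an $(M,M)$-bilinear isometry $L^2(M)\to\cH_P\otimes L^2(M)$, and outerness forces this to be a multiple of the inclusion.

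The injectivity step, however, does not work as written. \autoref{lem_fixed_nuclear} requires the fixed-point algebra to be a \emph{nuclear C*-algebra}, and $\cR$ is not nuclear as a C*-algebra. The ``von Neumann analogue'' you invoke is not in the paper and is not automatic. This is not a side issue, because nothing on the $H$-side is injective: $\cR\barrtimes\Lambda$ contains $L^\infty(H)$ with a conditional expectation, and $H$ is not coamenable. So injectivity must be produced after the transfer. The fix is to carry out the transfer at the C*-level, starting from the nuclear algebra $A$ ($\cZ$ or a UHF algebra) with its outer action. The resulting C*-algebra $B$ then has $B^\beta\cong A$, so \autoref{lem_fixed_nuclear} makes $B$ nuclear and $N:=B''$ injective.

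The more serious gap is that you leave the identification of the type open, and the flow-of-weights/Connes--Haagerup route you sketch is never carried out. The missing idea is short.
\begin{enumerate}
\item Take the faithful normal state $f=\tr_M\circ E$, where $E$ is the invariant conditional expectation onto $N^{\bar\beta}\cong M$. In the paper this is $f=h_P\circ(\id\otimes\tr_M)$ on the picture $N_0$.
\item Since $\tr_M$ is a trace and $\gamma$ is trace-preserving, $N^{\bar\beta}$ lies in the centralizer $N^f$.
\item The biGalois orthogonality relations show that the spectral elements $(Y^Px^\pi_{i,k}Y^{P*}\otimes1_M)\gamma_P(a)$ are eigenvectors of $\Delta_f$ with eigenvalue $|q|^{\dim_\bC\cG_\pi+1-2i}$.
\item Minimality gives ${N^f}'\cap N^f\subset{N^{\bar\beta}}'\cap N=\bC$, so the centralizer is a factor.
\item Connes' theorem then yields $S(N)\cap(0,\infty)=\operatorname{Sp}(\Delta_f)\cap(0,\infty)$. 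This is $|q|^{\bZ}$ for $\SU_{\pm q}(2)$, giving type $\mathrm{III}_{|q|}$ rather than $\mathrm{III}_{q^2}$, and $q^{2\bZ}$ for $\SO_q(3)$.
\item Connes' uniqueness of the injective $\mathrm{III}_\mu$ factor for $0<\mu<1$ finishes the argument, with no appeal to Haagerup.
\end{enumerate}

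For general $\lambda$ you do not need to change the initial $\Lambda$-action. Tensoring with the Araki--Woods factor $R_\lambda$, carrying the trivial $G$-action, automatically preserves minimality, because $(N^{\bar\beta}\barotimes R_\lambda)'\cap(N\barotimes R_\lambda)=({N^{\bar\beta}}'\cap N)\barotimes\bC$. The identification $N\barotimes R_\lambda\cong R_\lambda$ for $\lambda^k=\mu$ or $\lambda=1$ then comes from Araki--Woods. Your $T$-set reasoning cannot replace this: $T$ of a tensor product is an intersection rather than something ``generated'', and $T$ alone does not certify type $\mathrm{III}_\lambda$.
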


Note that \autoref{main_subfactor} \ref{item_main_subfactor_2} follows from this. 

\begin{proof}
	In this situation, there is a compact quantum group $H$ with $\Lambda:=\wc{H}$ that admits some tensorially faithful $\theta\in\Rep\Lambda$ such that $\Rep H\simeq\Rep G$ by \autoref{ssec_freeQG}. 
	Indeed, by \autoref{eg_tensor_faith_Sn+}, \autoref{eg_tensor_faith_On+}, and \autoref{eg_tensor_faith_OJn+}, 
	we may take $H = \rO^+\bigl(\begin{smallmatrix}0 & - 1_{\bM_n} \\ 1_{\bM_n} & 0\end{smallmatrix}\bigr)$ in the case of \ref{item_thm_minimal_action_SU+q(2)}, $H = \rO^+(n)$ in the case of \ref{item_thm_minimal_action_SU-q(2)}, and $H = S_n^+$ in the case of \ref{item_thm_minimal_action_SOq(3)}. 
	Via the unitary monoidal equivalence $\Rep H\simeq\Rep G$, we shall regard $\Irr H=\Irr G$. 
	By applying \autoref{thm_IU_JiangSu} or \autoref{thm_RFD_UHF} to $\Gamma:=\Lambda^{\op}$ (cf.\ \autoref{rem_tensor_faith_op}), there is an outer trace-preserving left $\Lambda$-action $\gamma$ on a unital simple nuclear separable C*-algebra $A$ with a unique faithful tracial state $\tr_A$ such that $\gamma$ normally extends to an outer trace-preserving left $\Lambda$-action, still denoted by $\gamma$, on $M:=A''\subset \cB(L^2(A,\tr_A))$, which is a hyperfinite $\mathrm{II}_1$ factor with separable predual. 
	We write $\tr_M$ for the unique faithful normal tracial state on $M$. 
	For each $\pi\in\Irr H$, we write $\gamma_{\pi} := (\Pi_\pi\otimes\id_A)\gamma$ and $\gamma_P:=\bigoplus_{\pi\in\Irr H}\gamma_{\pi}^{\oplus\dim_\bC\cG_\pi} \colon M\to \cB(\cH_P)\barotimes M$. 
	
	We consider the $(G,H)$-biGalois object $(P,\lambda,\rho)$ and the unital faithful $*$-representation of $P$ on the Hilbert space $L^2(P)$ obtained by the GNS construction of the faithful state $h_P$ on $P$. 
	Then, the braided tensor product 
	\begin{align*}
		&
		B:=P \boxtimes_\Lambda A = \cspan (\rho(P)\otimes1_A)(1_P\otimes\gamma(A)) \subset \cM( P \otimes \cK(L^2(H)) \otimes A) , 
	\end{align*}
	which is a well-defined C*-algebra (see \cite[Proposition 8.3]{Vaes2005new} and \cite[Section 3]{Nest-Voigt2010equivariant}), is unital by construction and 
	equipped with the well-defined continuous left $G$-action $\beta:=\lambda\otimes\id_{\cK(L^2(H)) \otimes A}$. 
	Then, since 
	\begin{align*}
		(h_G\otimes\id_B)\beta 
		= 
		\bigl( (h_G\otimes \id_{P})\lambda \bigr) \otimes \id_{\cK(L^2(H))\otimes A} 
		= 
		h_P \otimes \id_{\cK(L^2(H))\otimes A} , 
	\end{align*}
	we have 
	\begin{align}\label{eq_prf_thm_minimal_action_2}
		&
		\begin{aligned}
			B^\beta ={}& \cspan (h_G\otimes\id_B)\beta(B) 
			= \cspan 1_P\otimes (h_P\otimes \id_{\cK(L^2(H))\otimes A}) (B) 
			\\={}& \cspan (h_P(P)1_P\otimes1_{\cM(\cK(L^2(H))\otimes A)})(1_P\otimes\gamma(A)) = 1_P\otimes\gamma(A) . 
		\end{aligned}
	\end{align}
	Thus, it follows from the nuclearity of $A$ that $P$ is nuclear by \autoref{lem_fixed_nuclear}. 
	
	We define $N\subset Q\barotimes \cB(L^2(P))\barotimes M$ as the von Neumann subalgebra generated by $B$. 
	By \eqref{eq_rem_biGalois_0}, we observe that the unital injective normal $*$-homomorphism $\bar{\beta} := \bar{\lambda}\otimes \id_{\cB(L^2(P))\barotimes M} \colon N\to L^\infty(G)\barotimes N$ is the unique normal extension of $\beta$ and that $\bar{\beta}$ is a left $G$-action on $N$. 
	Since $B$ is nuclear, $N$ must be injective. Since $L^2(P)\otimes L^2(H)\otimes L^2(M,\tr_M)$ is separable, $N_*$ is separable. 
	It follows from $\cspan\lambda(P)(1_{C(G)}\otimes P)=C(G)\otimes P$ (see \cite[Proposition 7.6.2]{DeCommer-thesis}) that $\cspan(1_{C(G)}\otimes B)\beta(B)=C(G)\otimes B$ and thus that $L^\infty(G)$ is generated as a von Neumann algebra by $(\id_{L^\infty(G)}\otimes N_*)\bar{\beta}(N)$. 
	
	Before going further, we give an alternative picture of $N$. 
	We note that 
	\begin{align*}
		B_0:={}&
		\cspan ( Y^P P Y^{P*}\otimes \id_{ L^2(A,\tr_A) } ) \gamma_{P}(A) 
		\subset 
		\cB ( \cH_P \otimes L^2(A,\tr_A) ) 
	\end{align*}
	is a C*-subalgebra that is $*$-isomorphic to $B$. 
	Indeed, since it follows from \eqref{eq_rem_biGalois_3} and $\Pi_\pi=\pr_\pi\Ad Y^{C(H)}$ that 
	\begin{align*}
		&
		\Ad (V_\pi^*\otimes 1_{\cM(A)}) (1_{\cB(\cH_\pi)} \otimes \gamma) 
		= 
		(\id_{\cB(\cH_\pi)}\otimes\gamma)\gamma_{\pi} , 
	\end{align*}
	also by using \eqref{eq_rem_biGalois_2}, the images by the injective normal $*$-homomorphism 
	\begin{align}\label{eq_prf_thm_minimal_action_3}
		&
		\begin{aligned}
			& 
			\Ad \bigl( (Y^{P*}\otimes1_{\cB(L^2(H))}) U^{P} \otimes 1_{M} \bigr) \circ (\id_{\cK(\cH_P)} \otimes \gamma) 
			\\&{}\colon 
			\cB( \cH_P ) \barotimes M 
			\to 
			\cB(L^2(P)) \barotimes \cB(L^2(H)) \barotimes M 
		\end{aligned}
	\end{align}
	of $\gamma_{P}(A)$ and $Y^P P Y^{P*}\otimes 1_{A}$ are $1_P\otimes \gamma(A)$ and $\rho(P)\otimes 1_{A}$ respectively. 
	Thus, \eqref{eq_prf_thm_minimal_action_3} restricts to a $*$-isomorphism from $B_0$ to $B$. If we write $N_0$ for the von Neumann subalgebra of the domain of \eqref{eq_prf_thm_minimal_action_3} generated by $B_0$, then \eqref{eq_prf_thm_minimal_action_3} restricts to a normal $*$-isomorphism from $N_0$ to $N$. 
	
	We are going to show that ${N^{\bar{\beta}}}'\cap N\cong \bC$, from which $\cZ(N)=\bC$ will follow. By \eqref{eq_prf_thm_minimal_action_2}, it suffices to show $(1_P\otimes\gamma(M))'\cap N\cong \bC$. Via the $*$-isomorphism \eqref{eq_prf_thm_minimal_action_3}, we show $\gamma_P(M)' \cap N_0\cong\bC$ instead. 
	We consider the following faithful normal state 
	\begin{align*}
		&
		f\colon 
		N_0 \xto{\id_{\cB(\cH_P)}\otimes \tr_M}  
		Y^P Q Y^{P*} \xto{h_P \Ad Y^{P*}} \bC, 
	\end{align*}
	where $(\id\otimes \tr_M)(N_0) \subset Y^P Q Y^{P*}$ by $(\id_{\cB(\cH_\pi)}\otimes\tr_M)\gamma_{\pi}=\id_{\cH_\pi}\otimes\tr_M$ for all $\pi\in\Irr H$ as $\gamma$ is $\tr_M$-preserving. 
	Note that, when we write $\Omega_P\in L^2(P)$ and $\Omega_M\in L^2(M,\tr_M)$ for the cyclic unit vectors associated with the GNS constructions of $h_P$ and $\tr_M$ respectively, $f$ is implemented by $Y^P\Omega_P\otimes \Omega_M$ with the canonical embedding $N_0\subset \cB\bigl( \cH_P \otimes L^2(M,\tr_M) \bigr)$. 
	
	We take any $x\in \gamma_{P}(M)' \cap N_0$. 
	Then, $x(Y^P\Omega_P\otimes \Omega_M) \in \cH_P\otimes L^2(M,\tr_M)$ satisfies that for all $a\in M$, 
	\begin{align*}
		&
		\gamma_{P}(a)x(Y^P\Omega_P\otimes \Omega_M) 
		= 
		x\gamma_{P}(a)(Y^P\Omega_P\otimes \Omega_M) 
		= 
		x(Y^P\Omega_P\otimes a\Omega_M)
	\end{align*}
	since $Y^P\Omega_P = Y^P x^{\mathbbm{1}}_{1,1} = e^{\mathbbm{1}}_1$ and $\gamma_{\mathbbm{1}}=\id_M$. 
	Thus, the isometry $T_x\colon L^2(M,\tr_M) \ni\xi \mapsto x(Y^P\Omega_P\otimes \xi) \in \cH_P\otimes L^2(M,\tr_M)$ is $(M,M)$-bilinear, where we equipped the right $M$-modules structures on $L^2(M,\tr_M)$ and $\cH_P\otimes L^2(M,\tr_M)$ by $J_M(-)^*J_M$ and $\id_{\cH_P}\otimes J_M(-)^*J_M$ using the modular conjugation $J_M$ associated with $\tr_M$. It follows from the outerness of $\gamma$ and \autoref{rem_outer_tensorcat} that $T_x$ coincides with the inclusion of $L^2(M,\tr_M) \cong \cH_{\mathbbm{1}}\otimes L^2(M,\tr_M)$ into $\cH_P\otimes L^2(M,\tr_M)$ up to scalar multiplication and in particular, $x(Y^P\Omega_P\otimes \Omega_M) \in \bC(Y^P\Omega_P\otimes \Omega_M)$. Since $f$ is faithful, $N_0\ni x\mapsto x(Y^P\Omega_P\otimes \Omega_M) \in \cH_P\otimes L^2(M,\tr_M)$ is injective. 
	Therefore, $x\in \bC 1_{N_0}$, which implies $\gamma_{P}(M)' \cap N_0 \cong \bC$ as desired. 
	
	We have shown that $\bar{\beta}$ is a minimal left action of $G$ on the injective factor $N$ with separable predual. We are going to determine the type of $N\cong N_0$. 
	We fix an orthonormal basis of $\cG_\pi$ for each $\pi\in\Irr H=\Irr G$ as in the end of \autoref{ssec_freeQG}. 
	Take $a,b\in M$ with $n\in\bZ_{\geq 1}$ and $\pi,\varpi\in\Irr H$, $i\in\{1,\cdots,\dim_\bC\cG_\pi\}$, $j\in\{1,\cdots,\dim_\bC\cG_\varpi\}$, $k\in\{1,\cdots,\dim_\bC\cH_\pi\}$, $l\in\{1,\cdots,\dim_\bC\cH_\varpi\}$ arbitrarily. 
	We calculate 
	\begin{align*}
		&
		f\bigl( \gamma_{P}(b)^* (Y^Px^{\varpi *}_{j,l} x^{\pi}_{i,k} Y^{P*} \otimes 1_M) \gamma_{P}(a) \bigr) 
		\\={}& 
		\bigl\bra \gamma_{P}(b) (e^{\mathbbm{1}}_{1}\otimes \Omega_M), (Y^Px^{\varpi *}_{j,l} x^{\pi}_{i,k} Y^{P*} \otimes 1_M) \gamma_P(a) (e^{\mathbbm{1}}_{1}\otimes \Omega_M) \bigr\ket
		\\={}& 
		\bigl\bra e^{\mathbbm{1}}_{1}\otimes b\Omega_M, (Y^Px^{\varpi *}_{j,l} x^{\pi}_{i,k} Y^{P*} \otimes 1_M) (e^{\mathbbm{1}}_{1}\otimes a\Omega_M) \bigr\ket
		\\={}& 
		h_P( x^{\varpi *}_{j,l} x^{\pi}_{i,k} ) \tr_M(b^*a) 
		=\delta_{\pi,\varpi} \delta_{k,l} 
		h_G( u^{\varpi *}_{j,1} u^{\pi}_{i,1} ) \tr_M(a b^*) 
		\\={}& 
		\frac{\delta_{\pi,\varpi} \delta_{i,j} \delta_{k,l}}{\dim_\bC\cH_\pi} |q|^{2i - 1 - \dim_{\bC}\cG_\pi} \tr_M(a b^*) , 
	\end{align*}
	where the last equality holds because $\dim_{\Rep G}\pi = \dim_{\Rep H} \pi = \dim_\bC\cH_\pi$ by the monoidal equivalence and the Kac type of $H$ by \autoref{rem_RFD_Kac}. 
	Also, since $\gamma$ is $\tr_M$-preserving, we compute 
	\begin{align*}
		&
		f\bigl( (Y^P x^{\pi}_{i,k} Y^{P*}\otimes1_M) \gamma_{P}(a) \gamma_{P}(b)^* (Y^P x^{\varpi *}_{j,l} Y^{P*}\otimes1_M) \bigr) 
		\\={}& 
		\tr_M(ab^*) 
		h_P ( x^{\pi}_{i,k} x^{\varpi *}_{j,l} ) 
		= 
		\tr_M(ab^*) 
		\frac{ \delta_{\pi,\varpi} \delta_{i,j} }{\dim_\bC\cH_\pi}
		h_H( v^{\pi}_{1,k} v^{\varpi *}_{1,l} ) 
		\\={}& 
		\frac{ \delta_{\pi,\varpi} \delta_{i,j} \delta_{k,l} }{\dim_\bC\cH_\pi} \tr_M(ab^*) . 
	\end{align*}
	Thus, for all $a,b\in M$ and $\pi,\varpi,i,j,k,l$ as above, we have 
	\begin{align*}
		&
		f\bigl( \gamma_{P}(b)^* (Y^P x^{\varpi *}_{j,l} Y^{P*}\otimes1_M) (Y^P x^{\pi}_{i,k} Y^{P*}\otimes1_M) \gamma_{P}(a) \bigr) 
		\\={}& 
		|q|^{\dim_\bC \cG_{\pi} + 1 - 2i} 
		f\bigl( (Y^P x^{\pi}_{i,k} Y^{P*}\otimes1_M) \gamma_{P}(a) \gamma_{P}(b)^* (Y^P x^{\varpi *}_{j,l} Y^{P*}\otimes1_M) \bigr) ,
	\end{align*}
	and the modular operator $\Delta_f$ associated to $f$ acts on $(Y^P x^{\pi}_{i,k} Y^{P*} \otimes 1_M) \gamma_{P}(a)$ regarded as a vector in $L^2(N_0,f)$ as the multiplication by the scalar $|q|^{\dim_\bC \cG_{\pi} + 1 - 2i}$. 
	
	Let $S(N_0)$ denote Connes' $S$-invariant of $N_0$. We write 
	\begin{align*}
		&
		N_0^f := \{ x\in N_0 \mid \sigma^f_t(x)=x, \forall t\in\bR \}\subset N_0 , 
	\end{align*}
	where $\sigma^f \colon \bR \ni t\mapsto \Ad \Delta_f^{t\sqrt{-1}}\in\Aut(N_0)$ is the modular automorphism associated with $f$. 
	Then, $\gamma_P(M) \subset N_0^f$ contains the image of $N^{\bar{\beta}}$ via the $*$-isomorphism \eqref{eq_prf_thm_minimal_action_3} by the description of $\Delta_f$ above and \eqref{eq_prf_thm_minimal_action_2}, which implies ${N_0^f}'\cap N_0^f \subset {N_0^f}'\cap N_0\cong \bC$ since we saw ${N^{\bar{\beta}}}'\cap N \cong \bC$. 
	Since $N_0$ and $N_0^f$ are factors, it follows from \cite[Corollary 3.2.7 (a)]{Connes1973classification}, \cite[Theorem 28.3 (3)]{Stratila-bookM2} that the intersection of $(0,\infty)$ with $S(N_0)$ is the same as that with the spectrum of $\Delta_f$, which is $\{ |q|^{\dim_\bC \cG_{\pi} + 1 - 2i} \mid \pi\in\Irr G, i\in\{1,\cdots,\dim_\bC\cG_\pi\} \}$. 
	This set equals $\{ |q|^k \mid k\in\bZ \}$ if $G=\SU_{\pm q}(2)$ for $q\in(-1,1)\setminus\{0\}$ and $\{ q^{2k} \mid k\in\bZ \}$ if $G=\SO_q(3)$ for $q\in(0,1)$. 
	Therefore, $N_0$ is of type $\mathrm{III}_{\mu}$, where $\mu=|q|$ in the case of \ref{item_thm_minimal_action_SU+q(2)} and \ref{item_thm_minimal_action_SU-q(2)}, and $\mu=q^2$ in the case of \ref{item_thm_minimal_action_SOq(3)}. 
	Thanks to the classification result \cite{Connes1976classificationinjective}, $N_0$ and thus $N$ are the Araki--Woods factor of type $\mathrm{III}_\mu$. 
	
	Finally, we consider the case of general $\lambda$ as in the statement. We write $R$ for the Araki--Woods factor of type $\mathrm{III}_\lambda$. Then, $N\barotimes R$ is the Araki--Woods factor of type $\mathrm{III}_\lambda$ by \cite[Lemma 5.6, Lemma 3.11, Thorem 7.6]{Araki-Woods1968classification}. 
	We can check that the left $G$-action $\bar{\beta}\otimes\id_R$ on $N\barotimes R$ is minimal, which completes the proof. 
\end{proof}

\appendix

\section{Actions of quantum groups}\label{sec_prelim_action}

We recall the notions of an action of a quantum group and its outerness. 

\begin{df}\label{def_action_DQG}
	Let $G$ be a compact quantum group, $A$ be a C*-algebra, and $M$ be a von Neumann algebra. We regard $c_0(\Gamma)=\bigoplus_{\pi\in\Irr G}\cB(\cH_\pi)$ via the $*$-isomorphism $\prod_{\pi\in\Irr G}\Pi_\pi$. 
	\begin{enumerate}[leftmargin=*,label=(\arabic*)]
		\item\label{item_def_action_DQG_C*}
		An injective $*$-homomorphism $\gamma\colon A\to \prod_{\pi\in\Irr G} (\cB(\cH_\pi)\otimes A)$ that is non-degenerate as a $*$-homomorphism into $\cM(c_0(\Gamma)\otimes A) = \cM(\bigoplus_{\pi\in\Irr G}\cB(\cH_\pi)\otimes A)$ is called a \emph{(C*-algebraic) left $\Gamma$-action} if we have 
		$(\id_{c_0(\Gamma)}\otimes\gamma)\gamma = (\Delta_{G}\otimes\id_A)\gamma$. 
		In this case, the pair $(A,\alpha)$ is called a left $\Gamma$-C*-algebra. 
		\item\label{item_def_action_DQG_W*}
		A \emph{(W*-algebraic) left $\Gamma$-action} on $M$ is a C*-algebraic left $\Gamma$-action $\gamma\colon M\to \prod_{\pi\in\Irr G} (\cB(\cH_\pi)\otimes M)$ such that $\gamma$ is normal as a $*$-homomorphism to $(\prod_{\pi\in\Irr G} \cB(\cH_\pi))\barotimes M$. 
		In this case, the pair $(M,\gamma)$ is called a left $\Gamma$-W*-algebra. 
	\end{enumerate}
\end{df}

\begin{rem}\label{rem_action_tensorcat}
	Let $G$ be a compact quantum group and $(A,\gamma)$ be a left $\Gamma$-C*-algebra. 
	For each $\pi\in\Rep G$, we consider the Hilbert $A$-module $\cH_\pi\otimes_\bC A$, where $A$ is regarded as a $(\bC,A)$-correspondence by the unital inclusion $\bC\to\cM(A)$, and the non-degenerate $*$-homomorphism $\gamma_{\pi} := (\Pi_\pi\otimes\id_A)\gamma\colon A\to \cB(\cH_\pi)\otimes A \subset \cL(\cH_\pi\otimes_\bC A)$. We write $\alpha(\pi)$ for the non-degenerate $(A,A)$-correspondence $(\cH_\pi\otimes_\bC A, \gamma_{\pi})$. 
	Then, together with the natural family of $(A,A)$-bilinear unitaries 
	\begin{align*}
		&
		\fu_{\pi,\varpi}\colon \alpha(\pi)\otimes_A\alpha(\varpi) 
		\ni (\xi\otimes a)\otimes (\eta\otimes b) \mapsto \xi\otimes \gamma_{\varpi}(a)(\eta\otimes b) \in \alpha(\pi\otimes\varpi) , 
	\end{align*}
	the assignment $\pi\mapsto\alpha(\pi)$ induces a well-defined unitary tensor functor $(\alpha,\fu)\colon \Rep G\to \Cor(A)$ (see e.g., \cite[Example 2.24]{Arano-Kitamura-Kubota2024tensor}). 
	Here, $\Cor(A)$ denotes the C*-tensor category of non-degenerate $(A,A)$-correspondences such that $\Hom_{\Cor(A)}((E,\phi),(F,\psi))=\{ X\in \cL(E,F) \mid X\phi(a)=\psi(a)X, \forall a\in A \}$ and $(E,\phi)\otimes(F,\psi):=(E\otimes_\psi F, \phi(-)\otimes\id_F)$ for non-degenerate $(A,A)$-correspondences $(E,\phi)$ and $(F,\psi)$. 
\end{rem}

For a unitary tensor category $\cC$ and a C*-algebra $A$, a unitary tensor functor $(\alpha,\fu)\colon \cC\to \Cor(A)$ is called a \emph{$\cC$-action} on $A$. 
For example, \autoref{rem_action_tensorcat} says that a left $\Gamma$-action on $A$ canonically induces a $\Rep G$-action on $A$. 
Whenever $A$ is non-zero, a unitary tensor functor $(\alpha,\fu)\colon \cC\to \Cor(A)$ is faithful by the semisimplicity and rigidity of $\cC$ (see e.g., \cite[Remark 4.2 (1)]{Kitamura2026actions}). 

\begin{lem}\label{lem_outer_tensorcat}
	Let $\cC$ be a unitary tensor category, $A$ be a C*-algebra with $A'\cap\cM(A)\cong \bC$, and $(\alpha,\fu)\colon \cC\to \Cor(A)$ be a unitary tensor functor. 
	Then, the $\cC$-action $(\alpha,\fu)$ on $A$ is \emph{outer} in the sense that $\alpha$ is fully faithful as a functor 
	if and only if the following two conditions hold for all irreducible objects $\pi\in\cC$. 
	\begin{enumerate}[leftmargin=*,label=(\arabic*)]
		\item\label{item_lem_outer_tensorcat_inv}
		There is no unitary in $\Hom_{\Cor(A)}(\alpha(\mathbbm{1}),\alpha(\pi))$ when $\pi$ is invertible (i.e., $\pi\otimes\overline{\pi}\cong\mathbbm{1}\cong\overline{\pi}\otimes\pi$). 
		\item\label{item_lem_outer_tensorcat_irr}
		$\Hom_{\Cor(A)}(\alpha(\pi),\alpha(\pi))\cong \bC$ when $\pi$ is non-invertible (i.e., not invertible). 
	\end{enumerate}
\end{lem}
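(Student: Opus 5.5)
Both directions reduce to computing the endomorphism and intertwiner spaces of the correspondences $\alpha(\pi)$ and translating them via semisimplicity and rigidity of $\cC$.

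\emph{Only if.} Suppose $\alpha$ is fully faithful and let $\pi$ be irreducible. Then $\Hom_{\Cor(A)}(\alpha(\pi),\alpha(\pi))\cong\Hom_\cC(\pi,\pi)\cong\bC$, which gives \ref{item_lem_outer_tensorcat_irr}. If $\pi\not\cong\mathbbm{1}$ is invertible, then $\Hom_{\Cor(A)}(\alpha(\mathbbm{1}),\alpha(\pi))\cong\Hom_\cC(\mathbbm{1},\pi)=0$, so no unitary exists, which gives \ref{item_lem_outer_tensorcat_inv}. Here I read \ref{item_lem_outer_tensorcat_inv} as concerning $\pi\not\cong\mathbbm{1}$; for $\pi=\mathbbm{1}$ the identity is such a unitary, so the condition must implicitly exclude the unit.

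\emph{If.} I would proceed in three steps.

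\emph{Step 1: reduction to irreducibles.} Every object of $\cC$ is a finite direct sum of irreducibles, and $\alpha$ is additive and faithful. So it suffices to show
\[
\dim\Hom_{\Cor(A)}(\alpha(\pi),\alpha(\varpi))=\delta_{\pi\cong\varpi}
\]
for irreducible $\pi,\varpi$, since these dimensions then match those in $\cC$.

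\emph{Step 2: Frobenius reciprocity.} Since $\alpha$ is a unitary tensor functor, $\alpha(\overline\pi)$ is a conjugate of $\alpha(\pi)$ in $\Cor(A)$. This yields a linear isomorphism
\[
\Hom(\alpha(\pi),\alpha(\varpi))\cong\Hom(\alpha(\mathbbm{1}),\alpha(\overline\pi\otimes\varpi)).
\]
For the case $\pi=\varpi$, the hypothesis $A'\cap\cM(A)\cong\bC$ is used here: it gives $\Hom(\alpha(\mathbbm{1}),\alpha(\mathbbm{1}))=\cM(A)\cap A'=\bC$, so $\alpha(\mathbbm{1})\cong A$ is irreducible. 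Decompose $\overline\pi\otimes\varpi\cong\bigoplus_\sigma\sigma^{\oplus m_\sigma}$. The problem then reduces to showing $\Hom(\alpha(\mathbbm{1}),\alpha(\sigma))=0$ for every irreducible $\sigma\ne\mathbbm{1}$.

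\emph{Step 3: vanishing of $\Hom(\alpha(\mathbbm{1}),\alpha(\sigma))$ for $\sigma\ne\mathbbm{1}$.} Suppose $T\in\Hom(\alpha(\mathbbm{1}),\alpha(\sigma))$ is non-zero. I would split into two cases.

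If $\sigma$ is non-invertible, then $T^*T\in\Hom(\alpha(\mathbbm{1}),\alpha(\mathbbm{1}))=\bC$, so after scaling $T$ is an isometry. Then $TT^*$ is a projection in $\Hom(\alpha(\sigma),\alpha(\sigma))=\bC$ by \ref{item_lem_outer_tensorcat_irr}, hence $TT^*=1$. Thus $T$ is unitary, so $\alpha(\sigma)\cong\alpha(\mathbbm{1})$. This contradicts \ref{item_lem_outer_tensorcat_irr}, because $\dim\Hom(\alpha(\sigma),\alpha(\sigma))\ge\dim\Hom(\sigma,\sigma\otimes\overline\sigma\otimes\sigma)\ge2$ by faithfulness, as $\sigma\otimes\overline\sigma$ contains a non-unit summand when $\sigma$ is non-invertible.

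If $\sigma$ is invertible, then $\alpha(\sigma)$ is invertible in $\Cor(A)$, so $\Hom(\alpha(\sigma),\alpha(\sigma))\cong\Hom(\alpha(\mathbbm{1}),\alpha(\mathbbm{1}))=\bC$. The same isometry argument makes $T$ unitary, contradicting \ref{item_lem_outer_tensorcat_inv}.

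The main obstacle is justifying Frobenius reciprocity (the existence of conjugates) inside $\Cor(A)$ for possibly non-unital $A$, since the correspondences need not be of finite type a priori. The approach is to transport the solutions of the conjugate equations $R,\overline R$ from $\cC$ via $\fu$. This gives bounded adjointable $A$-bilinear maps $\alpha(\mathbbm{1})\to\alpha(\overline\pi)\otimes_A\alpha(\pi)$, and the standard zig-zag identities then provide the bijection on Hom spaces.
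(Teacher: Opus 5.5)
Your proof is correct and is essentially the paper's argument. You reduce to irreducibles, obtain Frobenius reciprocity in $\Cor(A)$ from conjugate solutions transported through $\fu$, upgrade a non-zero $T\in\Hom_{\Cor(A)}(\alpha(\mathbbm{1}),\alpha(\sigma))$ to a unitary using $\Hom_{\Cor(A)}(\alpha(\mathbbm{1}),\alpha(\mathbbm{1}))=A'\cap\cM(A)=\bC$, and then exclude it either by (1) or by the extra summand of $\sigma\otimes\overline{\sigma}$ together with faithfulness; the remaining differences are cosmetic (you also apply reciprocity when $\pi=\varpi$, and for invertible $\sigma$ you compute $\Hom_{\Cor(A)}(\alpha(\sigma),\alpha(\sigma))$ by tensoring with $\alpha(\overline{\sigma})$ rather than via $A\cong\cK(\alpha(\sigma))$). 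One step needs stating: your inequality $\dim\Hom_{\Cor(A)}(\alpha(\sigma),\alpha(\sigma))\ge\dim\Hom_{\cC}(\sigma,\sigma\otimes\overline{\sigma}\otimes\sigma)$ tacitly uses $\alpha(\overline{\sigma})\cong\alpha(\mathbbm{1})$, which the paper's last step also uses without comment; it follows because reciprocity gives a non-zero map $\alpha(\overline{\sigma})\to\alpha(\mathbbm{1})$, which after scaling is unitary by $\Hom_{\Cor(A)}(\alpha(\mathbbm{1}),\alpha(\mathbbm{1}))=\bC$ and (2) applied to $\overline{\sigma}$.
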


\begin{proof}
	The only if direction is clear. We show the converse. 
	For $\pi\in\cC$, we write $\alpha_\pi\colon A\to\cL(\alpha(\pi))$ for the $*$-homomorphism witnessing the left $A$-module structure of $\alpha(\pi)$. 
	In order to show the outerness of $(\alpha,\fu)$, it suffices to check that 
	\begin{align}\label{eq_prf_lem_outer_tensorcat}
		&
		\dim_\bC\Hom_{\Cor(A)}(\alpha(\pi),\alpha(\varpi))=\delta_{\pi,\varpi} 
	\end{align}
	for all irreducible objects $\pi,\varpi\in\cC$ by the semisimplicity of $\cC$. 
	
	Firstly, we consider the case when $\pi=\varpi$. 
	When $\pi$ is non-invertible, \eqref{eq_prf_lem_outer_tensorcat} holds by \ref{item_lem_outer_tensorcat_irr}. When $\pi$ is invertible, then $\alpha(\pi)$ is an $(A,A)$-Morita equivalence by $\alpha(\pi)\otimes_A\alpha(\overline{\pi})\cong A\cong \alpha(\overline{\pi})\otimes_A\alpha(\pi)$. This means that $\alpha_\pi\colon A\to \cK(\alpha(\pi))$ is a $*$-isomorphism, which implies \eqref{eq_prf_lem_outer_tensorcat} by $\alpha_\pi(A)'\cap \cM(\cK(\alpha(\pi))) \cong A'\cap\cM(A)\cong\bC$. 
	
	From the case of $\pi=\varpi$, it is not hard to observe that for any non-zero $x\in\Hom_{\Cor(A)}(\alpha(\mathbbm{1}),\alpha(\tau))$ with $\tau\in\cC$ irreducible, $\|x\|^{-1}x$ is a unitary. 
	
	Next, we consider the case when $\pi\neq\varpi$. Note that the solution of the conjugate equation for each $\tau\in\cC$ induces a solution of the conjugate equation for $\alpha(\tau)\in\Cor(A)$. Thus, we may apply Frobenius reciprocity to get 
	\begin{align*}
		&
		\dim_\bC\Hom_{\Cor(A)}(\alpha(\pi),\alpha(\varpi)) = \dim_\bC\Hom_{\Cor(A)}(\alpha(\mathbbm{1}),\alpha(\overline{\pi})\otimes_A\alpha(\varpi)) 
		\\={}& \dim_\bC\Hom_{\Cor(A)}(\alpha(\mathbbm{1}),\alpha(\overline{\pi}\otimes\varpi)) . 
	\end{align*}
	Suppose that \eqref{eq_prf_lem_outer_tensorcat} does not hold. 
	Then, there must be some irreducible direct summand $\tau$ of $\overline{\pi}\otimes\varpi \in\cC$ such that $\Hom_{\Cor(A)}(\alpha(\mathbbm{1}),\alpha(\tau))\neq 0$. 
	There is a unitary in $\Hom_{\Cor(A)}(\alpha(\mathbbm{1}),\alpha(\tau))$ by the observation above, and $\tau\in\cC$ is non-invertible by \ref{item_lem_outer_tensorcat_inv}. 
	Since $\tau$ is non-invertible, there is $0\neq\sigma\in\cC$ such that $\mathbbm{1}\oplus\sigma\cong \tau\otimes\overline{\tau}$. 
	Since $\alpha$ is always faithful by $A\neq 0$ (as $A'\cap\cM(A)\cong \bC\neq 0$), we have $\alpha(\sigma)\neq 0$, and thus there is an isometry in $\Hom_{\Cor(A)}(\alpha(\mathbbm{1}),\alpha(\tau\otimes\overline{\tau}))$ that is not a unitary, which contradicts 
	$\alpha(\tau\otimes\overline{\tau})\cong \alpha(\tau)\otimes_A\alpha(\overline{\tau}) \cong \alpha(\mathbbm{1})\otimes_A\alpha(\mathbbm{1})\cong \alpha(\mathbbm{1})$ since $\Hom_{\Cor(A)}(\alpha(\mathbbm{1}),\alpha(\mathbbm{1}))\cong\bC$. 
	Thus, \eqref{eq_prf_lem_outer_tensorcat} holds in this case, too. 
\end{proof}

\begin{df}\label{def_outer_DQG}
	Let $G$ be a compact quantum group and $(A,\gamma)$ be a left $\Gamma$-C*-algebra or a left $\Gamma$-W*-algebra. Suppose that $A'\cap\cM(A)\cong\bC$. 
	Then, we say that $\gamma$ is \emph{outer} if for all $\pi\in\Irr G$,  
	\begin{enumerate}[leftmargin=*,label=(\arabic*)]
		\item\label{item_def_outer_DQG_inv}
		there is no unitary $U\in \cU\cM(A)$ such that $(\Pi_\pi\otimes\id_A)\gamma = \Ad U\colon A\to \cB(\cH_\pi)\otimes A\cong A$ when $\pi$ is invertible, and 
		\item\label{item_def_outer_DQG_irr}
		$(\Pi_\pi\otimes\id_A)\gamma(A)'\cap \cB(\cH_\pi)\otimes \cM(A)\cong \bC$ when $\pi$ is non-invertible. 
	\end{enumerate}
\end{df}

\begin{rem}\label{rem_outer_tensorcat}
	Let $G$ be a compact quantum group and $(A,\gamma)$ be a left $\Gamma$-C*-algebra with $A'\cap\cM(A)\cong\bC$. 
	Consider the corresponding $\Rep G$-action $(\alpha,\fu)\colon\Rep G\to\Cor(A)$ as in \autoref{rem_outer_tensorcat}. 
	Then, $\gamma$ is outer in the sense of \autoref{def_outer_DQG} if and only if $(\alpha,\fu)$ is outer in the sense of \autoref{lem_outer_tensorcat}. 
	
	When $A$ is a von Neumann algebra, it would be more common to treat symmetries of unitary tensor categories using, rather than $(A,A)$-correspondences, their completions to Hilbert spaces. 
	For a fixed faithful normal state $f$ on $A$, we write $J_f\colon L^2(A,f)\to L^2(A,f)$ for the modular conjugation associated with $f$. 
	For $\pi\in\Rep G$, we equip $\cH_\pi\otimes L^2(A,f)$ with the left $A$-action by $\gamma_\pi := (\Pi_\pi\otimes\id_A)\gamma$ and the right $A$-action by $\id_{\cH_\pi}\otimes J_f(-)^*J_f$. 
	Then, for $\pi,\varpi\in\Rep G$, the set of right $A$-linear operators in $\cB(\cH_\pi\otimes L^2(A,f),\cH_\varpi\otimes L^2(A,f))$ is the same as $\cB(\cH_\pi,\cH_\varpi)\otimes A \cong \cL(\cH_\pi\otimes A, \cH_\varpi\otimes A)$ since $(J_f A J_f)'=A''=A$. Thus, $\gamma$ is outer if and only if the dimension of the set of $(A,A)$-bilinear operators in $\cB(\cH_\pi\otimes L^2(A,f),\cH_\varpi\otimes L^2(A,f))$ equals $\delta_{\pi,\varpi}$ for all $\pi,\varpi\in\Irr G$. 
\end{rem}

In \autoref{ssec_minimal}, we also consider actions of compact quantum groups. 
\begin{df}\label{def_action_LCQG}
	Let $G$ be a locally compact quantum group, $A$ be a C*-algebra, and $M$ be a von Neumann algebra. 
	\begin{enumerate}[leftmargin=*,label=(\arabic*)]
		\item\label{item_def_action_LCQG_C*}
		A non-degenerate injective $*$-homomorphism $\alpha\colon A\to \cM (C_0(G)\otimes A)$ is a \emph{continuous left $G$-action} if we have 
		$(\id_{C_0(G)}\otimes\alpha)\alpha=(\Delta_G\otimes\id_A)\alpha$ 
		and 
		$\cspan (C_0(G)\otimes1_{\cM(A)})\alpha(A) = C_0(G)\otimes A$. 
		\item\label{item_def_action_LCQG_W*}
		A unital normal injective $*$-homomorphism $\alpha\colon M\to L^\infty(G)\barotimes M$ is a \emph{(W*-algebraic) left $G$-action} if we have $(\id_{L^\infty(G)}\otimes\alpha)\alpha = (\Delta_G\otimes\id_{M})\alpha$. 
		\item\label{item_def_action_LCQG_right}
		We say that $\alpha\colon A\to \cM (A\otimes C_0(G))$ is a \emph{continuous right $G$-action} if $a\mapsto \alpha(a)_{21}$ is a continuous left $G^{\op}$-action. 
		Also, $\alpha\colon M\to M\barotimes L^\infty(G)$ is a \emph{(W*-algebraic) right $G$-action} if $a\mapsto \alpha(a)_{21}$ is a W*-algebraic left $G^{\op}$-action. 
	\end{enumerate}
\end{df}

Consider a discrete quantum group $\Gamma$, a von Neumann algebra $M$, and a C*-algebra $A$. 
Then, the definitions of a W*-algebraic left $\Gamma$-action on $M$ in \autoref{def_action_DQG} \ref{item_def_action_DQG_W*} and in \autoref{def_action_LCQG} \ref{item_def_action_LCQG_C*} coincide. 
Also, a left $\Gamma$-action on $A$ in the sense of \autoref{def_action_LCQG} \ref{item_def_action_LCQG_C*} is equivalent to a continuous left $\Gamma$-action on $A$ in the sense of \autoref{def_action_DQG} \ref{item_def_action_DQG_C*} by \autoref{lem_coame_conti_action}. 

\begin{prop}\label{lem_coame_conti_action}
	Let $G$ be a coamenable locally compact quantum group and $(A,\alpha)$ be a pair of a C*-algebra and $\alpha\colon A\to \cM(C_0(G)\otimes A)$ be a $*$-homomorphism 
	such that for all $x\in C_0(G)$ and $a\in A$ we have $(x\otimes1_{\cM(A)})\alpha(a)\in C_0(G)\otimes A$ and 
	\begin{align*}
		&
		(\id_{C_0(G)}\otimes\alpha)((x\otimes1_{\cM(A)})\alpha(a)) = (x\otimes 1_{\cM(C_0(G)\otimes A)}) (\Delta_G\otimes\id_A)\alpha(a) . 
	\end{align*}
	Then, $\alpha$ is injective if and only if $\cspan(\cB(L^2(G))_*\otimes\id_A)\alpha(A)=A$. 
	
	If moreover $G$ is \emph{regular} in the sense that 
	\begin{align*}
		&
		\cspan \{ (\id_{\cB(L^2(G))}\otimes \omega)(\Sigma V^G) \mid \omega\in\cB(L^2(G))_* \} = \cK(L^2(G)) , 
	\end{align*}
	where $\Sigma$ denotes the unitary $L^2(G)^{\otimes 2}\ni \xi\otimes\eta\mapsto \eta\otimes\xi\in L^2(G)^{\otimes 2}$, then $\alpha$ is injective if and only if $\alpha$ is a continuous left $G$-action on $A$. 
\end{prop}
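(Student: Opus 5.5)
The plan is to exploit coamenability through the counit. Since $G$ is coamenable, $\epsilon_G$ factors through $\rho_G$, so there is a character $\epsilon\colon C_0(G)\to\bC$ with $(\epsilon\otimes\id)\Delta_G=\id=(\id\otimes\epsilon)\Delta_G$. Here these identities are understood on $C_0(G)$, extended strictly to $\cM(C_0(G)\otimes C_0(G))$. By hypothesis, $(y\otimes1)\alpha(a)\in C_0(G)\otimes A$ for $y\in C_0(G)$ and $a\in A$. Hence the element
\begin{align*}
	a_y:=(\epsilon\otimes\id_A)\bigl((y\otimes 1)\alpha(a)\bigr)\in A
\end{align*}
makes sense. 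We write $A_0:=\cspan(\cB(L^2(G))_*\otimes\id_A)\alpha(A)$, understood via the strict extension of slice maps. Note that slices of the form $(\omega\otimes\id)((x\otimes1)\alpha(b))$ with $\omega$ normal and $x\in C_0(G)$ also lie in $A_0$, since $\omega(x\,\cdot\,)$ is again normal.

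\emph{First key step: $a_y\in A_0$ for every $a\in A$.} The map $\varphi\mapsto(\varphi\otimes\id)(c)$ is weak$^*$-to-norm continuous on bounded subsets of $C_0(G)^*$ for each $c\in C_0(G)\otimes A$. This holds for finite sums of elementary tensors and passes to norm limits uniformly on bounded sets. By Goldstine, restrictions of normal functionals on $\cB(L^2(G))$ are weak$^*$ dense in bounded balls of $C_0(G)^*$. Approximating $\epsilon$ in this way therefore shows $a_y\in A_0$.

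\emph{Second key step: two identities from the coaction hypothesis.} Combining the assumed identity $(\id\otimes\alpha)((x\otimes1)\alpha(b))=(x\otimes1)(\Delta_G\otimes\id)\alpha(b)$ with the counit identities gives two equations.
\begin{itemize}
	\item[(i)] Slice in the first leg with $\omega$ and apply $\epsilon$ in the middle leg, using $(\id\otimes\epsilon)\Delta_G=\id$. This gives $(\epsilon\otimes\id)((y\otimes1)\alpha(a))=\epsilon(y)a$ whenever $a=(\omega\otimes\id)((x\otimes1)\alpha(b))$. By linearity and continuity, the same holds for all $a\in A_0$.
	\item[(ii)] Apply $\epsilon$ to the first leg, using $(\epsilon\otimes\id)\Delta_G=\id$. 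This gives $(z\otimes1)\alpha(a_y)=\epsilon(y)(z\otimes1)\alpha(a)$ for all $z\in C_0(G)$, hence $\alpha(a_y)=\epsilon(y)\alpha(a)$ by non-degeneracy.
\end{itemize}
Now fix $y$ with $\epsilon(y)=1$. If $A_0=A$ and $\alpha(a)=0$, then (i) gives $a=a_y=0$, so $\alpha$ is injective. Conversely, if $\alpha$ is injective, then (ii) gives $a=a_y$, which lies in $A_0$ by the first step, so $A_0=A$. The delicate points are purely bookkeeping: the order of the legs in (i) and (ii), and checking that the strict extensions of $\epsilon$ are legitimate on the relevant multiplier elements.

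\emph{Regular case.} A continuous left $G$-action is injective by definition, so it remains to show that injectivity implies the density condition $\cspan(C_0(G)\otimes1)\alpha(A)=C_0(G)\otimes A$. By the first part, injectivity already gives $A_0=A$. Set $X:=\cspan(C_0(G)\otimes1)\alpha(A)$; the hypothesis gives $X\subset C_0(G)\otimes A$. I would follow the Baaj--Skandalis type argument.
\begin{itemize}
	\item[(a)] Write $(\Delta_G\otimes\id)\alpha(a)=V^G_{12}(\alpha(a))_{13}V^{G*}_{12}$.
	\item[(b)] Multiply on the left by $x\otimes y\otimes 1$ and slice the second leg with normal functionals. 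Using $A_0=A$, this produces $\cspan(C_0(G)\otimes1)\alpha(A)$ together with "twisted" terms involving $(\id\otimes\omega)(\Sigma V^G)$.
	\item[(c)] Regularity says these twisted terms span $\cK(L^2(G))$. One should then be able to untwist by conjugating with $V^G$, leading to $\cK(L^2(G))\otimes A\subset\cspan(\cK(L^2(G))\otimes1)X$.
	\item[(d)] Cut back down with $C_0(G)$ on the left to reach $C_0(G)\otimes A\subset X$.
\end{itemize}
I expect (b)--(c) to be the main obstacle: arranging the legs so that exactly the regularity span appears, rather than its adjoint or the variant with $\Sigma$ on the other side. I would first try the computation with $\omega(\,\cdot\,z)$-type functionals on the second leg, imitating Baaj--Skandalis' proof that coactions of regular multiplicative unitaries are continuous.
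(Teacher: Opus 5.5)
Your proof of the first equivalence is correct and is essentially the paper's argument. The paper also uses the counit on $C_0(G)\cong C_0^u(G)$. It proves your identity (ii) in the form $\alpha((\epsilon_G\otimes\id_A)\alpha(a))=\alpha(a)$, and it gets $a\in A_0$ by approximating an extension of $\epsilon_G$ by normal states. For the converse it shows $(\epsilon_G\otimes\id_A)\alpha(A)=A$ by exactly the slice computation behind your (i). One small omission: extending (i) from the elements $(\omega\otimes\id)((x\otimes1)\alpha(b))$ to all of $A_0$ needs that these elements span a dense subspace of $A_0$, not merely that they lie in $A_0$. This holds because $\cB(L^2(G))_*=\cspan\{\omega(x\,\cdot\,)\}$ by non-degeneracy.

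The regular case is a genuine gap. The paper does not prove it itself; it cites the computation in Baaj--Skandalis--Vaes, Proposition~5.8. In your sketch, steps (b)--(c) are only announced, not carried out. Step (d) is a non sequitur. Multiplying $\cK(L^2(G))\otimes A$ on the left by $C_0(G)\otimes 1$ just returns $\cK(L^2(G))\otimes A$, because $C_0(G)\cK(L^2(G))=\cK(L^2(G))$. So no ``cutting down'' can yield $C_0(G)\otimes A\subset X$. Indeed, the implication fails for general closed left $C_0(G)$-submodules $X\subset C_0(G)\otimes A$: for $G=\bR$, the submodule $X=C_0(\bR\setminus\{0\})\otimes A$ satisfies $\cspan(\cK\otimes 1)X=\cK\otimes A$. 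You must use the coaction identity a second time.

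Here is a route that works, writing $\cK=\cK(L^2(G))$, $V=V^G$, and $\omega_{\xi,\eta}=\langle\xi,(\cdot)\eta\rangle$.

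\emph{Step 1.} Take $k=\theta_{\zeta,\zeta'}$ and write $\alpha((\omega_{\xi,\eta}\otimes\id)\alpha(a))(k\otimes 1)=(\omega_{\xi,\eta}\otimes\id\otimes\id)(V_{12}\alpha(a)_{13}V_{12}^*(1\otimes k\otimes 1))$. Approximate $V^*(\eta\otimes\zeta)$ by sums of vectors $\mu\otimes\nu$, and note that $(\langle\xi|\otimes 1)V(1\otimes|\nu\rangle)=(\id\otimes\omega_{\xi,\nu})(\Sigma V)$. Regularity together with $A_0=A$ then gives $\cspan\,\alpha(A)(\cK\otimes 1)=\cspan(\cK\otimes1)\alpha(A)(\cK\otimes 1)$. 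This space is therefore a C*-subalgebra of $\cK\otimes A$ that is stable under $\cK\otimes1$ on both sides. Hence it equals $\cK\otimes B$ for some $B\subset A$, and $B=A$ because $A_0=A$.

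\emph{Step 2.} Apply $\id\otimes\alpha$ to $\cK\otimes A=\cspan(\cK\otimes 1)\alpha(A)$ and multiply on the left by $1\otimes C_0(G)\otimes 1$. Use $(\id\otimes\alpha)\alpha(a)=V_{12}\alpha(a)_{13}V_{12}^*$, together with $\cspan(\cK\otimes C_0(G))V=\cK\otimes C_0(G)=\cspan V(\cK\otimes C_0(G))$, which holds since $V\in\cU\cM(C_0(\wc{G})\otimes C_0(G))$. This gives $\cK\otimes X=\cK\otimes C_0(G)\otimes A$, and hence $X=C_0(G)\otimes A$.

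Alternatively, you can cite BSV for this direction, as the paper does.
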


A locally compact quantum group is coamenable if and only if the counit $\epsilon_G\colon C^u_0(G)\to \bC$ factors through the canonical quotient $\rho_G\colon C^u_0(G)\to C(G)$ (see \autoref{ssec_prelim_Rep}). 
Discrete quantum groups are always coamenable and regular.

\begin{proof}
	For $\alpha$ as in the assumption, 
	$\alpha$ is non-degenerate and satisfies 
	$\cspan (\cB(L^2(G))_*\otimes\id_A)\alpha(A) = A$ if 
	$\cspan (C_0(G)\otimes 1_{\cM(A)})\alpha(A) = C_0(G)\otimes A$. 
	If $G$ is regular, the converse also holds by the computation in the proof of \cite[Proposition~5.8]{Baaj-Skandalis-Vaes2003non-semi-regular}. 
	Thus, we only have to show the former statement. 
	
	By coamenability, we shall identify $C_0(G)$ with $C^u_0(G)$ via $\rho_G$. 
	Note that $\epsilon_G\colon C_0(G)\to\bC$ is surjective and in particular non-degenerate. 
	Since 
	\begin{align*}
		&
		(\epsilon_G\otimes\id_A)\alpha(A)=(\epsilon_G\otimes\id_A)((C_0(G)\otimes1_{\cM(A)}) \alpha(A))\subset A , 
	\end{align*}
	we have that for all $x\in C_0(G)$ and $a\in A$, 
	\begin{align*}
		&
		\begin{aligned}
			&
			\epsilon_G(x)\alpha\bigl( (\epsilon_G\otimes\id_A)\alpha(a) \bigr) 
			= 
			\alpha\bigl( \epsilon_G(x) (\epsilon_G\otimes\id_A)\alpha(a) \bigr) 
			\\={}& 
			\alpha(\epsilon_G\otimes\id_A) \bigl( (x\otimes1_{\cM(A)})\alpha(a) \bigr) 
			= 
			(\epsilon_G\otimes\alpha) \bigl( (x\otimes1_{\cM(A)})\alpha(a) \bigr) 
			\\={}& 
			\epsilon_G(x)(\epsilon_G\otimes\id_{C_0(G)\otimes A}) (\Delta_G\otimes\id_A)\alpha(a) 
			= 
			\epsilon_G(x)\alpha(a) . 
		\end{aligned}
	\end{align*}
	By letting $x\in C_0(G)$ such that $\epsilon_G(x)\neq 0$, we see that for all $a\in A$, 
	\begin{align}\label{eq_lem_coame_conti_action}
		&
		\alpha\bigl( (\epsilon_G\otimes\id_A)\alpha(a) \bigr) = \alpha(a) . 
	\end{align}
	
	Suppose $\alpha$ is injective. 
	Then, it follows from \eqref{eq_lem_coame_conti_action} that $(\epsilon_G\otimes\id_A)\alpha=\id_A$. 
	We take $e\in C_0(G)$ such that $\epsilon_G(e)=1$ and $0\leq e\leq 1_{\cM(C_0(G))}$ and 
	extend $\epsilon_G\colon C_0(G)\to \bC$ to a (possibly non-normal) state $\wt{\epsilon}\in \cB(L^2(G))^*$. 
	We fix $a\in A$ and $\varepsilon\in (0,1]$ arbitrarily. 
	There are $n\in\bZ_{\geq 1}$, $x_1,\cdots,x_n\in C_0(G)$, and $a_1,\cdots,a_n\in A$ such that $\|(e\otimes 1_{\cM(A)})\alpha(a)-\sum\limits_{k=1}^{n}x_k\otimes a_k\|<\varepsilon$. 
	Also, there is a normal state $\omega\in \cB(L^2(G))_*$ such that $\|\omega(x_k) - \wt{\epsilon}(x_k)\|\leq n^{-1}\varepsilon(1+\|a_k\|)^{-1}$. 
	We estimate 
	\begin{align*}
		&
		\| a - (\omega\otimes\id_A)((e\otimes 1_{\cM(A)})\alpha(a)) \|
		\\\leq{}& 
		\| a - (\wt{\epsilon}\otimes\id_A)((e\otimes 1_{\cM(A)})\alpha(a)) \|
		+
		\biggl\| (\wt{\epsilon}\otimes\id_A)\biggl( (e\otimes 1_{\cM(A)})\alpha(a) - \sum\limits_{k=1}^{n}x_k\otimes a_k \biggr) \biggr\| 
		\\+{}&
		\biggl\| ((\wt{\epsilon}-\omega)\otimes\id_A)\biggl( \sum\limits_{k=1}^{n}x_k\otimes a_k \biggr) \biggr\| 
		+
		\biggl\| (\omega\otimes\id_A)\biggl( (e\otimes 1_{\cM(A)})\alpha(a) - \sum\limits_{k=1}^{n}x_k\otimes a_k \biggr) \biggr\| 
		\\\leq{}&
		\| a - (\epsilon_G\otimes\id_A)((e\otimes 1_{\cM(A)})\alpha(a)) \|
		+
		\varepsilon\| \wt{\epsilon}\otimes\id_A \| 
		+
		\biggl( \sum\limits_{k=1}^{n}n^{-1}\varepsilon \biggr) 
		+
		\varepsilon\| \omega\otimes\id_A \|
		\\\leq{}&
		3\varepsilon . 
	\end{align*}
	Since $\omega(e(-))\in\cB(L^2(G))_*$, we see $\cspan (\cB(L^2(G))_*\otimes\id_A)\alpha(A) =A$. 
	
	Conversely, suppose $\cspan (\cB(L^2(G))_*\otimes\id_A)\alpha(A) =A$. 
	Since 
	\begin{align*}
		&
		\cB(L^2(G))_* = \cspan\{ \omega(x(-)) \mid \omega\in\cB(L^2(G))_*, x\in C_0(G) \} 
	\end{align*}
	by the non-degeneracy of $C_0(G) \subset \cB(L^2(G))$, we see 
	\begin{align*}
		&
		(\epsilon_G\otimes\id_A)\alpha(A) 
		= 
		\cspan (\epsilon_G\otimes\id_A) \alpha\bigl( (\cB(L^2(G))_*\otimes\id_A) \alpha(A) \bigr) 
		\\={}&
		\cspan (\cB(L^2(G))_*\otimes\epsilon_G\otimes\id_A) (\id_{C_0(G)}\otimes\alpha) \bigl( (C_0(G)\otimes 1_{\cM(A)}) \alpha(A) \bigr) 
		\\={}&
		\cspan (\cB(L^2(G))_*\otimes\epsilon_G\otimes\id_A) \bigl( (C_0(G)\otimes 1_{\cM(C_0(G)\otimes A)}) (\Delta_G\otimes\id_A) \alpha(A) \bigr) 
		\\={}& 
		\cspan (\cB(L^2(G))_*\otimes\id_A) \bigl( (C_0(G)\otimes 1_{\cM(A)}) \alpha(A) \bigr) 
		= 
		A , 
	\end{align*}
	where we used $(\id_{C_0(G)}\otimes\epsilon_G)\Delta_G=\id_{C_0(G)}$ in the second last equality. 
	Thus, for any $c\in \Ker\alpha$, there is $a\in A$ such that $(\epsilon_G\otimes\id_A)\alpha(a)=c$. 
	From \eqref{eq_lem_coame_conti_action}, we see $0=\alpha(c)=\alpha(a)$. 
	Therefore $0=(\epsilon_G\otimes\id_A)\alpha(a)=c$, which implies the injectivity of $\alpha$. 
\end{proof}

Let $G$ be a locally compact quantum group and $(A,\alpha),(B,\beta)$ be left $G$-C*-algebras. 
A \emph{$G$-equivariant C*-subalgebra} of $(A,\alpha)$ is a C*-subalgebra $D\subset A$ such that $\alpha|_D$ is a well-defined continuous left $G$-action on $D$. 
Then, a $*$-homomorphism $f\colon A\to B$ is a \emph{$G$-$*$-homomorphism} if $(x\otimes 1_{\cM(A)}) \beta f(a) = (\id_{C_0(G)}\otimes f) ( (x\otimes 1_{\cM(A)}) \alpha(a) )$ for all $a\in A$ and $x\in C_0(G)$. When $G=\wc{F}$ for a compact quantum group $F$, this condition is equivalent to $\beta_\pi f(a) = (\id_{\cB(\cH_\pi)}\otimes f)\alpha_{\pi} (a)$ for all $a\in A$ and $\pi\in\Irr G$ by using the convention in \autoref{rem_action_tensorcat}.

\begin{df}[{\cite[Definition 5.5, Definition 6.1]{Vaes2001unitary}}]\label{def_action_minimal}
	Let $G$ be a locally compact quantum group and $(M,\alpha)$ be a left $G$-W*-algebra. We write $M^\alpha:=\{ x\in M \mid \alpha(x)=1\otimes x \}$. 
	We say that $\alpha$ is \emph{minimal} if $M^{\alpha \prime}\cap M\cong\bC$ and $L^\infty(G)$ is generated as a von Neumann subalgebra by $(\id_{L^\infty(G)}\otimes M_*)\alpha(M) \subset L^\infty(G)$. 
\end{df}

	For a compact quantum group $G$, a left $G$-action $\alpha$ on a factor $M$ is \emph{strictly outer} in the sense that $\alpha(M)'\cap (G\barltimes M) \cong\bC$ if 
	it is minimal by \cite[Proposition 6.2]{Vaes2001unitary}. 
	Suppose that there is a minimal action of a compact quantum group $G$ on a factor $M$ of type $\mathrm{I}$ or $\mathrm{II}$. 
	Then, there is a minimal action of $G$ on a factor of type $\mathrm{II}$ by the tensor product of $M$ and some factor of type $\mathrm{II}$. 
	Then, it is shown by \cite[Theorem 3.5 (c), Corollary 3.7]{Vaes2005strictly} that $G$ must be of Kac type. In particular, $G$ cannot be $\SU_{\pm q}(2)$ or $\SO_{q}(3)$ for any $0<q<1$.


\end{document}